\newtheorem{theorem}{Theorem} [section]
\newtheorem{lemma}[theorem]{Lemma}
\newtheorem{proposition}[theorem]{Proposition}
\newtheorem{remark}[theorem]{Remark} 
\newtheorem{definition}{Definition}
\newtheorem{corollary}[theorem]{Corollary}
\DeclareMathOperator*{\supp}{supp}
\newcommand{\noi}{\noindent}
\newcommand{\N}{\mathbb{N}}
\newcommand{\M}{\mathcal{M}}
\newcommand{\R}{\mathbb{R}}
\newcommand{\C}{\mathbb{C}}
\newcommand{\D}{\mathbb{D}}
\newcommand{\eps}{\varepsilon}
\newcommand{\ld}{\lambda}
\newcommand{\ft}{\widehat}
\newcommand{\cj}{\overline}
\newcommand{\dx}{\partial_x}
\newcommand{\dt}{\partial_t}
\newcommand{\LRA}{\Longrightarrow}
\newcommand{\jb}[1]
{\langle #1 \rangle}
\numberwithin{equation}{section}
\numberwithin{theorem}{section}
\begin{document}

\title
[Traveling Waves for the Cubic Szeg\"{o} Equation]
{\bf Traveling Waves for the Cubic Szeg\"{o} Equation on the Real Line}

\author{Oana Pocovnicu}

\address{Oana Pocovnicu\\
Laboratoire de Math\'ematiques d'Orsay\\
Universit\'e Paris-Sud (XI)\\
91405, Orsay Cedex, France}

\email{oana.pocovnicu@math.u-psud.fr}

\subjclass[2000]{ 35B15, 37K10, 47B35}

\keywords{Szeg\"o equation; integrable Hamiltonian systems; Lax pair; Hankel operators}

\begin{abstract}
We consider the cubic Szeg\"{o} equation
\begin{equation*}
i\dt u=\Pi(|u|^{2}u)
\end{equation*}

\noi
in the Hardy space $L^2_+(\R)$ on the upper half-plane,
where $\Pi$ is the Szeg\"o projector.
It was first introduced by G\'erard and Grellier in \cite{PGSG}
as a toy model for totally non-dispersive evolution equations.
We show that the only traveling waves are of the form $\frac{C}{x-p}$,
$p \in \C$ with $\text{Im} p<0$.
Moreover, they are shown to be orbitally stable,
in contrast to the situation on the unit disk
where some traveling waves were shown to be unstable.
\end{abstract}

\date{\today}
\maketitle


\section{Introduction}

One of the most important properties
in the study of the nonlinear Schr\"odinger equations (NLS) is {\it dispersion}.
It is often exhibited in the form of the Strichartz estimates
of the corresponding linear flow.
In case of the cubic NLS:
\begin{equation}\label{eqn: Schrodinger}
i\partial_t u+\Delta u=|u|^2u, \quad (t,x)\in\R\times M,
\end{equation}

\noi
Burq, G\'erard, and Tzvetkov \cite{BGT} observed
that the dispersive properties are strongly influenced
by the geometry of the underlying manifold $M$.
Taking this idea further, G\'erard and Grellier \cite{PGSGX} remarked that
dispersion  disappears completely
when $M$ is a sub-Riemannian manifold
(for example, the Heisenberg group).
In this situation, many of the classical arguments
used in the study of NLS no longer hold.
As a consequence, even the problem of global well-posedness of \eqref{eqn: Schrodinger}
on a sub-Riemannian manifold still remains open.

In \cite{PGSG, PGSGX},
G\'erard and Grellier introduced a model of a non-dispersive Hamiltonian equation
called {\it the cubic Sz\"ego equation.} (See \eqref{eq:szego} below.)
The study of this equation is the first step
toward understanding existence and other properties of smooth solutions of NLS in the absence of dispersion.
Remarkably, the Szeg\"o equation turned out to be completely integrable
in the following sense.
It possesses a Lax pair structure
and an infinite sequence of conservation laws.
Moreover, the dynamics can be approximated
by a sequence of finite dimensional completely integrable Hamiltonian systems.
To illustrate the degeneracy of this completely integrable structure,
several instability phenomena were established in \cite{PGSG}.

The Sz\"ego equation was studied in \cite{PGSG, PGSGX} on the circle $\mathbb{S}^1$. More precisely,
solutions were considered to belong at all time to the Hardy space $L^2_+(\mathbb{S}^1)$ on the unit disk $\mathbb{D} = \{|z| < 1\}$.
This is the space of $L^2$-functions on $\mathbb{S}^1$
with $\hat{f}(k)=0$ for all $k<0$.
These functions can be extended as
holomorphic functions on the unit disk.
Several properties of the Hardy space on the unit disk
naturally transfer
to the Hardy space $L^2_+(\R)$ on the upper half-plane $\C_+ = \{ z; \text{Im} z > 0\}$,
defined by
\[L^2_+(\R)=\big\{f \text{ holomorphic on } \C_+;
\|g\|_{L^2_+(\R)}:=\sup_{y>0}\bigg(\int_{\R}|g(x+iy)|^{2}dx\bigg)^{1/2}<\infty  \big\}.\]

\noi
In view of the Paley-Wiener theorem, we identify
this space of holomorphic functions on $\C_+$
with the space of its boundary values:
\[L^2_+(\R)=\{f\in L^2(\R); \, \supp{\hat{f}}\subset [0,\infty)\}.\]

\noi
The transfer from $L^2_+(\mathbb{S}^1)$ to $L^2_+(\R)$
is made by the usual conformal transformation
$\omega:\D\to\C_+$ given by
\begin{equation*}
\omega(z)=i\frac{1+z}{1-z}.
\end{equation*}

\noi
However, the image of a solution of the Sz\"ego equation on $\mathbb{S}^1$
under the conformal transformation is no longer a solution of the Sz\"ego equation on $\R$.
Therefore, we directly study the Sz\"ego equation on $\R$ in the following.

Endowing $L^2(\R)$ with the usual scalar product $(u,v)=\int_{\R}u\bar{v}$, we define the Szeg\"{o} projector
$\Pi:L^{2}(\R)\to L^2_+(\R)$ to be the projector onto the non-negative frequencies,
\[\Pi (f)(x)=\frac{1}{2\pi}\int_{0}^{\infty}e^{ix\xi}\hat{f}(\xi)d\xi.\]

\noi
For $u \in L^2_+(\R)$,
we consider \textit{the Sz\"ego equation on the real line}:
\begin{equation}\label{eq:szego}
i\dt u=\Pi(|u|^{2}u), \quad x \in \R.
\end{equation}

\noi
This is a Hamiltonian evolution associated to the Hamiltonian
\begin{equation*}
E(u)=\int_{\R}|u|^4 dx
\end{equation*}

\noi
defined on $L^4_+(\R)$.
From this structure, we obtain the formal conservation law $E(u(t))=E(u(0))$.
The invariance under translations and under modulations provides two more conservation laws,
$Q(u(t))=Q(u(0))$ and $M(u(t))=M(u(0))$, where
\begin{equation*}
Q(u)=\int_{\R}|u|^2 dx\quad \text{and} \quad M(u)=\int_{\R}\bar{u}Du\,dx,\
\text{ with } D = -i \dx.
\end{equation*}

\noi
Now, we define the Sobolev spaces $H^s_+(\R)$ for $s\geq 0$:
\begin{align*}
H^s_+(\R)=&\big\{h\in L^2_+(\R);  \|h\|_{H^s_+}:=\bigg{(}\frac{1}{2\pi}\int_0^{\infty}(1+|\xi|^2)^{s}|\hat{h}(\xi)|^2d\xi\bigg{)}^{1/2}<\infty\big\}.
\end{align*}
Similarly, we define the homogeneous Sobolev norm for $h\in \dot{H}^s_+$ by
\begin{align*}
||h\|_{\dot{H}^s_+}:=\bigg{(}\frac{1}{2\pi}\int_0^{\infty}|\xi|^{2s}|\hat{h}(\xi)|^2\bigg{)}^{1/2}<\infty.
\end{align*}

Slight modifications of the proof of the corresponding result in \cite{PGSG} lead to the following well-posedness result:

\begin{theorem}
The cubic Szeg\"o equation \eqref{eq:szego} is globally well-posed in  $H_+^{s}(\R)$ for $s \geq \frac{1}{2}$,
i.e. given $u_0\in H_+^{1/2} $,
there exists a unique global-in-time solution $u\in C(\R; H_+^{1/2})$ of \eqref{eq:szego}
with initial condition $u_{0}$. Moreover, if $u_0\in H_+^{s}$ for some $s>\frac{1}{2}$,
then $u\in C(\R;H_+^{s})$.
\end{theorem}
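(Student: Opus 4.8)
The plan is to treat \eqref{eq:szego} as an ordinary differential equation in the Banach space $H^s_+(\R)$, since the right-hand side carries no derivatives and there is no linear propagator to exploit (this is precisely the non-dispersive feature of the model). First I would establish local well-posedness for $s>\frac12$ by a Picard--Lindel\"of/contraction argument. The two facts that make this work are that $\Pi$ is bounded on $H^s_+$ (being a Fourier multiplier) and that $H^s(\R)$ is a Banach algebra for $s>\frac12$. Together these give the cubic Lipschitz bound $\|\Pi(|u|^2u)-\Pi(|v|^2v)\|_{H^s}\le C(\|u\|_{H^s}^2+\|v\|_{H^s}^2)\|u-v\|_{H^s}$, so $u\mapsto\Pi(|u|^2u)$ is locally Lipschitz on $H^s_+$. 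A standard fixed point for the integral formulation $u(t)=u_0-i\int_0^t\Pi(|u|^2u)\,ds$ then yields a unique maximal solution with lifespan $T\gtrsim\|u_0\|_{H^s}^{-2}$, together with the blow-up alternative and continuous dependence on $u_0$.

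Next I would promote local to global well-posedness for $s>\frac12$ using the conservation laws. In Fourier variables one has $Q(u)=\|u\|_{L^2}^2$ and $M(u)=\frac{1}{2\pi}\int_0^\infty\xi|\hat u(\xi)|^2\,d\xi=\|u\|_{\dot H^{1/2}_+}^2$, so that $Q(u)+M(u)\simeq\|u\|_{H^{1/2}_+}^2$; since both $Q$ and $M$ are conserved, the $H^{1/2}_+$-norm stays bounded for all time. This controls the critical norm but not $\|u(t)\|_{H^s}$ directly, so by the blow-up alternative I still must rule out finite-time growth of the higher norm. For this I would run an energy estimate of the form $\frac{d}{dt}\|u\|_{H^s}^2\lesssim\|u\|_{L^\infty}^2\|u\|_{H^s}^2$ (via Kato--Ponce type product estimates and the boundedness of $\Pi$) and insert the logarithmic Brezis--Gallouet inequality $\|u\|_{L^\infty}^2\lesssim\|u\|_{H^{1/2}}^2\log\big(2+\|u\|_{H^s}^2/\|u\|_{H^{1/2}}^2\big)$. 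With $\|u\|_{H^{1/2}}$ already bounded, Gronwall applied to the resulting logarithmic differential inequality gives at most double-exponential growth of $\|u(t)\|_{H^s}$, which is finite on every bounded interval; hence the solution is global.

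Finally, the endpoint $s=\frac12$ I would obtain by approximation. Given $u_0\in H^{1/2}_+$, choose smooth data $u_0^{(n)}\to u_0$ in $H^{1/2}_+$ (for instance by frequency truncation), solve globally at higher regularity, and exploit the uniform-in-$n$ bound on the conserved $H^{1/2}$-norm. Uniqueness and the Cauchy property at the critical level I would establish through a lower-order energy estimate: controlling $\|u-v\|$ in a weaker norm such as $L^2$ by means of the $H^{1/2}$ bounds on the two solutions, one closes a Gronwall estimate and passes to the limit to produce a solution in $C(\R;H^{1/2}_+)$ depending continuously on $u_0$.

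The main obstacle, I expect, is precisely this critical endpoint $s=\frac12$: the algebra property and the naive contraction both fail there, so a direct fixed point is unavailable, and the argument hinges on the logarithmic inequality together with the limiting/continuity argument. This is the same delicate point as in the circle case of \cite{PGSG}, of which the present statement is the announced \textquotedblleft slight modification,\textquotedblright\ and I would keep the adaptation as close to that proof as the change of underlying space permits.
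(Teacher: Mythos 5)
Your treatment of the range $s>\frac12$ is correct and is exactly the argument the paper is pointing to: the paper itself gives no proof of this theorem (it only invokes ``slight modifications'' of the proof in \cite{PGSG}), and that proof proceeds precisely as you describe --- contraction in $H^s_+$ using the algebra property and the boundedness of $\Pi$, conservation of $Q$ and $M$ (which together are equivalent to the square of the $H^{1/2}_+$-norm since $\ft{u}$ is supported in $[0,\infty)$), then a tame energy estimate combined with the Brezis--Gallou\"et inequality and a logarithmic Gronwall argument giving a double-exponential bound on $\|u(t)\|_{H^s}$. The genuine gap is at the endpoint $s=\frac12$, which you correctly single out as the main obstacle but then resolve by a mechanism that does not work: you claim the Cauchy property and uniqueness follow from an $L^2$ Gronwall estimate for the difference of two solutions ``by means of the $H^{1/2}$ bounds on the two solutions.'' For two solutions $u$, $v=u+w$ of \eqref{eq:szego} one computes
\begin{equation*}
\frac{1}{2}\frac{d}{dt}\|w\|_{L^2}^2=\text{Im}\int_{\R} u^2\overline{w}^2\,dx+\text{Im}\int_{\R} u|w|^2\overline{w}\,dx,
\end{equation*}
and to close a Gronwall inequality one needs these terms bounded by a constant times $\|w\|_{L^2}^2$, which requires $L^\infty$ control of $u$ (and of $w$); but $H^{1/2}(\R)\not\hookrightarrow L^\infty(\R)$. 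Using what the $H^{1/2}$ bound actually gives (the embedding into $L^4$, or the sharp inequality \eqref{ineqn:Gagliardo-Nirenberg}), the best one obtains is $\frac{d}{dt}\|w\|_{L^2}^2\lesssim K^3\|w\|_{L^2}+K^{5/2}\|w\|_{L^2}^{3/2}$ with $K=\sup_t(\|u\|_{H^{1/2}}+\|v\|_{H^{1/2}})$, and a differential inequality $\dot y\lesssim y^{\alpha}$ with $\alpha<1$ does not propagate $y(0)=0$; it yields neither uniqueness nor a useful Cauchy estimate. The same failure occurs when comparing two smooth approximants, because their $L^\infty$ norms are not uniformly bounded.

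The actual endpoint argument (the content of the proof in \cite{PGSG} that must be adapted) uses the Brezis--Gallou\"et inequality a second time, on the approximating solutions themselves, balancing logarithmic growth against exponential decay. If $u_0^n$ is the truncation of $u_0$ to frequencies $\leq 2^n$, the double-exponential $H^s$ bound gives $\|u^n(t)\|_{L^\infty}^2\lesssim K^2(1+n)e^{CK^2|t|}$ (the $H^s$ norm enters only through its logarithm), while consecutive data satisfy $\|u_0^{n+1}-u_0^n\|_{L^2}\lesssim 2^{-n/2}\|u_0\|_{H^{1/2}}$. In the $L^2$ Gronwall estimate for $u^{n+1}-u^n$ --- whose coefficient is now legitimately $(\|u^n\|_{L^\infty}+\|u^{n+1}\|_{L^\infty})^2$ --- the factor $e^{Cn g(t)}$ produced by the logarithm is beaten by $2^{-n/2}$ provided $|t|\leq T_0$ with $T_0$ depending only on the conserved quantity $\|u_0\|_{H^{1/2}}$; telescoping gives convergence in $C([-T_0,T_0];L^2)$, interpolation with the uniform $H^{1/2}$ bound gives convergence in $H^\sigma$ for every $\sigma<\frac12$, strong $H^{1/2}$ continuity of the limit is recovered from weak convergence together with conservation of $Q+M$, and one iterates in time since $T_0$ depends only on a conserved norm. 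Note finally that even after this construction, uniqueness in the full class $C(\R;H^{1/2}_+)$ --- which the theorem asserts --- needs its own argument, since comparing an arbitrary $H^{1/2}$ solution with the constructed one runs into exactly the same $L^\infty$ obstruction; this point is absent from your sketch.
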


In this paper, we concentrate on the study of traveling waves. The two main goals are the
classification of traveling waves and their stability.
As a result, we show that the situation on the real line is essentially different from that on the circle.

A solution for the cubic Szeg\"{o} equation on the real line \eqref{eq:szego}
is called a {\it traveling  wave} if there exist $c,\omega\in\R$ such that
\begin{equation} \label{eq:travel}
u(t, z)=e^{-i\omega t}u_0(z-ct), \quad z\in \C_+\cup\R, t \in\R
\end{equation}

\noi
for some $u_0 \in H^{1/2}_+(\R)$.
Note that a solution to \eqref{eq:szego} in $H^{1/2}_+(\R)$
has a natural extension onto $\C_+$,
and we have used this viewpoint in \eqref{eq:travel}.
Substituting \eqref{eq:travel} into \eqref{eq:szego},
 we obtain that $u_0$ satisfies the following equation on $\R$:
\begin{equation}\label{eqn:u}
cDu_0+\omega u_0=\Pi(|u_0|^{2}u_0).
\end{equation}

\noi
In the following,
we use the simpler notation $u$ instead of $u_0$,
when we study time-independent problems.
From \eqref{eqn:u}, we see that traveling waves with nonzero velocity, $c\neq 0$, have good regularity.
Indeed, we prove that $u\in H_+^{s}(\R)$ for all $s\geq 0$ in Lemma \ref{lemma: smothness}.
In particular, by Sobolev embedding theorem, we have $u\in L_+^p(\R)$ for $2\leq p\leq\infty$.
On the other hand, equation \eqref{eqn:u} yields in Lemma \ref{lemma: stationary waves} that there exist no nontrivial stationary waves, i.e. traveling waves of velocity $c=0$, in $L^2_+$.

Now, we present our main results:
\begin{theorem}\label{main th}
A function $u\in C(\R,H_+^{1/2}(\R))$ is a traveling wave if and only if there exist $C,p\in\C$
with $\textup{Im}\, p<0$ such that
\begin{equation} \label{eq:simple}
u(0,z)=\frac{C}{z-p}.
\end{equation}
\end{theorem}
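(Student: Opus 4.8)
The plan is to treat the two implications separately, the classification (the ``only if'' direction) being the substantial part. First I would dispose of the degenerate cases. By Lemma \ref{lemma: stationary waves} a nontrivial traveling wave must have $c\neq0$, and then by Lemma \ref{lemma: smothness} the profile $u$ lies in $H^s_+(\R)$ for every $s\ge0$; hence $u$ is smooth, bounded and decaying, and extends holomorphically to $\C_+$. This regularity is what licenses the Fourier manipulations below, and it also means $\supp\widehat{u^2}\subset[0,\infty)$ since $u^2\in L^2_+(\R)$.

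The key reformulation is to read the nonlinearity as a Hankel operator acting on $u$. Writing $H_b(h)=\Pi(b\,\cj h)$ for the antilinear Hankel operator with symbol $b$, one has $\Pi(|u|^2u)=\Pi(u^2\cj u)=H_{u^2}(u)$, so that \eqref{eqn:u} becomes the self-consistent relation $(cD+\omega)u=H_{u^2}(u)$. Taking Fourier transforms and using $\widehat{\cj u}(\eta)=\overline{\widehat u(-\eta)}$, this reads, for $\xi>0$,
\[(c\xi+\omega)\,\widehat u(\xi)=\frac{1}{2\pi}\int_0^\infty \widehat{u^2}(\xi+\eta)\,\overline{\widehat u(\eta)}\,d\eta,\]
an integral equation whose right-hand side is the antilinear Hankel operator $H_{u^2}$ (transported to the Fourier side) applied to $\widehat u$, and whose square is a positive self-adjoint operator.

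The heart of the matter, and the step I expect to be hardest, is to show that the only solutions are the pure exponentials $\widehat u(\xi)=a\,e^{-ip\xi}\mathbf 1_{\xi>0}$ with $\textup{Im}\,p<0$, equivalently that $H_u$ has rank one. The approach I would take is to exploit the rigidity of the integrable structure: the Lax pair $\dot H_u=[B_u,H_u]$ shows that for a traveling wave $H_{u(t)}^2=\tau_{ct}H_{u(0)}^2\tau_{-ct}$ is conjugate by translation to a fixed operator, so its spectrum is frozen, and combining this translation invariance with the eigenrelation $(cD+\omega)u=H_{u^2}u$ should force all but one eigenvalue of $H_u^2$ to vanish. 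Concretely I would differentiate the integral equation in $\xi$ to extract a closed relation forcing the range of $H_u$ to be finite-dimensional, and then invoke the real-line analogue of Kronecker's theorem: a Hankel operator on $L^2_+(\R)$ has finite rank precisely when its symbol is rational with all poles in $\C_-$, and rank one precisely when the symbol is $\frac{C}{x-p}$ with $\textup{Im}\,p<0$. A degree count in the partial-fraction expansion of the equation then rules out the higher-rank cases, giving \eqref{eq:simple}.

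It remains to verify the converse, which is a direct residue computation. For $u(0,z)=\frac{C}{z-p}$ with $\textup{Im}\,p=-\dl<0$ one expands
\[|u|^2u=\frac{|C|^2C}{(x-p)^2(x-\cj p)}\]
in partial fractions and applies $\Pi$, which retains exactly the terms whose pole $p$ lies in $\C_-$ and discards the $\cj p$-term; matching the outcome against $cDu+\omega u$ yields
\[c=\frac{|C|^2}{2\dl},\qquad \omega=\frac{|C|^2}{4\dl^2}.\]
Thus every such $u$ is a genuine traveling wave, with $c,\omega>0$ (in particular $c\neq0$, consistent with the reduction above), and the phase and translation in \eqref{eq:travel} are then read off directly. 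This closes both directions.
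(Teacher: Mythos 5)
Your preliminary reductions (no stationary waves, smoothness via Lemmas \ref{lemma: stationary waves} and \ref{lemma: smothness}) and your converse direction are correct; the residue computation indeed gives $c=\frac{|C|^2}{2\delta}$ and $\omega=\frac{|C|^2}{4\delta^2}$ for $\textup{Im}\,p=-\delta$. But the two load-bearing steps of the classification are asserted rather than proved, and the heuristics you offer in their place cannot work. To show $H_u$ has finite rank you propose that the Lax pair freezes the spectrum of $H_{u(t)}^2=\tau_{ct}H_{u(0)}^2\tau_{-ct}$ and that this, combined with the eigenrelation, ``should force all but one eigenvalue to vanish,'' or else that differentiating the Fourier-side integral equation in $\xi$ yields a closed finite-rank relation. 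Both heuristics are contradicted by the circle case: on $\mathbb{S}^1$ the cubic Szeg\"o equation has exactly the same Lax-pair rigidity and the same eigenrelation, yet it admits traveling waves $Cz^l/(z^N-p)$ whose Hankel operators have arbitrary finite rank $N$; and nothing in your sketch even excludes infinite rank. Finiteness of the rank on $\R$ requires input specific to the line, namely decay at spatial infinity, and the paper converts this into spectral information through a long chain: $(A_u+i)^{-1}-(D+i)^{-1}$ is trace class, so by Kuroda--Birman the wave operators $\Omega^{\pm}(D,A_u)$ exist and are complete, giving $\sigma_{\textup{ac}}(A_u)=[0,\infty)$ (Corollary \ref{cor sigma ac}); since the translates $u(\cdot+t)$ tend to zero, one obtains $\mathcal{H}_{\textup{ac}}(A_u)\subset\textup{Ker}\,H_u$ (Proposition \ref{prop:H_ac subset Ker H_u}); the shift-invariant subspace $\textup{Ker}\,H_u$ equals $\phi L^2_+$ with $\phi$ inner (Proposition \ref{th: Lax 1959}); the Lax pair yields $cD\phi=|u_0|^2\phi$ (Proposition \ref{lemma equation phi}, whose proof again needs $\sigma_{\textup{ac}}(A_u)=[0,\infty)$ to kill a spurious constant); and the canonical factorization \eqref{eqn: factorization} then forces $\phi$ to be a \emph{finite} Blaschke product. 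Only at that point does the Kronecker-type theorem (Theorem \ref{th: Kronecker}) give that $u$ is rational.

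The passage from rational to rank one is likewise not a ``degree count.'' Matching pole orders in \eqref{eqn:u} at a pole $p_j$ of multiplicity $m_j$ only forces $u$ to vanish at $\overline{p}_j$ to order $m_j-1$ (the projected cubic term has a pole of order $2m_j$ minus the vanishing order of $\bar u$ at $p_j$); what remains is a nontrivial algebraic system with no evident contradiction, and again the circle shows that such systems can have higher-rank solutions, so no purely formal count can settle it. The paper's actual mechanism is the infinitesimal shift: from $\Pi(xf)=x\Pi(f)+\frac{1}{2\pi i}\hat f(0)$ one shows that two eigenfunctions of $A_u$ in $\textup{Ran}\,H_u$ with the same eigenvalue are proportional (a suitable combination $\psi=O(x^{-2})$ satisfies $(A_u-a)(x\psi)=\frac{1}{i}\psi$, and self-adjointness of $A_u-a$ forces $\psi=0$), whence $H_u^2u=\lambda u$ (Proposition \ref{prop: Hu^2=ld u}); then, writing $u=H_u(g)$, one proves $|1-g|=1$ on $\R$ and $cDg=\lambda g^2$, which integrates to $g=\frac{r}{z-p}$ and finally $u\in\textup{Ran}\,H_{\frac{1}{z-p}}=\frac{\C}{z-p}$. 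In short, your skeleton (smoothness, Hankel reformulation, Kronecker, rank one) agrees with the paper's, but the two steps that carry the whole proof --- finite rank and simplicity of the relevant eigenvalue --- are missing, and the shortcuts proposed for them would fail.
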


\begin{theorem}\label{main cor}
Let $a>0$, $r>0$, and consider the cylinder
\[C(a,r)=\Big\{\frac{\alpha}{z-p}; |\alpha|=a, \textup{ Im}p=-r\Big\}.\]

\noi
Let $\{u_0^n\}\subset H^{1/2}_+$ with
\begin{equation*}
\inf_{\phi\in C(a,r)}\|u_0^n-\phi\|_{H^{1/2}_+}\to 0 \text{ as } n\to +\infty,
\end{equation*}

\noi
and let $u^n$ denote the solution to \eqref{eq:szego} with initial data $u_0^n$.
Then
\begin{equation*}
\sup_{t\in\R}\inf_{\phi\in C(a,r)}\|u^n(t,x)-\phi(x)\|_{H^{1/2}_+}\to 0.
\end{equation*}
\end{theorem}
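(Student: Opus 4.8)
The plan is to deduce orbital stability from a sharp variational characterization of the cylinder $C(a,r)$ together with the conservation laws $Q$, $M$, and $E$. The first step is to record the values of the conserved quantities on $C(a,r)$: a residue computation gives, for every $\phi\in C(a,r)$,
\[
Q(\phi)=\frac{\pi a^{2}}{r}=:Q_{0},\qquad M(\phi)=\frac{\pi a^{2}}{2r^{2}}=:M_{0},\qquad E(\phi)=\frac{\pi a^{4}}{2r^{3}}=\frac{Q_{0}M_{0}}{\pi},
\]
so that $Q_{0},M_{0}$ determine $a,r$ uniquely. This suggests the constrained maximization
\[
I(q,m):=\sup\{E(u):u\in H^{1/2}_{+},\ Q(u)=q,\ M(u)=m\},\qquad q,m>0,
\]
together with the sharp Gagliardo--Nirenberg-type inequality
\[
E(u)=\|u\|_{L^{4}}^{4}\le \tfrac1\pi\,\|u\|_{L^{2}}^{2}\,\|u\|_{\dot H^{1/2}}^{2}=\tfrac1\pi\,Q(u)M(u),\qquad u\in H^{1/2}_{+},
\]
with equality exactly when $u(x)=C/(x-p)$, $\operatorname{Im}p<0$. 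Finiteness of $I$ is the non-sharp inequality, which I would obtain by writing $E(u)=\|u^{2}\|_{L^{2}}^{2}$, expanding $\widehat{u^{2}}(\xi)=\tfrac1{2\pi}\int_{0}^{\xi}\hat u(\eta)\hat u(\xi-\eta)\,d\eta$ (the support is $[0,\xi]$ since $u\in L^{2}_{+}$), and applying Cauchy--Schwarz with an $\eta^{1/2}(\xi-\eta)^{1/2}$ weight to produce the $\dot H^{1/2}$ factor. The equality case forces the convolution to saturate Cauchy--Schwarz, and the associated Euler--Lagrange equation is precisely the traveling-wave equation \eqref{eqn:u}; hence by Theorem \ref{main th} the extremizers are exactly the rational functions. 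The two scaling symmetries $u\mapsto s\,u$ and $u\mapsto u(\mu\,\cdot)$ force $I(q,m)=\kappa\,qm$ with $\kappa=1/\pi$, so that $C(a,r)=\{u:Q(u)=Q_{0},\ M(u)=M_{0},\ E(u)=I(Q_{0},M_{0})\}$ is exactly the maximizing set at the prescribed constraint values.

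With this in hand I would reduce the theorem to a compactness statement. Set $J(u):=Q(u)M(u)-\pi E(u)\ge 0$, so that $J\ge0$ with $J=0$ on $C(a,r)$. Since $Q$, $M$, $E$, and hence $J$, are conserved by the flow and continuous on $H^{1/2}_{+}$, the hypothesis $\inf_{\phi}\|u_{0}^{n}-\phi\|_{H^{1/2}_{+}}\to0$ yields $Q(u^{n}(t))\to Q_{0}$, $M(u^{n}(t))\to M_{0}$, and $J(u^{n}(t))\to0$ uniformly in $t$. It therefore suffices to prove the following: if $(v_{k})\subset H^{1/2}_{+}$ satisfies $Q(v_{k})\to Q_{0}$, $M(v_{k})\to M_{0}$, and $J(v_{k})\to0$, then $\inf_{\phi\in C(a,r)}\|v_{k}-\phi\|_{H^{1/2}_{+}}\to0$. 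Applying this to $v_{k}=u^{n_{k}}(t_{k})$ for any sequences realizing a hypothetical failure of the supremum closes the argument by contradiction.

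The core is this compactness lemma, which I would establish by concentration--compactness. The constraints bound $(v_{k})$ in $H^{1/2}_{+}$, since $\|v_{k}\|_{H^{1/2}_{+}}^{2}=Q(v_{k})+M(v_{k})\to Q_{0}+M_{0}$. Because $E(v_{k})\to Q_{0}M_{0}/\pi>0$, the $H^{1/2}$-analogue of Lions' vanishing lemma (a sequence bounded in $H^{1/2}_{+}$ whose local $L^{2}$-mass vanishes tends to $0$ in $L^{4}$) excludes vanishing. Dichotomy is excluded by strict superadditivity of the level function: from $I(q,m)=\kappa\,qm$ with $\kappa>0$ one gets $I(Q_{0},M_{0})>I(q_{1},m_{1})+I(q_{2},m_{2})$ whenever $q_{1}+q_{2}=Q_{0}$, $m_{1}+m_{2}=M_{0}$ with both parts nontrivial, so splitting $(v_{k})$ into asymptotically decoupled pieces would strictly lower the attainable energy and contradict $E(v_{k})\to I(Q_{0},M_{0})$. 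Hence, after a translation and a phase, $v_{k}$ concentrates onto a single profile $v_{\infty}$ with $Q(v_{\infty})=Q_{0}$ and $M(v_{\infty})=M_{0}$; thus $v_{\infty}$ is an extremizer, so $v_{\infty}\in C(a,r)$. Convergence of $Q$ and $M$ to the limiting values, combined with weak convergence in $H^{1/2}_{+}$, then upgrades to strong convergence in $H^{1/2}_{+}$, which is the claim.

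The main obstacle is exactly the scale-invariance of the $\dot H^{1/2}$ energy, which places the problem at a critical scaling: plain spatial concentration--compactness does not suffice, and one must work with a profile decomposition in $\dot H^{1/2}$ accounting for both translations and dilations, and then use the subcritical quantity $Q$ (which is not dilation-invariant, whereas $M$ is) to freeze the concentration scale and rule out escape to zero or infinite frequency. The remaining points---the critical vanishing lemma and the passage from weak to strong convergence---are routine once the profile decomposition is in place. The classification Theorem \ref{main th} enters only to identify the extremizers of the sharp inequality with the cylinder $C(a,r)$.
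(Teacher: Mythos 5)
Your overall strategy coincides with the paper's: characterize $C(a,r)$ as the extremal set of the sharp Gagliardo--Nirenberg inequality (Proposition \ref{prop:GN}), use conservation of $Q$, $M$, $E$ to reduce the theorem to a compactness statement for sequences in $H^{1/2}_+$ that asymptotically saturate the inequality at the constraint values $Q_0, M_0$, and prove that compactness up to translations. Your constants, the identity $I(q,m)=qm/\pi$, and the reduction by contradiction along sequences $u^{n_k}(t_k)$ all match Section 5 of the paper (which phrases the variational problem as minimizing $M$ at fixed $Q,E$ rather than maximizing $E$ at fixed $Q,M$ --- an immaterial difference). Your detour through Theorem \ref{main th} for the equality case is admissible but heavier than needed: the paper reads off extremizers directly from the Cauchy--Schwarz saturation $\hat u(\xi)\hat u(\eta)=\hat u(\xi+\eta)\hat u(0)$. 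The problems are in the core compactness lemma, where one step would fail as described and one structural claim is backwards.

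First, the Lions-style exclusion of dichotomy by ``splitting $(v_k)$ into asymptotically decoupled pieces'' is not available here, because the constraint class is a Hardy space: a nonzero element of $L^2_+$ is the boundary value of a function holomorphic in $\C_+$ and cannot vanish on a set of positive measure, and multiplication by cutoffs destroys the condition $\supp\hat{u}\subset[0,\infty)$. So the localized pieces do not lie in $H^{1/2}_+$, and the level function $I$ --- a supremum over $H^{1/2}_+$ --- cannot be evaluated on them. The decoupling must instead be run through the almost-orthogonality identities of a profile decomposition whose profiles stay in $H^{1/2}_+$; this is exactly what the paper's Proposition \ref{prop: profile dec} provides (its profiles are weak limits of translates of the sequence, hence remain in the Hardy space --- a point the paper flags explicitly), and your superadditivity computation then reappears as $\sum_j Q_jM_j\le(\sum_j Q_j)(\sum_j M_j)$ applied to the profiles. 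Second, the claim that ``plain spatial concentration--compactness does not suffice'' and that dilation parameters must be adjoined is incorrect. The functional $E$ is an $L^4$ norm, which is subcritical relative to the inhomogeneous $H^{1/2}$ bound (since $\dot H^{1/4}(\R)\hookrightarrow L^4(\R)$): any bubble whose scale degenerates to $0$ or $\infty$ while staying bounded in $H^{1/2}$ carries vanishing $L^4$ norm, and so is absorbed into the remainder, which the translations-only decomposition of Hmidi--Keraani (Proposition \ref{prop: profile dec}) already controls in $L^p$ for all $2<p<\infty$. Once $Q$ and $M$ are pinned at positive finite values, the Pythagorean expansions plus the sharp inequality force a single translation profile saturating Gagliardo--Nirenberg and force the remainder to vanish in $L^2$ and $\dot H^{1/2}_+$ simultaneously; that is the paper's proof, and no dilation analysis ever enters (the paper's rescaling constants $a_n,\lambda_n$ converge to finite positive limits precisely because $Q$ and $E$ do). Adding dilations is not fatal --- the pinned value of $Q$ would eventually kill such profiles --- but it is unnecessary, and declaring the remaining steps ``routine'' hides where the real content lies: in having a profile decomposition compatible with the Hardy-space constraint.
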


Let us compare our results to those obtained in \cite{PGSG}.
In the case of the Szeg\"o equation on $\mathbb{S}^1$,
the nontrivial stationary waves ($c=0$) are finite Blaschke products of the form
\begin{equation*}
\alpha\prod_{j=1}^N\frac{z-p_j}{1-p_jz},
\end{equation*}

\noi
where $|\alpha|^2=\omega$, $N\in\N$, and $p_1,p_2,...,p_N\in\D$, and the traveling waves with nonzero velocity are
rational fractions of the form:
\begin{equation} \label{eq:simple2}
\frac{Cz^l}{z^N - p },
\end{equation}

\noi
where $N\in\N$, $l\in\{0,1,\dots,N-1\}$, $C,p\in\C$, and $|p|>1$.
Moreover, instability phenomena were displayed for some of the above traveling waves.
For the cubic Szeg\"o equation on $\R$,
Theorems \ref{main th} and \ref{main cor} state
that
there exist less traveling waves (corresponding to  $N = 1$ and $l = 0$ in \eqref{eq:simple2})
and that there is no instability phenomenon.

The proof of Theorem \ref{main th}
involves arguments from several areas of analysis: a Kronecker-type theorem, scattering theory,
existence of a Lax pair structure, a theorem by Lax on invariant subspaces of the Hardy space,
and canonical factorization of Beurling-Lax inner functions.
In the following, we  introduce the main notions and known results,
and
briefly describe the strategy of the proof.

As in \cite{PGSG}, an important property of the Sz\"ego equation on $\R$ is
the existence of a Lax pair structure.
Using the Szeg\"{o} projector, we first define two important classes of operators on $L^2_+$:
{\it the Hankel and Toeplitz operators}.
We use these operators to find a Lax pair.
See Proposition \ref{th:Lax pair}.

A Hankel operator $H_{u}:L^2_+\to L^2_+$ of symbol $u\in H^{1/2}_+$ is defined by
\[H_{u}(h)=\Pi(u\bar{h}).\]

\noi
Note that $H_{u}$ is $\C$-antilinear and satisfies
\begin{equation}\label{sym H_u}
(H_{u}(h_{1}),h_{2})=(H_{u}(h_{2}),h_{1}).
\end{equation}

\noi
In Lemma \ref{H_u is H-S} below we prove that $H_u$ is a Hilbert-Schmidt operator of Hilbert-Schmidt norm $\frac{1}{\sqrt{2\pi}}\|u\|_{\dot{H}^{1/2}}$.

A Toeplitz operator $T_{b}:L^2_+\to L^2_+$ of symbol $b\in L^{\infty}(\R)$ is defined by
\[T_{b}(h)=\Pi(bh).\]

\noi
$T_{b}$ is $\C$-linear.  Moreover, $T_b$ is self-adjoint if and only if $b$ is real-valued.

\begin{proposition}\label{th:Lax pair}
Let $u\in C(\R;H_+^{s})$ for some $s>\frac{1}{2}$.
The cubic Szeg\"{o} equation \eqref{eq:szego} is equivalent to
the following evolution equation:
\begin{equation}
\frac{d}{dt}H_{u}=[B_{u},H_{u}],
\end{equation}

\noi
where $B_{u}=\frac{i}{2}H_{u}^{2}-iT_{|u|^{2}}$.
In other words, the pair $(H_u,B_u)$ is a Lax pair for the cubic Szeg\"{o} equation on the real line.
\end{proposition}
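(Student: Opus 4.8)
The plan is to test the claimed operator identity against an arbitrary \emph{time-independent} vector $h\in L^2_+$ and to compare $\frac{d}{dt}(H_u h)$ with $[B_u,H_u]h$. Since $u\in C(\R;H^s_+)$ with $s>\frac12$, Sobolev embedding gives $u(t)\in L^\infty$, so $|u|^2u\in H^s_+$ and every product occurring below genuinely lies in $L^2$; moreover the symbol enters $H_u(h)=\Pi(u\overline h)$ $\C$-linearly, so $t\mapsto H_{u(t)}$ is differentiable with $\frac{d}{dt}H_u=H_{\dt u}$. Substituting the Szeg\"o equation $\dt u=-i\Pi(|u|^2u)$ and using that for $h\in L^2_+$ the factor $\overline h$ has non-positive frequencies, whence $\Pi$ annihilates the negative-frequency correction $(I-\Pi)(|u|^2u)\,\overline h$, I would obtain
\begin{equation*}
\frac{d}{dt}(H_u h)=-i\,\Pi\big(\Pi(|u|^2u)\,\overline h\big)=-i\,\Pi(|u|^2u\,\overline h).
\end{equation*}

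Next I would compute the commutator, the one place where the antilinearity of $H_u$ must be handled with care: from $H_u(\alpha g)=\overline\alpha\,H_u g$, conjugating the scalars in $B_uh=\frac{i}{2}H_u^2h-iT_{|u|^2}h$ gives $H_uB_uh=-\frac i2H_u^3h+iH_uT_{|u|^2}h$, so that
\begin{equation*}
[B_u,H_u]h=iH_u^3h-i\,T_{|u|^2}H_uh-i\,H_uT_{|u|^2}h.
\end{equation*}
Comparing the two displays, the proposition is equivalent to the purely algebraic identity
\begin{equation*}
\Pi(|u|^2u\,\overline h)=-H_u^3h+T_{|u|^2}H_uh+H_uT_{|u|^2}h.
\end{equation*}

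Establishing this identity is the main obstacle, and I would do it by frequency bookkeeping. Write $u\overline h=p+q$ with $p:=H_uh=\Pi(u\overline h)\in L^2_+$ and $q:=(I-\Pi)(u\overline h)$ supported in the negative frequencies; the crucial point is that $\overline q$ equals the positive-frequency part $\Pi(\overline u\,h)$ of $\overline{u\overline h}=\overline u\,h$, hence lies in $L^2_+$, so that $u\overline q\in L^2_+$ and $\Pi$ acts trivially on it. Splitting $\overline u\,h=\overline p+\overline q$ then yields on one hand
\begin{equation*}
\Pi(|u|^2u\,\overline h)=\Pi(|u|^2p)+\Pi(|u|^2q)=T_{|u|^2}H_uh+\Pi(|u|^2q),
\end{equation*}
and on the other hand
\begin{equation*}
T_{|u|^2}h=\Pi(u\overline p)+\Pi(u\overline q)=H_u^2h+u\overline q,\qquad H_uT_{|u|^2}h=H_u^3h+\Pi(|u|^2q),
\end{equation*}
and substituting the second relation into the target identity makes both sides agree. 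This proves that \eqref{eq:szego} implies the Lax equation. For the converse I would invoke the injectivity of the map $v\mapsto H_v$ on $L^2_+$ — immediate from $(H_vh_1,h_2)=\int v\,\overline{h_1h_2}$ together with the density of the products $h_1h_2$ — so that $\frac{d}{dt}H_u=H_{\dt u}$ and $[B_u,H_u]=H_{-i\Pi(|u|^2u)}$ force $\dt u=-i\Pi(|u|^2u)$, recovering \eqref{eq:szego}.
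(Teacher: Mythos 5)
Your proof is correct and takes essentially the same route as the paper: your ``target identity'' $\Pi(|u|^{2}u\,\overline h)=T_{|u|^{2}}H_uh+H_uT_{|u|^{2}}h-H_u^{3}h$ is exactly the paper's key identity \eqref{Lax pair identity} applied to $h$ (after the observation $H_{\Pi(|u|^{2}u)}h=\Pi(|u|^{2}u\,\overline h)$), and the rest is the same commutator bookkeeping with the antilinearity of $H_u$. The only difference is that you spell out the frequency-decomposition proof of that identity and the injectivity of $v\mapsto H_v$ for the converse, details the paper delegates to \cite{PGSG}.
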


The proof of Proposition \ref{th:Lax pair} follows the same
lines as that
of the corresponding result on $\mathbb{S}^1$ in \cite{PGSG}, and is based on the following identity:
\begin{equation}\label{Lax pair identity}
H_{\Pi(|u|^{2}u)}=T_{|u|^{2}}H_u+H_uT_{|u|^{2}}-H_u^3.
\end{equation}

\noi
Combining \eqref{eqn:u} and \eqref{Lax pair identity}, we deduce that if $u$ is a traveling wave with $c\neq 0$,
then the following identity holds:
\begin{equation}\label{identity for traveling waves}
A_uH_u+H_u A_u+\frac{\omega}{c} H_u+\frac{1}{c}H_u^{3}=0,
\end{equation}

\noi
where
\begin{equation}\label{eqn:A}
A_u=D-\frac{1}{c}T_{|u|^{2}}.
\end{equation}

In Section 2,
we prove a Kronecker-type theorem for the Hardy space $L^2_+(\R)$,
where we classify all the symbols $u$ such that the operator $H_u$ has finite rank. For a proof of the classical theorem for $L^2_+(\mathbb{S}^1)$, due to Kronecker, see \cite{PGSG}.

We prove Theorem \ref{main th} in Section 4.
We first prove that all the traveling waves are rational fractions.
On $\mathbb{S}^1$, this follows easily from the Kronecker theorem
and the fact that the operator $A_u$ has discrete spectrum.
On $\R$, however,
it turns out that $A_u$ has continuous spectrum.
Therefore, we use scattering theory to study the spectral properties of $A_u$
in detail in Section 3.
More precisely, we show that the generalized wave operators $\Omega^{\pm}(D,A_u)$,
rigorously defined by \eqref{eqn: wave operators} below,
exist and are complete. As a result, we obtain that
\[\mathcal{H}_{\text{ac}}(A_u)\subset \text{Ker} \, H_{u},\]

 \noi
 where $\mathcal{H}_{\text{ac}}(A_u)$ is the absolutely continuous subspace of $A_u$.
 The subspace $\text{Ker}\,  H_u$ plays an important role in our analysis.
 More precisely, it turns out to be invariant under multiplication
 by $e^{i\alpha x}$, for all $\alpha \geq 0$.
 Therefore, applying a theorem by Lax  (Proposition \ref{th: Lax 1959} below)
 on invariant subspaces, it results that
\[\text{Ker}\, H_u=\phi L^2_+,\]

\noi
 where $\phi$ is an inner function in the sense of Beurling-Lax,
 i.e. a bounded holomorphic function on $\C_+$ such that $|\phi(x)|=1$ for all $x\in\R$.
 Using the Lax pair structure and the identity \eqref{identity for traveling waves},
 we show that $\phi$ satisfies the following simple equation:
\begin{equation*}
cD\phi=|u|^2\phi.
\end{equation*}

\noi
However, as an inner function, $\phi$ satisfies a canonical factorization \eqref{eqn: factorization}.
From this,
it follows that $\phi$ belongs to a special class of inner functions, the finite Blaschke products, i.e.
\begin{equation*}
\phi(z)=\prod_{j=1}^N\frac{z-\ld_j}{z-\cj{\ld_j}},
\end{equation*}

\noi
where $N\in\N$ and $\text{Im} \ld_j>0$ for all $j=1,2,\dots,N$.
The Kronecker-type theorem then yields that the traveling wave $u$ is a rational fraction.

In the case of $\mathbb{S}^1$,
the natural shift, multiplication by $e^{ix}$,
was used
in concluding traveling waves are of the form \eqref{eq:simple2}.
In our case,
we use the ``infinitesimal" shift, multiplication by $x$,
to show that traveling waves are of the form \eqref{eq:simple}.

Finally, we prove Theorem \ref{main cor} in Section 5.
The orbital stability of traveling waves is a consequence of the fact
that traveling waves are ground states for the following inequality,
an analogue of Weinstein's sharp Gagliardo-Nirenberg inequality in \cite{Weinstein}.

\begin{proposition}\label{prop:GN}
For all $u\in H^{1/2}_{+}(\R)$ the following Gagliardo-Nirenberg inequality holds:
\begin{equation}\label{ineqn:Gagliardo-Nirenberg}
\|u\|_{L^{4}}\leq \frac{1}{\sqrt[4]{\pi}}\|u\|_{L^{2}}^{1/2}\|u\|_{\dot{H}_+^{1/2}}^{1/2},
\end{equation}
or, equivalently,
\begin{equation}\notag
E\leq \frac{1}{\pi}MQ.
\end{equation}

\noi
Moreover, equality holds if and only if $u=\frac{C}{x-p}$, where $C,p\in\C$ with $\textup{Im}p<0$.
\end{proposition}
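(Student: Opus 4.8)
The plan is to pass to the Fourier side, where the Hardy condition $\supp\hat u\subset[0,\infty)$ turns the pointwise product $u^2$ into a one‑sided convolution, and then to extract the inequality from a single application of Cauchy–Schwarz followed by Fubini. Writing $f=\hat u$, Plancherel gives $\|u\|_{L^4}^4=\|u^2\|_{L^2}^2$, and since $\widehat{u^2}=\frac{1}{2\pi}\,f*f$ is supported on $[0,\infty)$,
\[
\|u\|_{L^4}^4=\frac{1}{(2\pi)^3}\int_0^\infty\Big|\int_0^\xi f(\eta)f(\xi-\eta)\,d\eta\Big|^2 d\xi ,
\]
while $Q=\|u\|_{L^2}^2=\frac{1}{2\pi}\int_0^\infty|f|^2$ and $M=\|u\|_{\dot H^{1/2}_+}^2=\frac{1}{2\pi}\int_0^\infty\xi\,|f(\xi)|^2\,d\xi$ (recall $M(u)=\int\bar uDu$ has exactly this Fourier representation for $u\in L^2_+$). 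A short density argument reduces these manipulations to, say, Schwartz symbols, the finiteness of $\|u\|_{L^4}$ then being part of the conclusion.

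For the inequality itself I would apply Cauchy–Schwarz to the inner convolution integral against the constant function $1$ on $[0,\xi]$:
\[
\Big|\int_0^\xi f(\eta)f(\xi-\eta)\,d\eta\Big|^2\le \xi\int_0^\xi |f(\eta)|^2\,|f(\xi-\eta)|^2\,d\eta .
\]
Integrating in $\xi$ and performing the change of variables $(\eta,\xi-\eta)=(s,t)$ (Jacobian $1$) turns the resulting double integral into $\iint_{s,t\ge0}(s+t)\,|f(s)|^2|f(t)|^2\,ds\,dt$, which by the $s\leftrightarrow t$ symmetry equals $2\big(\int_0^\infty s|f|^2\big)\big(\int_0^\infty|f|^2\big)$. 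Collecting the constants $\frac{1}{(2\pi)^3}\cdot 2\cdot(2\pi)^2=\frac1\pi$ and using $\int_0^\infty s|f|^2=2\pi M$, $\int_0^\infty|f|^2=2\pi Q$ yields exactly $\|u\|_{L^4}^4\le\frac1\pi MQ$, which is \eqref{ineqn:Gagliardo-Nirenberg}.

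The equality case is where the real work lies. Equality in the Cauchy–Schwarz step holds for a.e.\ $\xi$ if and only if $\eta\mapsto f(\eta)f(\xi-\eta)$ is a.e.\ constant on $[0,\xi]$; equivalently, taking $\eta=\xi/2$, $f(\eta)f(\xi-\eta)=f(\xi/2)^2$. This is a multiplicative Cauchy functional equation, and together with the integrability forced by $M,Q<\infty$ it constrains $f$ to be of the form $f(\xi)=A\,e^{-c\xi}$ on $[0,\infty)$ with $\text{Re}\,c>0$ and $A\in\C$. Inverting the Fourier transform—or simply recalling that $\widehat{(x-p)^{-1}}(\xi)=-2\pi i\,e^{-ip\xi}\mathbf{1}_{\xi>0}$ for $\text{Im}\,p<0$—identifies such $u$ with $C/(x-p)$, where $p=-ic$ has $\text{Im}\,p=-\text{Re}\,c<0$; a direct substitution confirms that these functions saturate \eqref{ineqn:Gagliardo-Nirenberg}.

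I expect the main obstacle to be making this functional‑equation step rigorous: one must deal with the a.e.\ nature of the identity, the measurability of $f$, and the possibility that $f$ vanishes on a set of positive measure, before concluding the exponential form. The plan is to work on the open set where $f\neq0$, deduce from $f(\eta)f(\xi-\eta)=f(\xi/2)^2$ that $\log f$ is (locally) affine there, and then exclude vanishing: a zero of $f$ at $\eta_0$ would force $f\equiv0$ on $[\eta_0/2,\infty)$, while on $[0,\eta_0)$ the relation already pins $f$ to a nonvanishing exponential, and the two are incompatible with the continuity built into the functional equation. This gives $f=A\,e^{-c\xi}$ and hence the stated extremizers.
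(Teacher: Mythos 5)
Your proposal is correct and follows essentially the same route as the paper's proof: pass to the Fourier side via Plancherel, apply Cauchy--Schwarz against the constant function $1$ on $[0,\xi]$, use the splitting $\xi=\eta+(\xi-\eta)$ together with Fubini and symmetry to obtain $E\leq \frac{1}{\pi}MQ$, and characterize equality through the resulting multiplicative functional equation, which forces $\hat{u}(\xi)=Ae^{-ip\xi}$ on $[0,\infty)$ and hence $u=\frac{C}{x-p}$ with $\textup{Im}\,p<0$. If anything, the paper is terser than you on the equality case (it simply asserts the relation $\hat{u}(\xi)\hat{u}(\eta)=\hat{u}(\xi+\eta)\hat{u}(0)$ and its exponential solution), so the measure-theoretic care you flag is extra polish rather than a different approach.
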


\begin{remark}\rm
As a consequence of Proposition \ref{prop:GN}, one can verify that the functions $u=\frac{C}{x-p}$, with $\text{Im} p<0$, are indeed initial data for traveling waves.
More precisely,
since they are minimizers of the functional
\[v\in H^{1/2}_+\mapsto M(v)Q(v)-\pi E(v),\]
the differential of this functional at $u$ is zero. Thus,
\[\frac{1}{2}Q(u)Du+\frac{1}{2}M(u)u-\pi \Pi(|u|^2u)=0.\]
Consequently, $u$ is a solution of equation \eqref{eqn:u} with
\[c=\frac{Q(u)}{2\pi}=\frac{|C|^2}{-2\textup{Im} p},\,\,\,\,\,\,\,\,\, \omega=\frac{M(u)}{2\pi}=\frac{|C|^4}{4(-\textup{Im} p)^3}\]
and hence it is an initial datum for a traveling wave.
\end{remark}

In the case of $\mathbb{S}^1$, the Gagliardo-Nirenberg inequality suffices to conclude
the stability of the traveling waves with $N = 1$.
However, in the case of $\R$,
we need to use in addition a concentration-compactness argument.
This concentration-compactness argument,
which first appeared in the work of Cazenave and  Lions \cite{Cazenave},
was refined and turned into profile decomposition theorems by G\'erard \cite{PG}
and later by Hmidi and Keraani \cite{Keraani}.
We  use it in the form of Proposition \ref{prop: profile dec},
a profile decomposition theorem for bounded sequences in $H^{1/2}_+$.

We conclude this introduction by presenting two open problems.
Here, we use the term soliton instead of traveling wave, so that we put into light several connections with existing works.
The first problem is the soliton resolution,
which consists in writing any solution as a superposition of solitons and  radiation.
For the KdV equation,
this property was rigorously stated in \cite{KdV}
for initial data to which the Inverse Scattering Transform applies.
Therefore, for the Sz\"ego equation,
one needs to solve inverse spectral problems
for the Hankel operators and also find explicit action angle coordinates.

The second open problem is the interaction of solitons with external potentials.
Consider the Sz\"ego equation with a linear potential,
where initial data are taken to be of the form \eqref{eq:simple}.
As in the works of Holmer and Zworski \cite{Zworski}
and Perelman \cite{Perelman},
it would be interesting to investigate if solutions of
the perturbed Sz\"ego equation can be approximated by traveling wave solutions to
the original Szeg\"o equation \eqref{eq:szego}.

\section{A Kronecker-type theorem}
A theorem by Kronecker asserts in the setting of $\mathbb{S}^1$ that
the set of symbols $u$ such that $H_u$ is of rank $N$ is precisely a
$2N$-dimensional complex submanifold of $L^2_+(\mathbb{S}^1)$ containing
only rational fractions. In this section, we prove the analogue of this
theorem in the case of $L^2_+(\R)$. For a different proof of a similar
result on some Hankel operators
on $L^2_+(\R)$ defined in a slightly different way, we refer to Lemma 8.12, p.54 in \cite{Peller}.
\begin{definition}
Let $N\in\N^{\ast}$. We denote by $\M(N)$ the set of rational fractions of the form
$$\frac{A(z)}{B(z)},$$
where $A\in\C_{N-1}[z]$, $B\in\C_{N}[z]$, $0\leq \deg(A)\leq N-1$, $\deg(B)=N$, $B(0)=1$, $B(z)\neq 0$,
for all $z\in\C_{+}\cup\R$, and $A$ and $B$ have no common factors.
\end{definition}

\begin{theorem}\label{th: Kronecker}
The function $u$ belongs to $\M(N)$ if and only if the Hankel operator $H_u$ has complex rank N.

Moreover, if $u\in \M(N)$, $u(z)=\frac{A(z)}{B(z)}$, where $B(z)=\prod_{j=1}^{J}(z-p_j)^{m_j}$,
 with $\sum_{j=1}^Jm_j=N$ and $\textup{Im} p_j<0$ for all $j=1,2,...,J$,
 then the range of $H_u$ is given by
\begin{equation}\label{ran}
\textup{Ran}\,H_u=\textup{span}_{\C}\bigg\{\frac{1}{(z-p_j)^{m}}, 1\leq m\leq m_j\bigg\}_{j=1}^J
\end{equation}

\end{theorem}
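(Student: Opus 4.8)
The plan is to reduce both implications to finite-dimensional linear algebra, using the reproducing kernel of $L^2_+(\R)$ together with the backward-shift semigroup. First I would record that the Szeg\"o kernel of $L^2_+(\R)$ is $K_w(z)=\frac{i}{2\pi(z-\cj w)}$ for $w\in\C_+$, so that $h(w)=(h,K_w)$ for every $h\in L^2_+$; in particular the $K_w$ span a dense subspace. Applying the symmetry \eqref{sym H_u} gives $(H_uh)(w)=(H_uK_w,h)$, and a residue/partial-fraction computation yields
\begin{equation*}
H_uK_w=\frac{-i}{2\pi}\,\frac{u(x)-u(w)}{x-w}.
\end{equation*}
When $u=A/B\in\M(N)$, the right-hand side is a rational function with denominator $B$ and numerator of degree $\le N-1$, hence lies in $V:=\textup{span}_\C\{(z-p_j)^{-m}:1\le m\le m_j\}$. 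Since the $K_w$ are total, $H_u$ is continuous and antilinear, and $V$ is closed (being finite-dimensional), this already gives $\textup{Ran}\,H_u\subseteq V$ and $\textup{rank}\,H_u\le N$.

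For the reverse inclusion I would show $H_u$ maps $V$ onto itself. A partial-fraction computation gives $H_u(V)\subseteq V$: for $v=(x-p_j)^{-m}$ one has $\cj v=(x-\cj p_j)^{-m}$ with $\cj p_j\in\C_+$, so in $u\cj v=A/\bigl(B\,(x-\cj p_j)^m\bigr)$ the projector $\Pi$ annihilates the terms with pole at $\cj p_j\in\C_+$ (these lie in the negative-frequency space $L^2_-$) and retains the terms with poles at the $p_j$, i.e.\ an element of $V$. It then suffices to prove $H_u|_V$ is injective. If $v=Q/B$ with $\deg Q\le N-1$ and $H_uv=0$, then $A\tilde Q/(B\tilde B)$ has no poles in $\C_-$, where $\tilde Q,\tilde B$ denote the polynomials obtained from $Q,B$ by conjugating their coefficients; since $\gcd(A,B)=1$ forces $A(p_j)\ne 0$, cancellation at each $p_j$ requires $(x-p_j)^{m_j}\mid\tilde Q$, whence $B\mid\tilde Q$ and therefore $Q=0$ by a degree count. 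Thus $H_u|_V$ is an antilinear bijection of $V$, so $V=H_u(V)\subseteq\textup{Ran}\,H_u$; combined with the previous inclusion this gives $\textup{Ran}\,H_u=V$ and $\textup{rank}\,H_u=N$, proving the sufficiency direction and the formula \eqref{ran}.

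For the converse, assume $H_u$ has rank $N$. The key is the intertwining relation: writing $S_\alpha h=e^{i\alpha x}h$ and $S_\alpha^* h=\Pi(e^{-i\alpha x}h)$ for $\alpha\ge 0$, a direct computation (using that $e^{-i\alpha x}$ preserves $L^2_-$) gives $H_uS_\alpha=S_\alpha^* H_u$. Hence $\textup{Ran}\,H_u$ is a finite-dimensional subspace invariant under the whole backward-shift semigroup $\{S_\alpha^*\}_{\alpha\ge 0}$. On such a subspace $\{S_\alpha^*\}$ restricts to a matrix semigroup $e^{\alpha G}$, so the space is invariant under the generator $G$ and decomposes into its generalized eigenspaces. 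Passing to the Fourier side, where $S_\alpha^*$ is truncated left translation, the eigen-relation $S_\alpha^* f=e^{-\mu\alpha}f$ forces $\hat f(\xi)=c\,e^{-\mu\xi}$ with $\textup{Re}\,\mu>0$ (by $L^2$-integrability), i.e.\ $f\propto(x-p)^{-1}$ with $p=-i\mu\in\C_-$, while Jordan chains produce the higher powers $(x-p)^{-m}$. Therefore $\textup{Ran}\,H_u=V$ for some configuration of poles $p_j\in\C_-$ with $\sum m_j=N$. Feeding this back into $H_uK_w\in\textup{Ran}\,H_u=V$ shows $\frac{u(x)-u(w)}{x-w}=Q_w(x)/B(x)$, so $u=\bigl(u(w)B+(x-w)Q_w\bigr)/B$ is rational with denominator $B$; the decay $u\in L^2$ forces the numerator degree $\le N-1$, the normalization $B(0)=1$ is free, and the numerator and denominator must be coprime, since otherwise the sufficiency direction applied to the reduced fraction would give $\textup{rank}\,H_u<N$. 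Hence $u\in\M(N)$.

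The main obstacle is the converse, specifically the classification of the finite-dimensional $\{S_\alpha^*\}$-invariant subspaces. Unlike the circle, where the backward shift is a single bounded operator with discrete spectrum and the corresponding statement is classical, here one must work with an unbounded generator and the continuous spectrum of translation; the care needed is to justify rigorously that a finite-dimensional invariant subspace is genuinely invariant under the generator and that its (generalized) eigenfunctions are exactly the resolvent-type functions $(x-p)^{-m}$ with $\textup{Im}\,p<0$. Everything else --- the reproducing-kernel computation, the $\Pi$-projection of partial fractions, and the coprimality/degree bookkeeping --- is routine once this structural point is in place.
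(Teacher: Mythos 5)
Your proposal is correct, but it takes a genuinely different route from the paper's proof. For the implication $u\in\M(N)\Rightarrow\mathrm{rk}(H_u)=N$, the paper computes $\widehat{H_u(h)}$ directly in Fourier variables and reads off that it is a linear combination of the functions $\xi^{m-1}e^{-ip\xi}$; you instead evaluate $H_u$ on the reproducing kernels $K_w$ via the difference-quotient formula $H_uK_w=\frac{-i}{2\pi}\,\frac{u(x)-u(w)}{x-w}$ and invoke totality of $\{K_w\}_{w\in\C_+}$. One advantage of your version: by showing $H_u|_V$ is an antilinear bijection you get the exact rank and the range identity \eqref{ran} in one stroke, whereas the paper obtains exactness of the rank only a posteriori, by feeding a hypothetical rank $N'<N$ back through its converse implication. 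It is in the converse that the two arguments genuinely diverge: the paper chooses a Takagi-type basis $w_j$ with $H_uw_j=\ld_jw_j$, derives the kernel identity $\hat u(\xi+\eta)=2\sum_j\hat w_j(\xi)\hat w_j(\eta)$ for $\xi,\eta\ge0$, and then uses a chi-square density as a test function, together with a distributional support argument, to produce a constant-coefficient ODE satisfied by $\hat u$ on $(0,\infty)$. You instead use the intertwining $H_uS_\alpha=S_\alpha^*H_u$ to see that $\mathrm{Ran}\,H_u$ is a finite-dimensional subspace invariant under the backward-shift semigroup, classify such subspaces through the generator ($d/d\xi$ on the Fourier side, whose Jordan structure yields precisely the nested spans of $(x-p)^{-m}$ with $\mathrm{Im}\,p<0$), and then recover $u$ itself from $H_uK_w\in\mathrm{Ran}\,H_u$; this last step is a nice trick, since a priori nothing places $u$ in $\mathrm{Ran}\,H_u$. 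Your route is the half-line analogue of the classical backward-shift proof of Kronecker's theorem on the circle, and it makes the appearance of the functions $(x-p_j)^{-m}$ structurally transparent; its cost is the $C_0$-semigroup input you correctly flag (strong continuity of the restricted semigroup on a finite-dimensional invariant subspace, hence a bounded generator and the form $e^{\alpha G}$, hence generalized eigenspaces), which is standard but must be justified, including the observation that the $d/d\xi$-invariant subspaces of $\mathrm{span}\{\xi^{l}e^{-\mu\xi}\}$ are nested. The paper's test-function device avoids semigroup theory altogether. Note finally that your intertwining relation is exactly the adjoint form of a fact the paper proves and uses only later, in Section 4: the invariance of $\mathrm{Ker}\,H_u=(\mathrm{Ran}\,H_u)^{\perp}$ under multiplication by $e^{i\alpha x}$, $\alpha\ge0$. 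In the end both proofs reduce to solving a constant-coefficient ODE --- yours for the elements of $\mathrm{Ran}\,H_u$, the paper's for $\hat u$ itself.
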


\begin{proof}
The theorem will follow once we prove:

\begin{itemize}
\item[(i)] $u\in\M(N)\LRA \text{rk}(H_u)\leq N$

\item[(ii)]$\text{rk}(H_u)=N\LRA u\in\M(N).$
\end{itemize}

Let us first prove (i).
Let $u\in\M(N)$, i.e. $u$ is a linear combination of
$$\frac{1}{(z-p)^{m}},$$
where $\text{Im} p <0$, $1\leq m\leq m_{p}$, and $\sum m_{p}=N$.
Then, computing the integral
\begin{equation*}
\int_{\R}\frac{e^{-ix\xi}}{(x-p)^{m}}dx,
\end{equation*}

\noi
using the residue theorem, we obtain that $\hat{u}(\xi)=0$ for all $\xi\leq0$ and $\hat{u}(\xi)$
is a linear combination of $\xi^{m-1}e^{-ip\xi}$, with $1\leq m\leq m_{p}$, for $\xi> 0$.

Given $h\in L^2_+$,  we have $\widehat{H_{u}(h)}(\xi)=0$ for $\xi<0$.
Moreover, for $\xi>0$, we have
\begin{align}
\widehat{H_{u}(h)}(\xi)
& =\frac{1}{2\pi}\int_{-\infty}^{0}\hat{u}(\xi-\eta)\hat{\bar{h}}(\eta)d\eta \notag\\
& =\frac{1}{2\pi}\int_{0}^{\infty}\hat{u}(\xi+\eta)\overline{\hat{h}}(\eta)d\eta \label{H_u Fourier}\\
& =\sum_{\substack{1\leq m\leq m_{p} \notag
\\ \sum m_{p}=N}} c_{m,p}
\bigg(\sum_{k=0}^{m-1}C_{m-1}^{k}\xi^{m-1-k}\int_0^{\infty}\eta^{k}
\overline{\hat{h}}(\eta)e^{-ip\eta}d\eta \bigg)e^{-ip\xi}\notag\\
& =\sum_{\substack{1\leq m\leq m_{p}\notag\\ \sum m_{p}=N}}\tilde{d}_{m,p}(u,h)\xi^{m-1}e^{-ip\xi}
 =\sum_{\substack{1\leq m\leq m_{p} \notag\\ \sum m_{p}=N}}d_{m,p}(u,h)
 \bigg(\frac{1}{(x-p)^{m}}\bigg)^{\wedge}(\xi),\notag
\end{align}

\noi
where $c_{m,p}$, $\tilde{d}_{m,p}$, $d_{m,p}$ are constants depending on $p$ and $m$.

Hence,
\begin{equation}\label{eqn: Hu}
H_{u}(h)(x)=\sum_{\substack{1\leq m\leq m_{p}\\ \sum m_{p}=N}}\frac{d_{m,p}(u,h)}{(x-p)^{m}}
\end{equation}

\noi
and $\text{rk}(H_u)\leq N$.

Let us now prove (ii).
Assume that $\text{rank}(H_u)=N$, i.e.
the range of $H_u$, $\text{Ran}\, H_u$, is a 2$N$-dimensional real vector space.
As $H_u$ is $\C$-antilinear,
one can choose a basis of $\text{Ran}\, H_u$ of eigenvectors of $H_u$ in the following way:
\[\{v_1,iv_1,...,v_N,iv_N\, ; \,H_{u}(v_{j})=\ld_jv_j, \ld_j>0,j=1,2,\dots,N\}\]

\noi
Let $w_j=\sqrt{\ld}_jv_j.$ If $h\in L^2_+$, then by Parseval's identity we have
\begin{align*}
H_u(h)& =\sum_{j=1}^{N}(H_u(h),v_j)v_j+\sum_{j=1}^{N}(H_u(h),iv_j)iv_j=2\sum_{j=1}^{N}(H_u(h),v_j)v_j
=2\sum_{j=1}^{N}(H_u(v_j),h)v_j\\
&=2\sum_{j=1}^{N}(\ld_jv_j,h)v_j=2\sum_{j=1}^{N}(w_j,h)w_j
=\frac{1}{\pi}\sum_{j=1}^{N}\Big{(}\int_0^{\infty}\hat{w_j}(\eta)\overline{\hat{h}}(\eta)d\eta\Big{)} w_j.
\end{align*}

\noi
Consequently,
\begin{align*}
\widehat{H_{u}(h)}(\xi)
 =\frac{1}{2\pi}\pmb{1}_{\xi\geq 0}\int_{0}^{\infty}\hat{u}(\xi+\eta)\overline{\hat{h}}(\eta)d\eta
=\frac{1}{\pi}\pmb{1}_{\xi\geq 0}\sum_{j=1}^{N}\int_0^{\infty}\hat{w}_j(\eta)\hat{w}_j(\xi)\overline{\hat{h}}(\eta)d\eta.
\end{align*}

\noi
and hence,
\[\pmb{1}_{\xi\geq 0}\int_0^{\infty}\Big{(}\hat{u}(\xi+\eta)-2\sum_{j=1}^{N}\hat{w}_j(\eta)\hat{w}_j(\xi)\Big{)}\overline{\hat{h}}(\eta)d\eta=0,\]

\noi
for all $h\in L^2_+$. Therefore, for all $\xi,\eta\geq 0$, we have
\begin{align}\label{hat u(xi+eta)}
\hat{u}(\xi+\eta)=2\sum_{j=1}^{N}\hat{w}_j(\eta)\hat{w}_j(\xi).
\end{align}

\noi
Let $L>2N+1$ be an even integer and $\phi$ be the probability density function of the chi-square distribution defined by
\begin{align*}
\phi(\xi)=
\begin{cases}
&\frac{1}{2^{\frac{L}{2}}\Gamma (\frac{L}{2})}\xi ^{\frac{L}{2}-1}e^{-\frac{\xi}{2}}, \text{ if } \xi \geq 0\\
&0, \text{ if } \xi <0,
\end{cases}
\end{align*}

\noi
where $\Gamma$ is the Gamma function. Then, its Fourier transform is
\begin{align*}
\ft{\phi}(x)=(1+2ix)^{-\frac{L}{2}}.
\end{align*}

\noi
Notice that $\phi\in H^N(\R)$ since
\begin{align*}
\|\phi\|^2_{H^N}=\int_{\R} \frac{\jb{x}^{2N}}{|1+2i x|^{L}}dx
\end{align*}

\noi
which is convergent if and only if $2N-L<-1$.

Let $\jb{\theta,\psi}=\int_{\R}\theta(x)\psi(x)$ for all $\theta\in H^{-N}(\R)$ and $\psi\in H^N(\R)$. Consider the matrix $A_{\phi}$ defined by:
\[\left(
\begin{matrix}
\jb{\hat{w}_1,\phi} & \jb{\hat{w}_1',\phi} & \cdots & \jb{\hat{w}_1^{(N)},\phi} \\
\jb{\hat{w}_2,\phi} & \jb{\hat{w}_2',\phi} & \cdots & \jb{\hat{w}_2^{(N)},\phi}  \\
\vdots & \vdots  & \ddots & \vdots \\
\jb{\hat{w}_N,\phi} & \jb{\hat{w}_N',\phi} & \cdots & \jb{\hat{w}_N^{(N)},\phi}
\end{matrix}
\right)\]

\medskip
\noi

Since $\text{rk}(A_{\phi})\leq N$, it results that there exists $(c_0,c_1,\dots,c_N)\neq 0$ such that
\[\Big\langle \sum_{k=0}^{N}c_k\hat{w_j}^{(k)},\phi \Big\rangle=0,\]

\noi
for all $j=1,2,\dots,N$.
Then, since $\supp \phi\subset [0,\infty)$ and by \eqref{hat u(xi+eta)}, we have for all $\eta\geq 0$ that
\begin{align*}
\sum_{k=0}^{N}\Big\langle c_k\hat{u}^{(k)}(\xi),\phi(\xi-\eta)\Big\rangle_{\xi}&=\sum_{k=0}^{N}\Big\langle c_k\hat{u}^{(k)}(\xi+\eta),\phi(\xi)\Big\rangle_{\xi}=\sum_{k=0}^N(-1)^kc_k\int_0^{\infty}\hat{u}(\xi+\eta)\phi^{(k)}(\xi)d\xi\\
&=2\sum_{k=0}^N(-1)^kc_k\int_0^{\infty}\Big (\sum_{j=1}^N\hat{w}_j(\eta)\hat{w}_j(\xi)\Big)\phi^{(k)}(\xi)d\xi\\
&= 2\sum_{j=1}^{N}\hat{w}_j(\eta)\sum_{k=0}^{N}c_k\Big\langle\hat{w}_j^{(k)}(\xi),\phi(\xi)\Big\rangle=0.
\end{align*}

\noi
Denote $T=\sum_{k=0}^Nc_k\hat{u}^{(k)}$. Then $T\in H^{-N}$ and $\supp T\in [0,\infty)$. We have just proved that for all $\eta\geq 0$
\begin{align*}
0&=\jb{T,\phi(\cdot-\eta)}=\int_{\R}T(\xi)\phi(\xi-\eta)d\xi=\int_{\R}T(\xi)\Big (\int_{\R}\frac{e^{ix(\xi-\eta)}}{(1+2ix)^{L/2}}dx\Big )d\xi\\
&=\int_{\R}\Big (\int_{\R}T(\xi)e^{ix\xi}d\xi \Big )\frac{e^{-ix\eta}}{(1+2ix)^{L/2}}dx=\int_{\R}\mathcal{F}^{-1}T(x)\frac{e^{-ix\eta}}{(1+2ix)^{L/2}}dx.
\end{align*}

\noi
Denoting $R(x):=\frac{1}{(1+2ix)^{L/2}}\mathcal{F}^{-1}T(x)$, we have $\hat{R}\in H^{L/2-N}(\R)\subset H^{1/2}(\R)$ and
\begin{align*}
0=\int_{\R}R(x)e^{-ix\eta}dx=\hat{R}(\eta), \text { for all } \eta\geq 0.
\end{align*}

\noi
Thus $\supp \hat{R}\subset (-\infty, 0]$. By the definition of
$R$, $(1-2D_{\xi})^{L/2}\hat{R}(\xi)=T(\xi)$. Since the left hand-side
is supported on $(-\infty,0]$ and the right hand-side is supported
on $[0,\infty)$, we deduce that $\supp T\subset {0}$. In particular, $T_{|\xi>0}=0$.
This yields that $\hat{u}_{|\xi>0}$ is a weak solution on $(0,\infty)$
of the following linear ordinary differential equation:
\begin{equation}\label{equation hat{u}}
\sum_{k=0}^{N}c_kv^{(k)}(\xi)=0.
\end{equation}
\noi
Then, by \cite[Theorem 4.4.8, p.115]{Hormander}, we have that $\hat{u}_{|\xi>0}\in C^{N}((0,\infty))$,
$\hat{u}_{|\xi>0}$ is a classical solution of this equation
and therefore it is a linear combination of
\[\xi^{m-1}e^{q\xi}\]

\noi
where $q\in\C$ is a root of the polynomial $P(X)=\sum_{k=0}^{N}c_kX^{k}$ with multiplicity $m_q$,
$1\leq m\leq m_q$,
and $\sum_q m_{q}=N$.
Note that we must have $\text{Re} \, q<0$, because  $u\in L_+^{2}(\R)$.
Therefore we will denote $q=-ip$, with $\text{Im} \, p <0$ and obtain that
$\hat{u}(\xi)$ is a linear combination of $\xi^{m-1}e^{-ip\xi}$ for $\xi>0$. By the hypothesis $u\in L^2_+(\R)$, we obtain $\hat{u}(\xi)=0$ for $\xi\leq 0$. Hence for all $\xi\in\R$, $\hat{u}(\xi)$ is a linear combination of $\Big(\frac{1}{(x-p)^{m}}\Big)^\wedge(\xi)$, with $1\leq q\leq m_q$ and $\sum m_{q}=N$.
Thus $u\in\M(N')$ for some $N'\leq N$. If $N'<N$, then $(i)$ yields $\text{rk}(H_u)\leq N'$, which contradicts our assumption. In conclusion $u\in\M(N).$

Finally, when $u\in \M(N)$ we have $\text{rk}(H_u)=N$ and equation \eqref{eqn: Hu}, and thus \eqref{ran} follows.
\end{proof}
As a consequence of \eqref{ran} we make the following remark.
\begin{remark}\label{remark}
If $u\in \M(N)$, then $u\in\textup{Ran}\,H_u$.
\end{remark}
\section{Spectral properties of the operator $A_{u}$ for a traveling wave $u$}
Let us first recall the definition and the basic properties of the generalized wave operators, which are the main objects in scattering theory. We refer to chapter XI in \cite{Reed and Simon} for more details.

Let $A$ and $B$ be two self-adjoint operators on a Hilbert space $\mathcal{H}$. The basic principle of scattering theory is to compare the free dynamics corresponding to $e^{-iAt}$ and $e^{-iBt}$. The fact that $e^{-iBt}\phi$ "looks asymptotically free" as $t\to -\infty$, with respect to A, means that there exists $\phi_+\in \mathcal{H}$ such that
\begin{align*}
\lim_{t\to -\infty}\|e^{-iBt}\phi-e^{-itA}\phi_{+}\|=0
\intertext{or equivalently,}
\lim_{t\to -\infty}\|e^{iAt}e^{-itB}\phi-\phi_{+}\|=0.
\end{align*}
Hence, we reduced ourselves to the problem of the existence of a strong limit. Let $\mathcal{H}_{\text{ac}}(B)$ be the absolutely continuous subspace for $B$ and let $P_{\text{ac}}(B)$ be the orthogonal projection onto this subspace. In the definition of the generalized wave operators we have $\phi\in\mathcal{H}_{\text{ac}}(B)$.

We say that the generalized wave operators exist if the following strong limits exist:
\begin{equation}\label{eqn: wave operators}
\Omega^{\pm}(A,B)=\lim_{t\to\mp\infty}e^{itA}e^{-itB}P_{\text{ac}}(B).
\end{equation}

\noi
The wave operators $\Omega^{\pm}(A,B)$ are partial isometries with initial subspace $\mathcal{H}_{\text{ac}}(B)$ and with values
in $\text{Ran }\Omega^{\pm}(A,B)$. Moreover, $\text{Ran}\, \Omega^{\pm}(A,B)\subset \mathcal{H}_{\text{ac}}(A)$. If $\text{Ran}\, \Omega^{\pm}(A,B)=\mathcal{H}_{\text{ac}}(A)$, we say that the generalized wave operators are complete.

Lastly, we note that the following equality holds:
\begin{equation}\label{id: wave operators}
A\Omega^{\pm}(A,B)=\Omega^{\pm}(A,B)B.
\end{equation}

\begin{lemma}\label{lemma: smothness}
If $u\in H^{1/2}_+$ is a traveling wave, then $u\in H_+^{s}(\R)$ for all $s\geq 0$.
In particular, by Sobolev embedding theorem, we have $u\in L^p(\R)$ for $2\leq p\leq\infty$.
\end{lemma}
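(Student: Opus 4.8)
The plan is to run an elliptic-type bootstrap directly on the traveling wave equation \eqref{eqn:u}. Since a nontrivial traveling wave necessarily has nonzero velocity (there are no nontrivial stationary waves, $c=0$, in $L^2_+$) and the case $u=0$ is trivial, I may assume $c\neq 0$ and rewrite \eqref{eqn:u} as
\begin{equation*}
Du=\frac{1}{c}\Pi(|u|^2u)-\frac{\omega}{c}\,u.
\end{equation*}
The entire argument rests on showing that the right-hand side is always at least as regular as $u$, so that $Du$, and hence $u$, gains one derivative at each step. Throughout, note that $u\in H^{1/2}_+\subset L^2_+$ has $\supp\hat u\subset[0,\infty)$, a property preserved under both differentiation and $\Pi$, so every gain of regularity keeps us inside $H^s_+$.

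For the base case I would start from $u\in H^{1/2}_+$ and invoke the endpoint Sobolev embedding $H^{1/2}(\R)\hookrightarrow L^p(\R)$, valid for every $2\leq p<\infty$ (this is the borderline case $s=\frac12$ in dimension one). In particular $u\in L^6(\R)$, so that $|u|^2u\in L^2(\R)$ with $\||u|^2u\|_{L^2}=\|u\|_{L^6}^3$. Since $\Pi$ is an $L^2$-bounded Fourier multiplier, the displayed right-hand side lies in $L^2$, whence $Du\in L^2$ and therefore $u\in H^1_+$.

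Next I would run the induction. Assume $u\in H^k_+$ for some integer $k\geq 1$. Because $H^k(\R)$ is a Banach algebra for $k\geq 1>\frac12$ and is stable under complex conjugation, the product $|u|^2u=u^2\bar u$ belongs to $H^k(\R)$. The projector $\Pi$, being a Fourier multiplier with symbol of modulus $\leq 1$, is bounded on every $H^s$, so $\Pi(|u|^2u)\in H^k_+$, while $\frac{\omega}{c}u\in H^k$ trivially. Hence $Du\in H^k$, which together with $u\in H^k$ gives $u\in H^{k+1}_+$. By induction $u\in H^k_+$ for all $k\in\N$, and since $H^k_+\subset H^s_+$ whenever $s\leq k$, we conclude $u\in H^s_+$ for all $s\geq 0$. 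The final $L^p$ statement then follows from $H^s(\R)\hookrightarrow L^\infty(\R)$ for $s>\frac12$ together with $u\in L^2$ and interpolation.

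I do not expect a genuine obstacle, as this is a standard bootstrap; the only point requiring slight care is the base case. One cannot bound $u$ in $L^\infty$ directly (the embedding $H^{1/2}\hookrightarrow L^\infty$ fails at the endpoint), so one must first use $H^{1/2}\hookrightarrow L^6$ to place $|u|^2u$ in $L^2$ and thereby reach $H^1$. Once $u\in H^1\hookrightarrow L^\infty$ is available, the algebra property of $H^k$ makes every subsequent step uniform and the induction proceeds without difficulty.
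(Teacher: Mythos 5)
Your proposal is correct and follows essentially the same bootstrap as the paper: the base case uses the endpoint embedding $H^{1/2}(\R)\hookrightarrow L^p(\R)$, $2\leq p<\infty$, to place $\Pi(|u|^2u)$ in $L^2_+$ and reach $H^1_+$, after which one gains a derivative per step from the equation (the paper does the inductive step by applying $D$ and Leibniz with the $L^\infty$ bound, which is exactly the content of the $H^k$ algebra property you invoke). Your explicit reduction to $c\neq 0$ via the nonexistence of stationary waves is a point the paper leaves implicit, and it introduces no circularity since that lemma's proof does not use smoothness.
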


\begin{proof}
Because $u\in H^{1/2}(\R)$, the Sobolev embedding theorem yields $u\in L^p(\R)$, for all $2\leq p<\infty$. Therefore $|u|^2u\in L^{2}(\R)$ and thus $\Pi(|u|^2u)\in L^2_+$.
Using equation \eqref{eqn:u}
\begin{equation*}
cDu+\omega u=\Pi(|u|^{2}u),
\end{equation*}

\noi
we deduce that $Du\in L^2_+$. Consequently, $u\in H^1_+$ and by Sobolev embedding theorem we have $u\in L^{\infty}(\R)$.
Then $u^2D\bar{u}, |u|^2Du\in L^{2}(\R)$. Applying the operator $D$ to both sides of equation \eqref{eqn:u},
we obtain $D^2u\in L^2(\R)$ and hence $u\in H^2_+$. Iterating this argument infinitely many times, the conclusion follows.
\end{proof}

\begin{proposition}
Let $u$ be a traveling wave.
Then, $(A_u+i)^{-1}-(D+i)^{-1}$ is a trace class operator.
\end{proposition}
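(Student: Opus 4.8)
The plan is to use the second resolvent identity to reduce the difference to a single product built from $T_{|u|^2}$, and then to exploit the factorization $|u|^2=\bar u\,u$ to rewrite that product as a product of \emph{two} Hilbert--Schmidt operators, which is automatically trace class. Throughout I use Lemma \ref{lemma: smothness}, which gives $u\in L^2\cap L^\infty$ (so $|u|^2\in L^2\cap L^\infty$ and $T_{|u|^2}$ is a bounded self-adjoint operator on $L^2_+$). Since $A_u$ and $D$ are self-adjoint on $L^2_+$ and differ by the bounded operator $A_u-D=-\frac1c T_{|u|^2}$, the resolvent identity yields
\begin{equation*}
(A_u+i)^{-1}-(D+i)^{-1}=\tfrac1c(A_u+i)^{-1}T_{|u|^2}(D+i)^{-1}.
\end{equation*}
Writing $M_f$ for the operator of multiplication by $f$ and using $T_{|u|^2}=\Pi M_{\bar u}M_u$ on $L^2_+$, I would split the right-hand side as $\frac1c\,\widetilde P\,Q$, where $Q=M_u(D+i)^{-1}$ and $\widetilde P=(A_u+i)^{-1}\Pi M_{\bar u}$.

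Next I would show that each of $Q$ and $\widetilde P$ is Hilbert--Schmidt. For $Q$, regarding $(D+i)^{-1}$ as convolution with $k_1:=\mathcal F^{-1}[(\xi+i)^{-1}]\in L^2(\R)$, the operator $Q$ has integral kernel $u(x)k_1(x-y)$, so $\nm{Q}_{HS}^2=\nm{u}_{L^2}^2\,\nm{k_1}_{L^2}^2<\infty$ (here only $u\in L^2$ is used, and restricting to $L^2_+$ only decreases the norm). For $\widetilde P$, I pass to the adjoint $\widetilde P^{\,*}=M_u(A_u-i)^{-1}$, which has the same Hilbert--Schmidt norm. From the relation $D=A_u+\frac1c T_{|u|^2}$ one checks that $D(A_u-i)^{-1}=1+i(A_u-i)^{-1}+\frac1c T_{|u|^2}(A_u-i)^{-1}$ is bounded; hence $(A_u-i)^{-1}$ maps $L^2_+$ into $H^1_+$ and $R:=\jb{D}(A_u-i)^{-1}$ is bounded on $L^2_+$. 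Factoring $(A_u-i)^{-1}=\jb{D}^{-1}R$ gives $\widetilde P^{\,*}=\bigl(M_u\jb{D}^{-1}\bigr)R$, and $M_u\jb{D}^{-1}$ is Hilbert--Schmidt since it has kernel $u(x)k_2(x-y)$ with $k_2=\mathcal F^{-1}[\jb{\xi}^{-1}]\in L^2(\R)$ and $u\in L^2$. Thus $\widetilde P^{\,*}$, and therefore $\widetilde P$, is Hilbert--Schmidt, and consequently the resolvent difference $\frac1c\widetilde P\,Q$ is a product of two Hilbert--Schmidt operators, hence trace class.

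The main obstacle is precisely the upgrade from Hilbert--Schmidt to trace class. The naive estimate, treating $(A_u+i)^{-1}$ merely as a bounded factor and bounding $T_{|u|^2}(D+i)^{-1}=\Pi M_{|u|^2}(D+i)^{-1}$ directly (which is Hilbert--Schmidt because $|u|^2\in L^2$ and $(\xi+i)^{-1}\in L^2$), yields only that the difference is Hilbert--Schmidt: a single resolvent supplies only one factor of frequency-decay. The device that produces the \emph{second} Hilbert--Schmidt factor is the splitting $|u|^2=\bar u\,u$, which pairs one copy of $u\in L^2$ with each resolvent. The one genuinely nontrivial point in carrying this out is verifying that $(A_u-i)^{-1}$ enjoys the same $\jb{D}^{-1}$ smoothing as $(D-i)^{-1}$; this is where the boundedness of the perturbation $T_{|u|^2}$ (equivalently $u\in L^\infty$) enters in an essential way.
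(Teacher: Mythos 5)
Your proof is correct, but it takes a genuinely different route from the paper's. Both arguments start from the second resolvent identity, both rest on Lemma \ref{lemma: smothness}, and both exploit the same key splitting $|u|^2=\bar u\,u$, pairing one $L^2$-copy of $u$ with a resolvent so that each resulting factor is Hilbert--Schmidt. The difference lies in how the \emph{second} Hilbert--Schmidt factor is produced. The paper iterates the resolvent identity once more: writing $(A_u+i)^{-1}-(D+i)^{-1}=L(A_u+i)^{-1}$ with $L=\frac1c\Pi(D+i)^{-1}|u|^2$ Hilbert--Schmidt, it substitutes this identity into itself to obtain
\begin{equation*}
(A_u+i)^{-1}-(D+i)^{-1}=L\circ L\circ(A_u+i)^{-1}+\tfrac1c\,\Pi(D+i)^{-1}u\circ\bar u(D+i)^{-1},
\end{equation*}
so that every smoothing factor comes from the \emph{free} resolvent $(D+i)^{-1}$, whose kernel is explicit, and the perturbed resolvent enters only as a bounded right factor. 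You instead apply the identity once and establish a mapping property of the \emph{perturbed} resolvent itself, namely that $\jb{D}(A_u-i)^{-1}$ is bounded (your operator $R$), which is what makes $(A_u+i)^{-1}\Pi M_{\bar u}$ Hilbert--Schmidt. That elliptic-regularity step is correct as you present it --- it uses only that $T_{|u|^2}$ is bounded, i.e. $u\in L^\infty$, which Lemma \ref{lemma: smothness} supplies --- and it buys a single clean factorization of the difference as a product of two Hilbert--Schmidt operators; the paper's iteration buys freedom from any regularity theory for $A_u$, at the cost of an extra term to estimate. Both are standard devices in trace-class scattering theory, and either one completes the proof.
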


\begin{proof}
We prove first that for all $f\in L^2(\R)$,
the operator $(D+i)^{-1}f$, defined on $L^2(\R)$ by
\[\big{(}(D+i)^{-1}f\big{)}h(x)= (D+i)^{-1}(fh)(x)\]

\noi
is Hilbert-Schmidt. Denote by $\mathcal{F}$ the Fourier transform.
Using the isomorphism of $L^2(\R)$ induced by the Fourier transform,
 we have that $(D+i)^{-1}f$ is a Hilbert-Schmidt operator if and only if
$\mathcal{F}(D+i)^{-1}f$ is a Hilbert-Schmidt operator.
The latter is an integral operator of kernel
$K(\xi,\eta)=\frac{1}{2\pi}\cdot\frac{1}{\xi+i}\hat{f}(\xi-\eta)$.
Indeed,
\begin{equation*}
\mathcal{F}\big{(}(D+i)^{-1}fh\big{)}(\xi)=\frac{1}{2\pi}\cdot\frac{1}{\xi+i}\ft{fh}(\xi)
=\frac{1}{2\pi}\int_{\R}\frac{1}{\xi+i}\hat{f}(\xi-\eta)\hat{h}(\eta)d\eta=\int_{\R}K(\xi,\eta)\hat{h}(\eta)d\eta.
\end{equation*}

\noi
Therefore, it is Hilbert-Schmidt if and only if $K(\xi,\eta)\in L^2_{\xi,\eta}(\R\times\R)$. By the change of variables $\eta\mapsto \zeta=\xi-\eta$ we have
\begin{align*}
\|K(\xi,\eta)\|^2_{L^2_{\xi,\eta}}=\frac{1}{4\pi^2}\int_{\R}\frac{d\xi}{\xi^2+1}\int_{\R}|\hat{f}(\zeta)|^2d\zeta=C\|f\|_{L^2}^2<\infty.
\end{align*}

Hence $(D+i)^{-1}f$ is a Hilbert-Schmidt operator and so is $\bar{f}(D+i)^{-1}$, its adjoint. According to Lemma \ref{lemma: smothness}, $u\in L^{\infty}(\R)$ and thus $|u|^2\in L^{2}(\R)$. Taking $f=|u|^{2}$ and $f=u$, we conclude that the operators
 $(D+i)^{-1}|u|^{2}$, $(D+i)^{-1}u$, and $\bar{u}(D+i)^{-1}$ are all Hilbert-Schmidt.

We write
\begin{align*}
(A_u+i)^{-1}-(D+i)^{-1}
& =(D+i)^{-1}(D-A_u)(A_u+i)^{-1}\\
& =\frac{1}{c}(D+i)^{-1}T_{|u|^{2}}(A_u+i)^{-1}\\
& =\frac{1}{c}\Pi(D+i)^{-1}|u|^{2}(A_u+i)^{-1}=L(A_u+i)^{-1},
\end{align*}

\noi
where $L=\frac{1}{c}\Pi(D+i)^{-1}|u|^{2}$. Note that $L$ is a Hilbert-Schmidt operator since it is the composition of the bounded operator $\frac{1}{c}\Pi:L^2(\R)\to L^2_+$ with the Hilbert-Schmidt operator $(D+i)^{-1}|u|^{2}$.
Finally, we write, using the latter formula twice
\begin{align*}
(A_u+i)^{-1}-(D+i)^{-1}
& =L(L(A_u+i)^{-1}+(D+i)^{-1})\\
& =L\circ L\circ (A_u+i)^{-1}+\frac{1}{c}\Pi(D+i)^{-1}u\circ\bar{u}(D+i)^{-1}.
\end{align*}

\noi
We obtain that $(A_u+i)^{-1}-(D+i)^{-1}$ is a trace class operator since the composition of two Hilbert-Schmidt operators is a trace class operator.
\end{proof}

\begin{corollary}
If u is a traveling wave, then the wave operators $\Omega^{\pm}(D,A_u)$ exist and are complete.
\end{corollary}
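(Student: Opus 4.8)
The plan is to reduce the claim to the trace-class resolvent criterion of Birman and Kuroda, so that the analytic work already carried out in the preceding proposition immediately yields both the existence and the completeness of the wave operators.

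First I would record that $D$ and $A_u$ are self-adjoint operators on the \emph{same} Hilbert space $L^2_+(\R)$. The operator $D=-i\dx$ is self-adjoint with domain $H^1_+$, being unitarily equivalent via the Fourier transform to multiplication by $\xi\ge 0$ on $L^2(0,\infty)$; in particular its spectrum is purely absolutely continuous and $\mathcal{H}_{\text{ac}}(D)=L^2_+$. Since $u$ is a traveling wave, Lemma \ref{lemma: smothness} gives $u\in L^{\infty}(\R)$, hence $|u|^2\in L^{\infty}(\R)$ and the Toeplitz operator $T_{|u|^2}$ is bounded and self-adjoint, its symbol being real-valued. Therefore $A_u=D-\frac1c T_{|u|^2}$ is a bounded self-adjoint perturbation of $D$, and hence self-adjoint on the domain $H^1_+$ of $D$. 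Note also that $A_u$ maps $L^2_+$ into itself, so the two operators indeed act on the one space $L^2_+$.

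Next I would bring in the resolvent difference. By the preceding proposition, $(A_u+i)^{-1}-(D+i)^{-1}$ is trace class, and hence so is its negative $(D+i)^{-1}-(A_u+i)^{-1}$; thus the trace-class hypothesis is symmetric in the pair. By the Birman--Kuroda theorem, the resolvent form of the Kato--Rosenblum theorem obtained through the invariance principle (see chapter XI in \cite{Reed and Simon}), if the difference of the resolvents of two self-adjoint operators at a non-real point is trace class, then the wave operators associated with the pair exist and are complete. Applying this with $A=D$ and $B=A_u$ yields the existence and completeness of $\Omega^{\pm}(D,A_u)=\lim_{t\to\mp\infty}e^{itD}e^{-itA_u}P_{\text{ac}}(A_u)$; here completeness reads $\textup{Ran}\,\Omega^{\pm}(D,A_u)=\mathcal{H}_{\text{ac}}(D)=L^2_+$.

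I do not anticipate a genuine obstacle: the heart of the matter is the trace-class estimate already established, and the remaining content is the verification that $A_u$ is self-adjoint on $L^2_+$ together with the correct invocation of the abstract theorem. The only point requiring a little care is that one must pass from a trace-class condition on the resolvents (a bounded function of the operators) to a statement about the wave operators for the operators themselves; this transition is exactly what the invariance principle supplies and what the Birman--Kuroda formulation packages, so no separate argument is needed.
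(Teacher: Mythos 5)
Your proposal is correct and follows essentially the same route as the paper: the paper likewise deduces the corollary directly from the Kuroda--Birman theorem (Theorem XI.9 in \cite{Reed and Simon}), applied to the trace-class resolvent difference established in the preceding proposition. The additional details you supply (self-adjointness of $A_u$ as a bounded self-adjoint perturbation of $D$, and the symmetry of the trace-class condition under sign change) are sound and merely make explicit what the paper leaves implicit.
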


\begin{proof}
This easily follows from  Kuroda-Birman theorem that we state below \cite[Theorem XI.9]{Reed and Simon}:

Let $A$ and $B$ be two self-adjoint operators on a Hilbert space such that
$(A+i)^{-1}-(B+i)^{-1}$ is a trace class operator. Then $\Omega^{\pm}(A,B)$ exist and are complete.
\end{proof}

\begin{corollary}\label{cor sigma ac}
If u is a traveling wave, then $\sigma_{\textup{ac}}(A_{u})=[0,+\infty)$.
\end{corollary}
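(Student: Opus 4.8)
The plan is to read off the absolutely continuous spectrum of $A_u$ from that of $D$, using the completeness of the wave operators $\Omega^{\pm}(D,A_u)$ supplied by the previous corollary. \emph{First}, I would identify $\sigma_{\textup{ac}}(D)$ on $L^2_+$. Conjugating by the Fourier transform, $D=-i\dx$ becomes multiplication by $\xi$ on $L^2((0,\infty))$, which is purely absolutely continuous. Hence $\mathcal{H}_{\text{ac}}(D)=L^2_+$ and $\sigma_{\textup{ac}}(D)=[0,+\infty)$.

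\emph{Next}, I would invoke the general properties of wave operators recalled at the beginning of this section. From the previous corollary, $\Omega^{\pm}(D,A_u)$ exist and are complete. Thus each is a partial isometry with initial subspace $\mathcal{H}_{\text{ac}}(A_u)$ whose range is contained in $\mathcal{H}_{\text{ac}}(D)$, and completeness upgrades this to $\textup{Ran}\,\Omega^{\pm}(D,A_u)=\mathcal{H}_{\text{ac}}(D)$. Consequently $\Omega^{\pm}(D,A_u)$ restricts to a \emph{unitary} map from $\mathcal{H}_{\text{ac}}(A_u)$ onto $\mathcal{H}_{\text{ac}}(D)=L^2_+$.

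\emph{Then}, I would use the intertwining identity \eqref{id: wave operators}, which here reads $D\,\Omega^{\pm}(D,A_u)=\Omega^{\pm}(D,A_u)\,A_u$. Combined with the unitarity from the preceding step, this exhibits the restriction $A_u|_{\mathcal{H}_{\text{ac}}(A_u)}$ as unitarily equivalent to $D|_{L^2_+}$. Since unitarily equivalent self-adjoint operators have identical spectra, I would conclude $\sigma_{\textup{ac}}(A_u)=\sigma_{\textup{ac}}(D)=[0,+\infty)$.

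The calculation is essentially bookkeeping; the only point demanding care is the passage from the intertwining relation to a genuine unitary equivalence of the two absolutely continuous parts, which is exactly where completeness is used (it is what makes $\Omega^{\pm}$ unitary between the a.c. subspaces rather than merely a partial isometry). The substantive work has already been absorbed into the Kuroda-Birman machinery, so no real obstacle remains.
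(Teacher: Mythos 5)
Your proposal is correct and follows essentially the same route as the paper: completeness of $\Omega^{\pm}(D,A_u)$ makes them isometries from $\mathcal{H}_{\text{ac}}(A_u)$ onto $\mathcal{H}_{\text{ac}}(D)=L^2_+$, and the intertwining relation \eqref{id: wave operators} then conjugates $A_u|_{\mathcal{H}_{\text{ac}}(A_u)}$ to $D|_{L^2_+}$, giving $\sigma_{\text{ac}}(A_u)=\sigma_{\text{ac}}(D)=[0,+\infty)$. The only difference is that you spell out the (elementary) computation of $\sigma_{\text{ac}}(D)$ via the Fourier transform, which the paper takes for granted.
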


\begin{proof}
Since $\Omega^{\pm}(D,A_u)$ are complete, it results that they are isometries
from $\mathcal{H}_{\text{ac}}(A_u)$ onto $\mathcal{H}_{\text{ac}}(D)=L^2_+$. By \eqref{id: wave operators}, we then have
\[{A_u}_{|_{\mathcal{H}_{\text{ac}}(A_u)}}
=[\Omega^{\pm}(D,A_u)_{|_{\mathcal{H}_{\text{ac}}(A_u)}}]^{-1}D\Omega^{\pm}(D,A_u)_{|_{\mathcal{H}_{\text{ac}}(A_u)}}.\]
Consequently, $\sigma_{\text{ac}}(A_{u})=\sigma_{\text{ac}}(D)=[0,+\infty)$.
\end{proof}

Our main goal in the following is to prove $\mathcal{H}_{\text{ac}}(A_{u})\subset\text{Ker}\, H_{u}$.
As we see below, it is enough to prove that $\big{[}\Omega^{+}(D,A_u)H^{2}_u\big{]}(\mathcal{H}_{\text{ac}}(A_{u}))=0$.

\begin{lemma}\label{H_u is H-S}
The operator $H_u$ is a Hilbert-Schmidt operator on $L^2_+(\R)$ of Hilbert-Schmidt norm $\frac{1}{\sqrt{2\pi}}\|u\|_{\dot{H}^{1/2}_+}$.
\end{lemma}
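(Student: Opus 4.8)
The plan is to compute the Hilbert-Schmidt norm directly from the Fourier-side representation of $H_u$ already recorded in \eqref{H_u Fourier}. First I would recall that, with the convention $(u,v)=\int_{\R}u\bar v$, Plancherel gives $\|h\|_{L^2}^2=\frac{1}{2\pi}\|\hat h\|_{L^2}^2$, so the normalized map $h\mapsto\frac{1}{\sqrt{2\pi}}\hat h$ is a unitary isomorphism from $L^2_+$ onto $L^2(0,\infty)$. Since the Hilbert-Schmidt norm $\|T\|_{HS}^2=\sum_j\|Te_j\|^2$ is invariant under unitary conjugation — and this invariance persists for the $\C$-antilinear $T=H_u$, because for any unitary $U$ and any orthonormal basis $\{e_j\}$ the vectors $\{U^\ast e_j\}$ again form an orthonormal basis — it suffices to compute the Hilbert-Schmidt norm of the unitarily equivalent antilinear integral operator on $L^2(0,\infty)$ given by $f\mapsto\int_0^\infty K(\xi,\eta)\overline{f(\eta)}\,d\eta$ with kernel $K(\xi,\eta)=\frac{1}{2\pi}\hat u(\xi+\eta)$, which \eqref{H_u Fourier} exhibits.

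Second, I would show that for such an antilinear integral operator the Hilbert-Schmidt norm equals the $L^2$-norm of its kernel. Fixing an orthonormal basis $\{\phi_j\}$ of $L^2(0,\infty)$ and writing $\int_0^\infty K(\xi,\eta)\overline{\phi_j(\eta)}\,d\eta=(K(\xi,\cdot),\phi_j)$, Parseval's identity gives $\sum_j|(K(\xi,\cdot),\phi_j)|^2=\|K(\xi,\cdot)\|_{L^2(0,\infty)}^2$ for each fixed $\xi$; integrating in $\xi$ and interchanging sum and integral (justified by monotone convergence, all terms being nonnegative) yields $\|H_u\|_{HS}^2=\frac{1}{4\pi^2}\int_0^\infty\!\int_0^\infty|\hat u(\xi+\eta)|^2\,d\eta\,d\xi$.

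Finally, I would evaluate this double integral by the change of variables $s=\xi+\eta$ in the inner integration (keeping $\xi$), whose Jacobian is $1$ and which maps the first quadrant onto $\{s\ge 0,\ 0\le\xi\le s\}$; after applying Fubini the inner integration produces $\int_0^\infty s\,|\hat u(s)|^2\,ds$. Comparing with the definition of the homogeneous norm, $\|u\|_{\dot H^{1/2}_+}^2=\frac{1}{2\pi}\int_0^\infty\xi\,|\hat u(\xi)|^2\,d\xi$, one obtains $\|H_u\|_{HS}^2=\frac{1}{4\pi^2}\cdot 2\pi\|u\|_{\dot H^{1/2}_+}^2=\frac{1}{2\pi}\|u\|_{\dot H^{1/2}_+}^2$, which is the claimed identity; finiteness for $u\in H^{1/2}_+$ then gives the Hilbert-Schmidt property. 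I expect the only real subtlety to be bookkeeping: tracking the factors of $2\pi$ coming from the Plancherel normalization and handling the complex conjugation correctly in the Parseval step, that is, making sure the Hilbert-Schmidt norm is genuinely well defined and basis-independent for the antilinear operator $H_u$.
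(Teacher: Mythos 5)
Your proof is correct and follows essentially the same route as the paper: both reduce the claim to computing the $L^2$ norm of the Fourier-side kernel $\frac{1}{2\pi}\hat{u}(\xi+\eta)$ over $(0,\infty)^2$ starting from \eqref{H_u Fourier}, and both evaluate that double integral by the substitution $s=\xi+\eta$ and Fubini to obtain $\frac{1}{2\pi}\|u\|_{\dot{H}^{1/2}_+}^2$. The only cosmetic difference is that the paper passes through the physical-space integral kernel and two-dimensional Plancherel, whereas you conjugate by the normalized Fourier transform and verify the Hilbert--Schmidt/kernel-norm identity for the $\C$-antilinear operator directly via Parseval, which treats the antilinearity a bit more explicitly than the paper does.
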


\begin{proof}
Let us denote by $\|T\|_{HS}$ the Hilbert-Schmidt norm of a Hilbert-Schmidt operator $T$.
By \eqref{H_u Fourier}, we have
\begin{equation*}
\widehat{H_{u}(h)}(\xi)
=\frac{1}{2\pi}\pmb{1}_{\xi\geq 0}\int_{0}^{\infty}\hat{u}(\xi+\eta)\overline{\hat{h}}(\eta)d\eta.
\end{equation*}

\noi
Then, we obtain
\begin{align*}
H_{u}(h)(x)
&=\frac{1}{4\pi^2}\int_{0}^{\infty}\int_{0}^{\infty}e^{ix\xi}\hat{u}(\xi+\eta)\overline{\hat{h}}(\eta)d\eta d\xi\\
&=\frac{1}{4\pi^2}\int_{\R}\bigg{(}\int_{0}^{\infty}
\int_{0}^{\infty}e^{ix\xi}e^{iy\eta}\hat{u}(\xi+\eta)d\eta d\xi\bigg{)}\bar{h}(y)dy.
\end{align*}

\noi
Using the fact that the Hilbert-Schmidt norm of an operator is equal to the norm of its integral kernel, Plancherel's formula, and Fubini's theorem, we have
\begin{align*}
\|H_{u}(h)\|^2_{HS}
&=\frac{1}{16\pi^4}\bigg\|\int_{0}^{\infty}\int_{0}^{\infty}e^{ix\xi}e^{iy\eta}\hat{u}(\xi+\eta)d\eta d\xi\bigg\|^2_{L^2_{x,y}}
=\frac{1}{4\pi^2}\|\pmb{1}_{\xi\geq 0}\pmb{1}_{\eta\geq 0}\hat{u}(\xi+\eta)\|^2_{L^2_{\eta,\xi}}\\
&=\frac{1}{4\pi^2}\int_{0}^{\infty}\int_{0}^{\infty}|\hat{u}(\xi+\eta)|^2d\eta d\xi=\frac{1}{4\pi^2}\int_{0}^{\infty}\int_{\xi}^{\infty}|\hat{u}(\zeta)|^2d\zeta d\xi\\
&=\frac{1}{4\pi^2}\int_{0}^{\infty}\bigg{(}\int_{0}^{\zeta}d\xi\bigg{)}|\hat{u}(\zeta)|^2d\zeta
=\frac{1}{4\pi^2}\int_{0}^{\infty}\zeta|\hat{u}(\zeta)|^2d\zeta =\frac{1}{2\pi}\|u\|_{\dot{H}^{1/2}}^2.
\end{align*}
\end{proof}

\begin{lemma}\label{lemma: Ker Hu^2}
$\textup{ Ker}\,H_u^2=\textup{Ker}\,H_u$. Moreover, if $\textup{ Ran} H_u$ is finite dimensional, then $\textup{ Ran}\,H_u^2=\textup{Ran}\,H_u$.
\end{lemma}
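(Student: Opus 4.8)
The plan is to exploit that, although $H_u$ is $\C$-antilinear, its square $H_u^2$ is $\C$-linear and is in fact a positive self-adjoint operator, after which both assertions become short spectral arguments. First I would record the basic computation: using the symmetry \eqref{sym H_u} with $g_1=H_uh_1$, one gets $(H_u^2h_1,h_2)=(H_u(H_uh_1),h_2)=(H_uh_2,H_uh_1)$, and in particular
\[
(H_u^2 h,h)=(H_u h,H_u h)=\|H_u h\|^2\geq 0,
\]
so that $H_u^2$ is self-adjoint and positive.

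For the first assertion, the inclusion $\text{Ker}\,H_u\subseteq\text{Ker}\,H_u^2$ is immediate from $H_u^2h=H_u(H_uh)$. Conversely, if $H_u^2h=0$, then the identity above forces $\|H_u h\|^2=(H_u^2h,h)=0$, hence $H_uh=0$. This gives $\text{Ker}\,H_u^2=\text{Ker}\,H_u$ with no finiteness hypothesis.

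For the second assertion, I would assume $\text{Ran}\,H_u$ is finite-dimensional, hence closed, and first identify $(\text{Ker}\,H_u)^\perp=\text{Ran}\,H_u$. Indeed, by \eqref{sym H_u} a vector $h$ satisfies $h\perp\text{Ran}\,H_u$ iff $(H_uh,g)=(H_ug,h)=0$ for all $g$, i.e. iff $h\in\text{Ker}\,H_u$; thus $(\text{Ran}\,H_u)^\perp=\text{Ker}\,H_u$, and taking orthogonal complements (legitimate since $\text{Ran}\,H_u$ is closed) yields the claim. Now $H_u^2$ is self-adjoint with $\text{Ker}\,H_u^2=\text{Ker}\,H_u$ by the first part, so its restriction to $(\text{Ker}\,H_u)^\perp=\text{Ran}\,H_u$ is injective; moreover this restriction maps $\text{Ran}\,H_u$ into itself because $\text{Ran}\,H_u^2\subseteq\text{Ran}\,H_u$. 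Since an injective linear self-map of a finite-dimensional space is surjective, I obtain $\text{Ran}\,H_u=H_u^2(\text{Ran}\,H_u)\subseteq\text{Ran}\,H_u^2\subseteq\text{Ran}\,H_u$, forcing $\text{Ran}\,H_u^2=\text{Ran}\,H_u$.

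The only point requiring genuine care is the interplay between the $\C$-antilinearity of $H_u$ and the orthogonality/self-adjointness manipulations: the symmetry \eqref{sym H_u} must be used in place of the usual adjoint identity, and one must keep the linear object $H_u^2$ (rather than $H_u$ itself) in the foreground to stay within standard Hilbert-space arguments. The finite-dimensionality enters in exactly one essential place, namely to guarantee that $\text{Ran}\,H_u$ is closed so that the orthogonal decomposition $L^2_+=\text{Ker}\,H_u\oplus\text{Ran}\,H_u$ holds and the rank argument ``injective implies surjective'' is available.
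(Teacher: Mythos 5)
Your proof is correct. The first assertion is argued exactly as in the paper: the identity \eqref{sym H_u} gives $(H_u^2h,h)=\|H_uh\|^2$, so $\textup{Ker}\,H_u^2\subset\textup{Ker}\,H_u$, the reverse inclusion being trivial. For the second assertion your route differs from the paper's in its final mechanism. The paper stays with orthogonal complements throughout: from \eqref{sym H_u} it gets $\textup{Ker}\,H_u=(\textup{Ran}\,H_u)^{\perp}$ and, since $H_u^2$ is self-adjoint, $\textup{Ker}\,H_u^2=(\textup{Ran}\,H_u^2)^{\perp}$; the kernel equality then forces $\overline{\textup{Ran}\,H_u^2}=\overline{\textup{Ran}\,H_u}$, and finite rank is used only to conclude that both ranges are closed, hence equal. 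You instead restrict the $\C$-linear operator $H_u^2$ to $\textup{Ran}\,H_u$, check that this restriction is injective (because $\textup{Ran}\,H_u\cap\textup{Ker}\,H_u=\{0\}$ by the orthogonality relation) and maps $\textup{Ran}\,H_u$ into itself, and invoke the fact that an injective endomorphism of a finite-dimensional space is surjective. Both arguments rest on the same two pillars ($\textup{Ker}\,H_u=(\textup{Ran}\,H_u)^{\perp}$ and the first assertion), so the difference is modest, but each buys something: the paper's version yields the stronger intermediate fact $\overline{\textup{Ran}\,H_u^2}=\overline{\textup{Ran}\,H_u}$ with no finiteness hypothesis at all, while your dimension-count is more elementary and never needs the identity $\textup{Ker}\,H_u^2=(\textup{Ran}\,H_u^2)^{\perp}$. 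One small economy you could make: the step deducing $(\textup{Ker}\,H_u)^{\perp}=\textup{Ran}\,H_u$ from closedness is not actually needed, since injectivity of the restriction only requires $\textup{Ran}\,H_u\cap\textup{Ker}\,H_u=\{0\}$, which follows from $\textup{Ker}\,H_u=(\textup{Ran}\,H_u)^{\perp}$ alone; finite-dimensionality is then used solely in the rank argument.
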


\begin{proof}
Let $f\in \text{Ker}\,H_u^2$. Then, by \eqref{sym H_u},
\begin{equation}\notag
(H_{u}(h_{1}),h_{2})=(H_{u}(h_{2}),h_{1}) \text{ for all } h_1,h_2\in L^2_+,
\end{equation}

\noi
we have
\[\|H_uf\|_{L^2}^2=(H_uf,H_uf)=(H_u^2f,f)=0\]

\noi
and thus $H_u f=0$.
Hence, $\text{Ker}\,H_u^2\subset\text{Ker}H_u\,$.
Therefore, we obtain $\text{Ker}\,H_u^2=\text{Ker}H_u$
since the inverse inclusion is obvious.

The identity \eqref{sym H_u}
yields also $\text{Ker} H_u=(\text{Ran} H_u)^{\perp}$. Moreover, it implies that $H_u^2$ is a self-adjoint operator and therefore, $\text{Ker} H^2_u=(\text{Ran} H^2_u)^{\perp}$. Hence, we obtain
\[(\text{Ran}H_u^2)^{\perp}=(\text{Ran}H_u)^{\perp}.\]

\noi
Taking the orthogonal complement of both sides, this yields
\[\overline{\text{Ran}H_u^2}=\overline{\text{Ran}H_u.}\]

\noi
If $\textup{Ran} H_u$ is finite dimensional, so is $\text{Ran}H_u^2$, since $\text{Ran}H_u^2\subset \text{Ran}H_u$.
Thus, $\text{Ran}H^2_u$ and $\text{Ran}H_u$ are closed.
Hence, we have $\text{Ran}H^2_u=\text{Ran}H_u$.
\end{proof}

\begin{lemma}\label{Ran Hu}
If $u$ is a traveling wave, then
\begin{equation}\label{eqn:comutativity A and H_u^2}
A_uH_u^2=H_u^2 A_u.
\end{equation}

\noi
Consequently, if $\textup{ Ran} H_u$ is finite dimensional, then $A_u(\textup{Ran}\, H_u)\subset \textup{Ran}\, H_{u}$.
\end{lemma}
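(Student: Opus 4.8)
The plan is to obtain the commutation relation \eqref{eqn:comutativity A and H_u^2} as a purely algebraic consequence of the traveling-wave identity \eqref{identity for traveling waves}, and then to read off the invariance of $\textup{Ran}\,H_u$ using Lemma \ref{lemma: Ker Hu^2}. First I would rewrite \eqref{identity for traveling waves} in the solved form
\[
A_uH_u=-H_uA_u-\tfrac{\omega}{c}H_u-\tfrac{1}{c}H_u^{3},
\]
and then compute $A_uH_u^2$ by substituting this relation twice:
\begin{align*}
A_uH_u^2
&=(A_uH_u)H_u=\Big(-H_uA_u-\tfrac{\omega}{c}H_u-\tfrac{1}{c}H_u^{3}\Big)H_u\\
&=-H_u(A_uH_u)-\tfrac{\omega}{c}H_u^{2}-\tfrac{1}{c}H_u^{4}\\
&=-H_u\Big(-H_uA_u-\tfrac{\omega}{c}H_u-\tfrac{1}{c}H_u^{3}\Big)-\tfrac{\omega}{c}H_u^{2}-\tfrac{1}{c}H_u^{4}\\
&=H_u^{2}A_u+\tfrac{\omega}{c}H_u^{2}+\tfrac{1}{c}H_u^{4}-\tfrac{\omega}{c}H_u^{2}-\tfrac{1}{c}H_u^{4}=H_u^{2}A_u.
\end{align*}
The point is that the two extra terms cancel identically, so \eqref{eqn:comutativity A and H_u^2} drops out with no further input beyond \eqref{identity for traveling waves} and associativity.

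The only genuine subtlety, and what I expect to be the main obstacle, is the domain bookkeeping for the unbounded operator $A_u=D-\tfrac{1}{c}T_{|u|^2}$; every other step is formal. Since $u$ is a traveling wave, Lemma \ref{lemma: smothness} gives $u\in L^\infty$, so $T_{|u|^2}$ is bounded and $D(A_u)=D(D)=H^1_+$. The Hankel operator $H_u$ is bounded (indeed Hilbert--Schmidt, Lemma \ref{H_u is H-S}), but to make the composition $A_uH_u$ meaningful I must check that $H_u$ preserves $H^1_+$. This is itself a consequence of \eqref{identity for traveling waves}: applied to $f\in H^1_+$, all terms $H_uA_uf$, $H_uf$, $H_u^3f$ lie in $L^2_+$, so $A_u(H_uf)=-(H_uA_uf+\tfrac{\omega}{c}H_uf+\tfrac{1}{c}H_u^3f)\in L^2_+$, which forces $H_uf\in D(A_u)=H^1_+$. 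Hence $H_u(H^1_+)\subset H^1_+$ and therefore $H_u^2(H^1_+)\subset H^1_+=D(A_u)$, so every composition appearing in the display above is legitimately defined on $H^1_+$, and \eqref{eqn:comutativity A and H_u^2} holds as an identity of operators on $H^1_+$.

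For the consequence, assume $\textup{Ran}\,H_u$ is finite dimensional. Then Lemma \ref{lemma: Ker Hu^2} yields $\textup{Ran}\,H_u^2=\textup{Ran}\,H_u$, and by Theorem \ref{th: Kronecker} (together with Remark \ref{remark}) this range is spanned by functions of the form $\tfrac{1}{(z-p_j)^m}$, which are smooth and decaying, hence lie in $H^s_+$ for every $s\ge 0$; in particular $\textup{Ran}\,H_u\subset H^1_+=D(A_u)$. Given $v\in\textup{Ran}\,H_u=\textup{Ran}\,H_u^2$, I would choose $w\in\textup{Ran}\,H_u\subset H^1_+$ with $v=H_u^2w$ (possible since $H_u^2$ maps the finite-dimensional space $\textup{Ran}\,H_u$ onto $\textup{Ran}\,H_u^2=\textup{Ran}\,H_u$). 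Then $A_uw\in L^2_+$ and, by \eqref{eqn:comutativity A and H_u^2},
\[
A_uv=A_uH_u^2w=H_u^2A_uw\in\textup{Ran}\,H_u^2=\textup{Ran}\,H_u,
\]
which is precisely the asserted invariance $A_u(\textup{Ran}\,H_u)\subset\textup{Ran}\,H_u$.
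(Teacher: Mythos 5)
Your proof is correct and follows essentially the same route as the paper: the paper's own (two-sentence) proof likewise obtains \eqref{eqn:comutativity A and H_u^2} by composing the identity \eqref{identity for traveling waves} with $H_u$ twice (the cancellation working because $\omega/c$ and $1/c$ are real, so the antilinearity of $H_u$ causes no conjugation issues), and deduces the invariance of $\textup{Ran}\,H_u$ from $\textup{Ran}\,H_u^2=\textup{Ran}\,H_u$ in Lemma \ref{lemma: Ker Hu^2}. Your additional domain bookkeeping and the surjectivity of $H_u^2$ on $\textup{Ran}\,H_u$ are details the paper leaves implicit, and they are handled correctly.
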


\begin{proof}
The commutativity relation \eqref{eqn:comutativity A and H_u^2} is a consequence of identity \eqref{identity for traveling waves}. The second statement then follows by Lemma \ref{lemma: Ker Hu^2}, $\text{Ran}\,H_u^2=\text{Ran}\,H_u$.
\end{proof}

It is a classical fact that if $A$ and $B$ are two self-adjoint operators on a Hilbert space $\mathcal{H}$ such that $AB=BA$, then $B\big(\mathcal{H}_{\text{ac}}(A)\big)\subset\mathcal{H}_{\text{ac}}(A)$. For the sake of completeness, we prove it here in the case of the operators $A_u$ and $H_u^2$.

\begin{lemma}\label{lemma:commutativity H_u^2 and P_{ac}}
$H_u^2\mathcal{H}_{\textup{ac}}(A_u)\subset \mathcal{H}_{\textup{ac}}(A_u)$.
\end{lemma}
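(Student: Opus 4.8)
The plan is to upgrade the purely algebraic commutation relation \eqref{eqn:comutativity A and H_u^2} to a commutation of $H_u^2$ with the \emph{spectral projections} of $A_u$, and then to read off the conclusion directly from the definition of the absolutely continuous subspace. Throughout I use that $A_u$ is self-adjoint on $D(A_u)=H^1_+$ and that $H_u^2$ is a bounded self-adjoint operator, as recorded in the proof of Lemma \ref{lemma: Ker Hu^2}.

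First I would convert \eqref{eqn:comutativity A and H_u^2} into a resolvent identity. Reading that relation as the statement that $H_u^2$ maps $D(A_u)$ into itself with $A_uH_u^2f=H_u^2A_uf$ for $f\in D(A_u)$, I take an arbitrary $g\in L^2_+$ and set $f=(A_u+i)^{-1}g\in D(A_u)$. Then $H_u^2f\in D(A_u)$ and
\begin{equation*}
(A_u+i)H_u^2f=H_u^2A_uf+iH_u^2f=H_u^2(A_u+i)f=H_u^2g,
\end{equation*}
so that $H_u^2(A_u+i)^{-1}g=(A_u+i)^{-1}H_u^2g$; taking adjoints (using that $H_u^2$ is self-adjoint and $((A_u+i)^{-1})^*=(A_u-i)^{-1}$) gives commutation with $(A_u-i)^{-1}$ as well. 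Next I would pass from the resolvents to the full Borel functional calculus: since the $*$-algebra generated by $(x+i)^{-1}$ and $(x-i)^{-1}$ is dense in $C_0(\R)$ (Stone--Weierstrass on the one-point compactification), commutation holds with $f(A_u)$ for every $f\in C_0(\R)$, and then, approximating indicator functions by bounded pointwise limits and using that $f_n(A_u)\to f(A_u)$ strongly whenever $f_n\to f$ boundedly pointwise, I obtain $H_u^2E_{A_u}(\Omega)=E_{A_u}(\Omega)H_u^2$ for every Borel set $\Omega\subset\R$.

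With this in hand the conclusion is immediate. Let $\psi\in\mathcal{H}_{\textup{ac}}(A_u)$ and let $\Omega\subset\R$ be any Lebesgue-null Borel set. By definition of the absolutely continuous subspace the spectral measure $\mu_\psi(\Omega)=\|E_{A_u}(\Omega)\psi\|^2$ vanishes, hence $E_{A_u}(\Omega)\psi=0$, and therefore
\begin{equation*}
\mu_{H_u^2\psi}(\Omega)=\|E_{A_u}(\Omega)H_u^2\psi\|^2=\|H_u^2E_{A_u}(\Omega)\psi\|^2=0.
\end{equation*}
Since this holds for every Lebesgue-null set, the spectral measure $\mu_{H_u^2\psi}$ is absolutely continuous with respect to Lebesgue measure, i.e. $H_u^2\psi\in\mathcal{H}_{\textup{ac}}(A_u)$, which is the claim.

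The delicate point is the functional-calculus step: one must pass from the relation \eqref{eqn:comutativity A and H_u^2}, which a priori lives only on $D(A_u)$ and involves the unbounded operator $A_u$, to a genuine commutation with the bounded spectral projections $E_{A_u}(\Omega)$. This is exactly where boundedness and self-adjointness of $H_u^2$ are essential, since they guarantee that commutation survives the strong limits that define $f(A_u)$ through the spectral theorem. Once that is secured, the remaining argument is nothing more than unwinding the definition of $\mathcal{H}_{\textup{ac}}(A_u)$.
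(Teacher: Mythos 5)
Your proof is correct and follows essentially the same route as the paper: both reduce the claim to the fact that $H_u^2$ commutes with the Borel functional calculus of $A_u$ and then conclude that the spectral measure of $H_u^2\psi$ is absolutely continuous (the paper bounds $\mu_{H_u^2\phi}(E)\leq\sqrt{\mu_\phi(E)}\,\|H_u^4\phi\|_{L^2}$ by Cauchy--Schwarz, while you observe directly that $E_{A_u}(\Omega)H_u^2\psi=H_u^2E_{A_u}(\Omega)\psi=0$ on Lebesgue-null sets). The only substantive difference is that you rigorously justify the passage from the relation \eqref{eqn:comutativity A and H_u^2} to commutation with the spectral projections (resolvents, then Stone--Weierstrass, then bounded pointwise limits), a step the paper applies without comment when it writes $(H_u^2\phi,f(A_u)H_u^2\phi)=(H_u^2\phi,H_u^2f(A_u)\phi)$ for $f=\pmb{1}_E$.
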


\begin{proof}
As we see below, the inclusion follows if we prove that $\mu_{H_u^{2}\phi}\ll\mu_{\phi}$ for all $\phi\in L^2_+$, where the measures above are the spectral measures with respect to the operator $A_u$, corresponding respectively to $H_u^2\phi$ and $\phi$.

Let $E\subset\R$ be a measurable set and $f=\pmb{1}_{E}$.
Then, by \eqref{eqn:comutativity A and H_u^2} and the Cauchy-Schwarz inequality we have
\begin{align*}
\mu_{H_{u}^2\phi}(E)
& =\int_{\R}fd\mu_{H_{u}^2\phi}
 =(H_{u}^2\phi,f(A_{u})H_{u}^2\phi)\\
& =(H_{u}^2\phi,H_{u}^2f(A_{u})\phi)=(H_{u}^4\phi,f(A_{u})\phi)\\
& \leq \sqrt{(f(A_{u})\phi,f(A_{u})\phi)}\|H_{u}^4\phi\|_{L^{2}}
=\sqrt{(\phi,f(A_{u})\phi)}\|H_{u}^4\phi\|_{L^{2}}\\
& =\sqrt{\mu_{\phi}(E)}\|H_{u}^4\phi\|_{L^{2}}.
\end{align*}

\noi
Therefore, $\mu_{H_u^{2}\phi}\ll\mu_{\phi}$.

Let us denote by $m$ the Lebesgue measure on $\R$.
 If $\phi\in \mathcal{H}_{\text{ac}}(A_u)$, then $\mu_{\phi}\ll m$ and thus $\mu_{H_u^{2}\phi}\ll m$. Hence, $H^2_u\mathcal{H}_{\text{ac}}(A_u)\subset\mathcal{H}_{\text{ac}}(A_u)$.
\end{proof}

\begin{proposition}\label{prop:H_ac subset Ker H_u}
If u is a traveling wave, then $\mathcal{H}_{\textup{ac}}(A_u)\subset \textup{Ker}\, H_u$.
\end{proposition}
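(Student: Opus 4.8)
The plan is to follow the reduction already announced in the text: it suffices to prove that $\Omega^+(D,A_u)H_u^2\phi=0$ for every $\phi\in\mathcal{H}_{\textup{ac}}(A_u)$. Indeed, by Lemma~\ref{lemma:commutativity H_u^2 and P_{ac}} we have $H_u^2\phi\in\mathcal{H}_{\textup{ac}}(A_u)$, and $\Omega^+(D,A_u)$ is a partial isometry with initial subspace $\mathcal{H}_{\textup{ac}}(A_u)$, hence isometric there; thus $\|H_u^2\phi\|=\|\Omega^+(D,A_u)H_u^2\phi\|=0$, so $\phi\in\textup{Ker}\,H_u^2=\textup{Ker}\,H_u$ by Lemma~\ref{lemma: Ker Hu^2}. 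This yields the claimed inclusion.

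The analytic core is the assertion that
\[
\lim_{t\to-\infty}\|H_u e^{-itA_u}\phi\|_{L^2}=0 \qquad \text{for all } \phi\in\mathcal{H}_{\textup{ac}}(A_u).
\]
To prove it I would compare the full dynamics with the free one. Since the wave operator $\Omega^+(D,A_u)$ exists, $\|e^{-itA_u}\phi-e^{-itD}\Omega^+(D,A_u)\phi\|\to0$ as $t\to-\infty$; because $H_u$ is bounded it is therefore enough to show $H_u e^{-itD}g\to0$ strongly, where $g:=\Omega^+(D,A_u)\phi\in\textup{Ran}\,\Omega^+\subset L^2_+$. The multiplier $e^{-itD}$ acts on the Fourier side as multiplication by $e^{-it\xi}$, so for every fixed $h\in L^2_+$ the Riemann--Lebesgue lemma gives $(e^{-itD}g,h)\to0$; that is, $e^{-itD}g\rightharpoonup0$ weakly. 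Finally $H_u$ is compact (Hilbert--Schmidt by Lemma~\ref{H_u is H-S}), and by the symmetry \eqref{sym H_u} it sends weakly null families to weakly null families, so being compact it maps them to strongly null families; hence $H_u e^{-itD}g\to0$, which gives the displayed limit.

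With this in hand the conclusion is immediate. By \eqref{eqn:comutativity A and H_u^2} the bounded operator $H_u^2$ commutes with $A_u$, hence with its functional calculus, so $e^{-itA_u}H_u^2=H_u^2e^{-itA_u}$. Writing $H_u^2=H_u\circ H_u$ and using that $e^{itD}$ is unitary and $H_u$ bounded,
\[
\|e^{itD}e^{-itA_u}H_u^2\phi\| = \|e^{itD}H_u\big(H_ue^{-itA_u}\phi\big)\| \leq \|H_u\|\,\|H_ue^{-itA_u}\phi\| \xrightarrow[t\to-\infty]{}0,
\]
and therefore $\Omega^+(D,A_u)H_u^2\phi=\lim_{t\to-\infty}e^{itD}e^{-itA_u}H_u^2\phi=0$, which is exactly what the reduction required.

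The step I expect to be the main obstacle is the weak-to-strong passage under $H_u$: one must justify that an \emph{antilinear} compact operator still sends weakly convergent families to strongly convergent ones. This is where the symmetry relation \eqref{sym H_u} is essential, since for antilinear $H_u$ the usual ``bounded operators are weakly continuous'' argument must be replaced by the identity $(H_u f_t,h)=(H_u h,f_t)$, which shows $H_u f_t\rightharpoonup0$ whenever $f_t\rightharpoonup0$; compactness then upgrades weak to norm convergence by the standard subsequence-extraction argument. A secondary point to handle carefully is the interchange of the limit with $H_u$ via the wave-operator asymptotics, namely splitting $e^{-itA_u}\phi$ into its free profile $e^{-itD}\Omega^+(D,A_u)\phi$ plus an $L^2$-error that vanishes as $t\to-\infty$.
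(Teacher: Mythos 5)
Your proof is correct, and while it shares the paper's skeleton---the reduction to $\big[\Omega^+(D,A_u)H_u^2\big]\big(\mathcal{H}_{\textup{ac}}(A_u)\big)=0$ via Lemmas \ref{lemma: Ker Hu^2} and \ref{lemma:commutativity H_u^2 and P_{ac}} and the isometry of $\Omega^+(D,A_u)$ on $\mathcal{H}_{\textup{ac}}(A_u)$, plus the wave-operator splitting of the full dynamics into a free profile and a vanishing error---the mechanism that kills the main term is genuinely different. The paper first moves the dynamics onto the symbol through the translation identity \eqref{id}, $H_ue^{itD}=e^{itD}H_{\tau_t(u)}$, rewriting the relevant quantity as $H^2_{\tau_{-t}(u)}e^{itD}e^{-itA_u}P_{\textup{ac}}(A_u)f$, and then disposes of the main term by dominated convergence, using
\begin{equation*}
\|H_{\tau_{-t}(u)}\Omega^+(D,A_u)f\|_{L^2}^2\leq\int_{\R}|u(x+t)|^2\,|\Omega^{+}(D,A_u)f(x)|^2\,dx\longrightarrow 0,\qquad t\to-\infty,
\end{equation*}
which requires the pointwise decay of the traveling wave, i.e.\ Lemma \ref{lemma: smothness} and $u\in\bigcap_{s\geq0}H^s\subset C^{\infty}_{\to 0}(\R)$. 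You instead keep the symbol fixed: the Riemann--Lebesgue lemma gives $e^{-itD}g\rightharpoonup 0$ weakly, and compactness of $H_u$ (Hilbert--Schmidt by Lemma \ref{H_u is H-S}) upgrades this to $\|H_ue^{-itD}g\|_{L^2}\to 0$; your treatment of the antilinearity through \eqref{sym H_u} is exactly the right fix for the weak-to-weak step, and the subsequence extraction for the weak-to-strong step is insensitive to antilinearity. What each buys: your argument needs neither the translation identity \eqref{id} nor any decay of the symbol, isolating compactness of $H_u$ as the only feature of $u$ used in this step (existence and completeness of the wave operators, which do rest on the trace-class computation and hence on smoothness, being granted); the paper's argument is more concrete---the symbol visibly translates off to spatial infinity---and avoids any discussion of compact antilinear maps. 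One further simplification available to you: once $\|H_ue^{-itA_u}\phi\|_{L^2}\to 0$ is known, the wave operator is not needed to conclude, since by \eqref{eqn:comutativity A and H_u^2} and unitarity of $e^{-itA_u}$,
\begin{equation*}
\|H_u^2\phi\|_{L^2}=\|e^{-itA_u}H_u^2\phi\|_{L^2}=\|H_u^2e^{-itA_u}\phi\|_{L^2}\leq\|H_u\|\,\|H_ue^{-itA_u}\phi\|_{L^2}\longrightarrow 0,
\end{equation*}
so $H_u^2\phi=0$ directly, and $\phi\in\textup{Ker}\,H_u^2=\textup{Ker}\,H_u$ by Lemma \ref{lemma: Ker Hu^2}.
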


\begin{proof}
It is enough to prove that $\big{[}\Omega^{+}(D,A_u)H_u^{2}\big{]}\big{(}\mathcal{H}_{\text{ac}}(A_u)\big{)}=0$.
If this holds, then we have $H_u^{2}\big{(}\mathcal{H}_{\text{ac}}(A_u)\big{)}=0$
since $H^2_u\mathcal{H}_{\text{ac}}(A_u)\subset\mathcal{H}_{\text{ac}}(A_u)$ and
$\Omega^{+}(D,A_u)$ is an isometry
on $\mathcal{H}_{\text{ac}}(A_u)$.
Therefore, $\mathcal{H}_{\text{ac}}(A_u)\subset\text{Ker}\, H_u^2=\text{Ker}\, H_u$.

Let us first note that
\begin{equation}\label{id}
H_ue^{itD}=e^{itD}H_{\tau_t(u)},
\end{equation}

\noi
where $\tau_{a}$ denotes the translation $\tau_{a}u(x)=u(x-a)$.
Indeed, for $f\in L^2_+$, passing into the Fourier space,
we have
\begin{align*}
\big{(}H_ue^{itD}f\big{)}^\wedge(\xi)&=\pmb{1}_{\xi\geq 0}\big{(}u\overline{e^{itD}f}\big{)}^\wedge(\xi)
=\frac{1}{2\pi}\pmb{1}_{\xi\geq 0}\int_{\R}\hat{u}(\xi-\eta)e^{it\eta}\hat{\bar{f}}(\eta)d\eta\\
&=\frac{1}{2\pi}\pmb{1}_{\xi\geq 0}e^{it\xi}\int e^{-it(\xi-\eta)}\hat{u}(\xi-\eta)\hat{\bar{f}}(\eta)d\eta
=\pmb{1}_{\xi\geq 0}e^{it\xi}\big{(}\tau_{t}(u)\bar{f}\big{)}^\wedge(\xi)\\
&=\pmb{1}_{\xi\geq 0}\big{(}e^{itD}(\tau_{t}(u)\bar{f})\big{)}^\wedge(\xi)=\big{(}e^{itD}H_{\tau_{t}(u)}f\big{)}^\wedge(\xi).
\end{align*}

By Lemma \ref{lemma:commutativity H_u^2 and P_{ac}}, \eqref{eqn:comutativity A and H_u^2}, and \eqref{id},
we have for all $f\in \mathcal{H}_{\text{ac}}(A_u)$
\begin{align*}
e^{itD}e^{-itA_{u}}P_{\text{ac}}H_u^2f
&=e^{itD}e^{-itA_{u}}H_u^2f
=e^{itD}H_u^2e^{-itA_{u}}f
=e^{itD}H_{u}H_ue^{-itD}e^{itD}e^{-itA_{u}}f\\
&=e^{itD}H_{u}e^{-itD}H_{\tau_{-t}(u)}e^{itD}e^{-itA_{u}}f
=H^2_{\tau_{-t}(u)}e^{itD}e^{-itA_{u}}P_{\text{ac}}(A_{u})f.
\end{align*}

\noi
We intend to prove that $H^2_{\tau_{-t}(u)}e^{itD}e^{-itA_{u}}P_{\text{ac}}(A_{u})f$ tends to 0 in the $L^2_+$-norm
as $t\to -\infty$. From this, we conclude that $\Omega^{+}(D,A_u)H_u^{2}f=0$.

Since, by Lemma \ref{H_u is H-S}, $H_{\tau_{-t}(u)}$ is a uniformly bounded operator,
 it is enough to prove that $H_{\tau_{-t}(u)}e^{itD}e^{-itA_{u}}P_{\text{ac}}(A_{u})f$ tends to 0.
\begin{align}
\|H_{\tau_{-t}(u)}e^{itD}e^{-itA_{u}}
& P_{\text{ac}}(A_{u})f\|_{L^2_+}\notag\\
&\leq \Big\|H_{\tau_{-t}(u)}\Big{(}e^{itD}e^{-itA_{u}}P_{\text{ac}}(A_{u})f-\Omega^{+}(D,A_u)f\Big{)}\Big\|_{L^2_+}\notag \\
& \hphantom{XXXXXX} +\|H_{\tau_{-t}(u)}\Omega^{+}(D,A_u)f\|_{L^2_+}\notag\\
&\leq \frac{1}{\sqrt{2\pi}}\|u\|_{\dot{H}^{1/2}}\|e^{itD}e^{-itA_{u}}P_{\text{ac}}(A_{u})f-\Omega^{+}(D,A_u)f\|_{L^2_+} \notag \\
& \hphantom{XXXXXX}+\int_{\R}|u(x+t)|^2|\Omega^{+}(D,A_u)f(x)|^2dx\label{terms}
\end{align}

\noi
The first term in \eqref{terms} converges to $0$ by the definition of the wave operator $\Omega^+(D,A_u)$.

Since $u$ is a traveling wave, \[u\in \bigcap_{s\geq 0}H^s(\R)\subset C^{\infty}_{\to 0}(\R),\]
 where $C^{\infty}_{\to 0}(\R)$ is the space of functions $f$ of class $C^{\infty}$ such that
$\lim_{x\to -\infty}D^{k}f(x)=\lim_{x\to \infty}D^{k}f(x)=0$
for all $k\in\N$. Therefore, for arbitrary fixed $x$, we have
\[\lim_{t\to -\infty}\tau_{-t}(u)(x)=\lim_{t\to -\infty}u(x+t)=0.\]
Note also that $|u(x+t)|^2|\Omega^{+}(D,A_u)f(x)|^2\leq \|u\|_{L^{\infty}}|\Omega^{+}(D,A_u)f(x)|^2$
 for all $x\in\R$. Then the second term in \eqref{terms} converges to $0$ by the dominated convergence theorem.
Hence $\big{[}\Omega^{+}(D,A_u)H_u^{2}\big{]}\big{(}\mathcal{H}_{\text{ac}}(A_u)\big{)}=0$.
\end{proof}

\section{Classification of traveling waves}

\begin{lemma}\label{lemma: stationary waves}
There are no nontrivial traveling waves of velocity $c=0$ in $L^2_+(\R)$.
\end{lemma}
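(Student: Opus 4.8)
The plan is to show that any stationary wave must have modulus $|u(x)|^2$ taking only the values $0$ and $\omega$ for almost every $x$, which is incompatible with $u\in L^2_+(\R)$ unless $u\equiv0$. The heuristic is that a nonzero stationary wave would essentially be an inner function on $\C_+$, and such functions have modulus bounded below on $\R$, hence cannot decay enough to be square integrable. Concretely, a stationary wave solves \eqref{eqn:u} with $c=0$, i.e. $\omega u=\Pi(|u|^2u)$, where $u\in H^{1/2}_+\subset L^2\cap L^4$, so $|u|^2u\in L^{4/3}$ and $\Pi$ applies. First I would rewrite the equation as $(1-\Pi)(|u|^2u)=|u|^2u-\omega u=(|u|^2-\omega)u$, so that the function $f:=(|u|^2-\omega)u$ has Fourier transform supported in $(-\infty,0)$. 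Conjugating, $G:=\overline{f}=(|u|^2-\omega)\bar u$ has spectrum in $[0,\infty)$, exactly as $u$ does.

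The key step is to examine the product
\[
uG=(|u|^2-\omega)|u|^2=|u|^4-\omega|u|^2 .
\]
Since $u\in L^2\cap L^4$, this product lies in $L^1(\R)$ and is real-valued. On the other hand $u$ and $G$ both have Fourier transform supported in $[0,\infty)$, so for $\xi_0<0$ the value $\widehat{uG}(\xi_0)$ is a convolution of $\hat u$ and $\hat G$ evaluated at $\xi_0$, and the supports of $\hat u(\eta)$ and $\hat G(\xi_0-\eta)$ in that integral (namely $\eta\ge0$ and $\eta\le\xi_0<0$) are disjoint; hence $\widehat{uG}$ is supported in $[0,\infty)$. Because $uG$ is real, $\widehat{uG}$ is Hermitian, and being supported in $[0,\infty)$ and continuous (as $uG\in L^1$) it must vanish identically. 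Therefore $uG=0$ a.e., that is
\[
|u(x)|^2\bigl(|u(x)|^2-\omega\bigr)=0\qquad\text{for a.e. }x\in\R,
\]
so $|u(x)|^2\in\{0,\omega\}$ almost everywhere.

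It then remains to conclude. If $\omega\le0$ the relation forces $|u|^2=0$ a.e. and hence $u=0$. If $\omega>0$, then $\{x:u(x)\neq0\}=\{x:|u(x)|^2=\omega\}$ has finite Lebesgue measure, since $\int_{\R}|u|^2=\omega\,\bigl|\{u\neq0\}\bigr|<\infty$; thus $u$ vanishes on a set of infinite, hence positive, measure. By the uniqueness theorem for the Hardy space $H^2(\C_+)$---a nonzero element of $L^2_+(\R)$ cannot vanish on a set of positive measure---this again gives $u=0$, completing the proof.

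The main obstacle I anticipate is the rigorous justification of the spectral-support statement for the product $uG$ at the available regularity: because $u$ is only known to lie in $L^4$, the factor $|u|^2u$ lies merely in $L^{4/3}$, so one cannot remain inside $L^2$ and must argue the support of $\widehat{uG}$ directly through the disjoint-support convolution computation above (equivalently, through $H^1(\C_+)$ Hardy-space theory, since $u\in H^4(\C_+)$ and $G\in H^{4/3}(\C_+)$ give $uG\in H^1(\C_+)$). The second, softer ingredient is the Hardy-space uniqueness theorem used in the case $\omega>0$.
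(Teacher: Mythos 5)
Your proof is correct and takes essentially the same route as the paper: both arguments show that the real-valued $L^1$ function $|u|^4-\omega|u|^2$ has Fourier transform supported in a half-line (the paper gets this by pairing the equation against $e^{i\xi x}u$ for $\xi\geq 0$ and using self-adjointness of $\Pi$, you by the Hardy-times-anti-Hardy convolution/support argument), then use Hermitian symmetry of the Fourier transform of a real function to conclude it vanishes identically, and finally rule out the nontrivial case via the fact that a nonzero function in $L^2_+(\R)$ cannot vanish on a set of positive measure. Your flagged technical obstacle about $L^{4/3}$ dissolves on noting that $u\in H^{1/2}_+\subset L^6(\R)$ by Sobolev embedding, so $|u|^2u\in L^2$ and hence $G\in L^2$, and the disjoint-support convolution computation runs entirely within $L^2$ with absolutely convergent integrals.
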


\begin{proof}
Let $u$ be a nontrivial traveling wave of velocity $c=0$.
Then, equation \ref{eqn:u} gives $\Pi(|u|^2u)=\omega u$.
Taking the scalar product with $e^{i\xi x}u(x)$, where $\xi\geq 0$, we obtain
\begin{equation*}
\mathcal{F}(|u|^4-\omega |u|^2)(\xi)=0,
\end{equation*}

\noi
where $\mathcal{F}$ denotes the Fourier transform.
Since $|u|^4-\omega |u|^2$ is a real valued function,
we have that the last equality holds for all $\xi\in\R$.
Thus $|u|^4-\omega |u|^2=0$ on $\R$ and therefore $u(x)=0$ or $|u(x)|^2=\omega>0$,
for all $x\in\R$. Since the function $u$ is holomorphic on $\C_+$, its trace on $\R$ is either identically zero, or the set of zeros of $u$ on $\R$ has Lebesgue measure zero.
In conclusion, we have $|u|^2=\omega>0$ a.e. on $\R$ and thus $u$ is not a function in $L^2_+(\R)$.

\end{proof}

\begin{lemma}\label{lemma:uv holomorphic}
If $u\in H^{s}_+$ for $s>\frac{1}{2}$ and $v\in \textup{Ker } H_{u}$, then $\bar{u}v\in L^2_+$. Moreover, if $u\in L^{\infty}(\R)$, then $T_{|u|^{2}}v=|u|^{2}v$.
\end{lemma}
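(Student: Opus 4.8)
The plan is to reduce both assertions to elementary statements about the support of the Fourier transform, using the dictionary that $\Pi(f)=f$ is equivalent to $\supp\hat f\subset[0,\infty)$, while $\Pi(f)=0$ is equivalent to $\supp\hat f\subset(-\infty,0]$. First I would record two preliminary facts. Since $s>\frac12$, the Sobolev embedding $H^s_+\hookrightarrow L^\infty$ gives $u\in L^\infty(\R)$, so that $u\bar v\in L^2(\R)$ (because $v\in L^2_+$); hence $H_u(v)=\Pi(u\bar v)$ is a genuine $L^2$ object and the hypothesis $v\in\text{Ker }H_u$ reads $\Pi(u\bar v)=0$. Second, I would invoke the conjugation rule for the Fourier transform, $\widehat{\bar f}(\xi)=\overline{\hat f(-\xi)}$, which interchanges the two half-lines: if $\supp\hat f\subset(-\infty,0]$, then $\supp\widehat{\bar f}\subset[0,\infty)$.

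For the first assertion, the condition $\Pi(u\bar v)=0$ means precisely that $\supp\widehat{u\bar v}\subset(-\infty,0]$. Writing $\bar u v=\overline{u\bar v}$ and applying the conjugation rule gives $\supp\widehat{\bar u v}\subset[0,\infty)$. Since $\bar u v\in L^2(\R)$ (again because $u\in L^\infty$ and $v\in L^2$), this says exactly that $\bar u v\in L^2_+$.

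For the second assertion, assume $u\in L^\infty$. I would factor $|u|^2 v=u\,(\bar u v)$. Here $u\in L^2_+$ (indeed $u\in H^s_+$), so $\supp\hat u\subset[0,\infty)$, and by the first part $\bar u v\in L^2_+$, so $\supp\widehat{\bar u v}\subset[0,\infty)$. Because $u\in L^\infty$, the product $|u|^2 v=u(\bar u v)$ lies in $L^2(\R)$, and since $u,\bar u v\in L^2(\R)$ their pointwise product lies in $L^1(\R)$ as well; hence the convolution theorem applies and $\widehat{|u|^2 v}=\frac{1}{2\pi}\hat u * \widehat{\bar u v}$ is the convolution of two functions supported in $[0,\infty)$, so it is itself supported in $[0,\infty)$. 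Consequently $|u|^2 v\in L^2_+$, which is exactly the identity $T_{|u|^2}v=\Pi(|u|^2 v)=|u|^2 v$.

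These arguments all live at the level of Fourier support, so the only step I expect to require real care is the bookkeeping that makes the frequency manipulations legitimate: checking that each product $u\bar v$, $\bar u v$, and $|u|^2 v$ genuinely lies in $L^2$ (and, for the convolution identity, in $L^1\cap L^2$), so that the Fourier transform, the conjugation rule, and the convolution theorem may be applied, and verifying that "support in a half-line" is preserved under conjugation and convolution. None of this involves a genuine difficulty beyond the embedding $H^s_+\hookrightarrow L^\infty$ for $s>\frac12$.
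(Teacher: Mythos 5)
Your proof is correct and follows essentially the same route as the paper's (much terser) argument: the paper likewise reads $\Pi(u\bar v)=0$ as vanishing of the positive-frequency part, concludes $\bar u v=\overline{u\bar v}\in L^2_+$ by conjugation, and then uses that the product of the two Hardy-class functions $u$ and $\bar u v$ stays in the Hardy class, so that $\Pi$ acts as the identity on $|u|^2v=u(\bar u v)$. Your version simply makes explicit the Fourier-support and integrability bookkeeping (Sobolev embedding, $L^1\cap L^2$ membership, convolution theorem) that the paper leaves implicit.
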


\begin{proof}
Indeed, $0=H_{u}(v)=\Pi(u\bar{v})$ and thus $\bar{u}v\in L^2_+$. Furthermore, since $u,\bar{u}v\in L^2_+$, we obtain $T_{|u|^{2}}v=\Pi(u\bar{u}v)=|u|^{2}v$.
\end{proof}

\begin{lemma}\label{lemma psi}
Let $u\in H^s_+$, $s>\frac{1}{2}$, be a solution of the cubic Szeg\"{o} equation \eqref{eq:szego}.
Consider the following Cauchy problem:
\begin{equation}\label{eqn psi}
\begin{cases}
 i\dt\psi=|u(t)|^{2}\psi\\
\psi \big|_{t = 0} = \psi_0,
\end{cases}
\end{equation}

\noi
If $\psi_{0}\in \textup{Ker}\, H_{u(0)}$,
then $\psi(t)\in \textup{Ker}\, H_{u(t)}$ for all $t\in\R$.

\begin{proof}
Let us first consider:
\begin{equation*}
\begin{cases}
 i\dt\psi_1=T_{|u(t)|^{2}}\psi_1\\
\psi_1 \big|_{t = 0} = \psi_0,
\end{cases}
\end{equation*}

\noi
Using the Lax pair structure,
 we have
\begin{align*}
\dt H_{u}(\psi_{1})&=[B_u,H_u]\psi_1+H_u\dt\psi_1=[\frac{i}{2}H_u^2-iT_{|u|^2},H_u]\psi_1+H_u(-iT_{|u|^2}\psi_1)\\
&=-iT_{|u|^2}H_u\psi_1-iH_uT_{|u|^2}\psi_1+iH_uT_{|u|^2}\psi_1=-iT_{|u|^2}H_u\psi_1.
\end{align*}

\noi
The solution of this linear Cauchy problem
\begin{equation*}
\begin{cases}
\dt H_{u}(\psi_{1})=-iT_{|u|^2}H_u\psi_1\\
H_u(\psi_1(0))=0
\end{cases}
\end{equation*}

\noi
is identically zero. i.e.,  $H_{u(t)}\psi_1(t)=0$ for all $t\in\R$.
Consequently, $\psi_1(t)\in \text{Ker}\,H_{u(t)}$ and by Lemma \ref{lemma:uv holomorphic} we obtain $T_{|u|^2}\psi_1=|u|^2\psi_1$. In conclusion, $\psi(t)=\psi_1(t)\in \text{Ker}\,H_{u(t)}$.
\end{proof}
\end{lemma}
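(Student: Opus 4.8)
The plan is to show that the $L^2_+$-valued map $t\mapsto H_{u(t)}\psi(t)$ vanishes identically by realizing it as the solution of a homogeneous linear evolution equation started from $0$. The first move is to differentiate $H_u\psi$ in time, feeding in the Lax-pair identity $\frac{d}{dt}H_u=[B_u,H_u]$ of Proposition \ref{th:Lax pair} together with the equation for $\psi$. Since $B_u=\frac{i}{2}H_u^2-iT_{|u|^2}$ and $H_u$ commutes with $H_u^2$, the commutator reduces to $[B_u,H_u]=-i\big(T_{|u|^2}H_u-H_uT_{|u|^2}\big)$, so the Lax pair contributes the two terms $-iT_{|u|^2}H_u\psi+iH_uT_{|u|^2}\psi$.

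The main obstacle surfaces upon adding the term $H_u\dot\psi$. The prescribed equation $i\dt\psi=|u|^2\psi$ gives $H_u\dot\psi=-iH_u(|u|^2\psi)$, and cancelling this against $iH_uT_{|u|^2}\psi$ would require the pointwise multiplication operator $|u|^2$ and the Toeplitz operator $T_{|u|^2}$ to agree on $\psi$; on a general element of $L^2_+$ they do not, so the computation fails to close. My remedy is to run the argument not on \eqref{eqn psi} but on the \emph{Toeplitz-regularized} Cauchy problem
\begin{equation*}
i\dt\psi_1=T_{|u|^2}\psi_1,\qquad \psi_1\big|_{t=0}=\psi_0,
\end{equation*}
which is a bona fide linear ODE in $L^2_+$ driven by the bounded, time-continuous generator $T_{|u|^2}$, hence locally well-posed. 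For this equation the same differentiation now closes exactly:
\begin{equation*}
\dt\big(H_u\psi_1\big)=-iT_{|u|^2}H_u\psi_1+iH_uT_{|u|^2}\psi_1-iH_uT_{|u|^2}\psi_1=-iT_{|u|^2}\big(H_u\psi_1\big).
\end{equation*}

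Writing $w(t)=H_{u(t)}\psi_1(t)$, this reads $\dt w=-iT_{|u|^2}w$ with $w(0)=H_{u(0)}\psi_0=0$; since $|u|^2$ is real, $T_{|u|^2}$ is self-adjoint, so $\|w(t)\|_{L^2}$ is conserved and therefore $w\equiv0$, i.e. $\psi_1(t)\in\textup{Ker}\,H_{u(t)}$ for every $t$. It then remains to transfer this back to $\psi$. As $u\in H^s_+$ with $s>\tfrac12$ embeds into $L^\infty(\R)$, Lemma \ref{lemma:uv holomorphic} applies to $\psi_1(t)\in\textup{Ker}\,H_{u(t)}$ and yields $T_{|u|^2}\psi_1=|u|^2\psi_1$; hence $\psi_1$ in fact solves the original equation $i\dt\psi_1=|u|^2\psi_1$ with data $\psi_0$. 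By uniqueness for that Cauchy problem, $\psi=\psi_1$, and so $\psi(t)\in\textup{Ker}\,H_{u(t)}$ for all $t$. The conceptual point to keep in view is that $T_{|u|^2}$ and multiplication by $|u|^2$ coincide only on $\textup{Ker}\,H_u$, which is exactly why the argument must be carried out on the regularized solution $\psi_1$ first and only identified with $\psi$ at the end.
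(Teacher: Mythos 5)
Your proof is correct and essentially identical to the paper's: the paper introduces the same Toeplitz-regularized problem for $\psi_1$, uses the Lax pair to show that $H_{u(t)}\psi_1(t)$ satisfies a homogeneous linear equation with zero initial data (hence vanishes identically), and then applies Lemma \ref{lemma:uv holomorphic} to get $T_{|u|^2}\psi_1=|u|^2\psi_1$ and identify $\psi_1$ with $\psi$. The only caveat---shared by the paper's own displayed computation---is that $H_u$ is \emph{antilinear}, so scalars conjugate when pulled through it and $[\tfrac{i}{2}H_u^2,H_u]=iH_u^3\neq 0$ rather than $0$; the exact identity is $\dt\big(H_u\psi_1\big)=i\big(H_u^2-T_{|u|^2}\big)\big(H_u\psi_1\big)$, which is still a homogeneous linear equation whose generator is $i$ times a bounded self-adjoint operator, so your norm-conservation argument and the conclusion $H_u\psi_1\equiv 0$ go through unchanged.
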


The space $\text{Ker}\,H_{u}$ is invariant under multiplication by $e^{i\alpha x}$, for all $\alpha\geq 0$. Indeed, suppose $f\in\text{Ker}\,H_{u}$. Then $\ft{u\bar{f}}(\xi)=0$, for all $\xi\geq 0$ and
\[\big{(}H_u(e^{i\alpha x}f)\big{)}^\wedge(\xi)=\big{(}e^{-i\alpha x}u\bar{f}\big{)}^\wedge(\xi)=\ft{u\bar{f}}(\xi+\alpha)=0,\]
for all $\xi,\alpha\geq 0$. Hence, $e^{i\alpha x}f\in \text{Ker}\,H_u$ for all $\alpha\geq 0$.

One can then apply the following theorem to the subspaces $\text{Ker}\,H_{u_0}$.
\begin{proposition}[Lax \cite{Lax}]\label{th: Lax 1959}
Every non-empty closed subspace of $L^2_+$ which is invariant under multiplication by $e^{i\alpha x}$ for all $\alpha\geq 0$ is of the form $FL^2_+$, where $F$ is an analytic function in the upper-half plane, $|F(z)|\leq 1$ for all $z\in\C_+$, and $|F(x)|=1$ for all $x\in\R$.
Moreover, $F$ is uniquely determined up to multiplication by a complex constant of absolute value 1.
\end{proposition}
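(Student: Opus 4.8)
The plan is to transfer the problem to the Fourier side and then invoke the continuous (Cooper--Lax--Phillips) analogue of the Wold decomposition. Under $\mathcal F$, the Paley--Wiener identification carries $L^2_+$ isometrically onto $L^2(0,\infty)$, and multiplication by $e^{i\alpha x}$ becomes the right-translation semigroup $(S_\alpha g)(\xi)=g(\xi-\alpha)\pmb{1}_{\xi\ge\alpha}$, since $\widehat{e^{i\alpha x}f}(\xi)=\hat f(\xi-\alpha)$. A closed invariant subspace $M$ thus corresponds to a closed $S_\alpha$-invariant $\wt M\subset L^2(0,\infty)$. The first point, which I would record immediately, is that the invariance is automatically \emph{simple}: because $\widehat{e^{i\alpha x}f}$ is supported in $[\alpha,\infty)$, any element of $\bigcap_{\alpha>0}e^{i\alpha x}M$ has Fourier transform supported in $\bigcap_{\alpha>0}[\alpha,\infty)$, whence
\[
\bigcap_{\alpha>0}e^{i\alpha x}M=\{0\}.
\]
(One could instead pass to $\D$ through the conformal map $\omega$, but there the semigroup $e^{i\alpha x}$ becomes multiplication by the singular inner functions $\exp(-\alpha\frac{1+z}{1-z})$, so working directly on $\C_+$ is cleaner.)

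Granting simplicity, the restricted semigroup $V_\alpha:=e^{i\alpha x}|_M$ is a strongly continuous semigroup of isometries on the Hilbert space $M$ with $\bigcap_\alpha V_\alpha M=\{0\}$. The continuous Wold decomposition says that every such pure isometric semigroup is unitarily equivalent, through an intertwining unitary, to right translation on $L^2(0,\infty;\mathcal N)$ for a multiplicity space $\mathcal N$. The crux is the \textbf{multiplicity-one} claim: since $M$ embeds isometrically and semigroup-equivariantly into $L^2_+$, whose own shift semigroup has multiplicity one (its model space being exactly $L^2(0,\infty)$), the multiplicity of the invariant subspace cannot exceed that of the ambient space, so $\dim\mathcal N=1$. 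This yields an isometry $W:L^2_+\to L^2_+$ with range $M$ commuting with $e^{i\alpha x}$ for all $\alpha\ge 0$. Because the commutant of the shift semigroup on $L^2_+$ consists precisely of multiplications by functions in $H^\infty(\C_+)$, the operator $W$ is multiplication by some $F\in H^\infty(\C_+)$. That $W$ is an isometry forces $|F|=1$ a.e.\ on $\R$ (test against Cauchy kernels, whose squared moduli are Poisson kernels, to conclude the Poisson integral of $|F|^2-1$ vanishes), and $\|F\|_\infty=1$ together with the maximum principle gives $|F(z)|\le 1$ on $\C_+$. Hence $M=F\,L^2_+$ with $F$ inner. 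I expect the multiplicity comparison to be the main obstacle, as it rests on the Lax--Phillips structure theory rather than on a one-line estimate.

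Uniqueness I would prove directly and elementarily. Suppose $F_1L^2_+=F_2L^2_+=M$ with $F_1,F_2$ inner, and set $\psi=F_1/F_2$, so $|\psi|=1$ a.e.\ on $\R$. For any $g\in L^2_+$ we have $F_1g\in F_1L^2_+=F_2L^2_+$, hence $F_1g=F_2k$ for some $k\in L^2_+$, giving $\psi g=k\in L^2_+$; thus $\psi\,L^2_+\subset L^2_+$ and $\psi\in H^\infty(\C_+)$. By symmetry $\psi^{-1}=F_2/F_1\in H^\infty(\C_+)$ as well, so $\psi$ is a zero-free bounded holomorphic function on $\C_+$ with bounded inverse. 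Since $|\psi|=1$ on $\R$, the maximum principle applied to both $\psi$ and $\psi^{-1}$ forces $|\psi|\equiv 1$ throughout $\C_+$, and a nonconstant holomorphic function cannot have constant modulus, so $\psi\equiv c$ with $|c|=1$. Therefore $F_1=cF_2$, which is exactly the asserted determination of $F$ up to a unimodular constant.
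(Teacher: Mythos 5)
First, there is nothing in the paper to compare against: the paper does not prove this proposition, it quotes it as a known theorem from Lax's 1959 paper \cite{Lax}. Judged on its own, your proposal has a sound skeleton --- the Fourier transfer, the purity computation $\bigcap_{\alpha>0}e^{i\alpha x}M=\{0\}$, the appeal to the continuous Wold/Cooper decomposition, the commutant theorem, and the uniqueness argument --- and the purity step and the uniqueness part are complete and correct as written. But the step you yourself flag as the crux, multiplicity one, is a genuine gap, and the principle you invoke for it (``the multiplicity of the invariant subspace cannot exceed that of the ambient space'') is simply false. It fails inside the very setup of the problem: the semigroup of multiplications by $e^{i\alpha x}$, $\alpha\ge 0$, on all of $L^2(\R)$ consists of unitaries, so its pure part is trivial (multiplicity zero), yet its invariant subspace $L^2_+$ carries a restriction that is a pure isometric semigroup of multiplicity one. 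Thus restriction to an invariant subspace can strictly increase multiplicity, and equivariant isometric embeddability alone yields no bound. What your argument actually needs is: a pure isometric semigroup that embeds equivariantly and isometrically into the pure multiplicity-one semigroup on $L^2_+$ has multiplicity $\le 1$. To prove that, one must extend the embedding $L^2(0,\infty;\mathcal{N})\cong M\hookrightarrow L^2_+$ to the minimal unitary dilations (the translation group on $L^2(\R;\mathcal{N})$ versus the multiplication group $e^{i\alpha x}$ on $L^2(\R)$), observe that a subspace invariant under a full unitary group is reducing, and compare spectral multiplicities of the generators (uniform multiplicity $\dim\mathcal{N}$ versus the simple spectrum of multiplication by $x$). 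That is exactly the Lax--Phillips translation-representation theorem, a result of the same depth as the proposition being proved; so, as written, your proof outsources its entire content to an unproved assertion.

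It is also worth noting that the route you dismissed in your parenthetical is the one that closes elementarily. Since $\frac{1}{x+i}=-i\int_0^\infty e^{-\alpha}e^{i\alpha x}\,d\alpha$ as a Bochner integral, any closed subspace invariant under all $e^{i\alpha x}$, $\alpha\ge 0$, is invariant under multiplication by $\frac{x-i}{x+i}$; conversely, invariance under this single operator implies invariance under every $H^\infty(\C_+)$ multiplier, by weak-$*$ density of polynomials, and in particular under every $e^{i\alpha x}$. Hence the invariant subspaces of the semigroup coincide with the invariant subspaces of the single shift $M_{(x-i)/(x+i)}$, which the Cayley unitary from $L^2_+(\mathbb{S}^1)$-type Hardy space $H^2(\D)$ onto $L^2_+(\R)$ intertwines with multiplication by $z$. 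Beurling's theorem on the disk then gives $M=FL^2_+$ with $F$ inner on $\C_+$, and your uniqueness argument finishes the proof --- no Cooper decomposition and no multiplicity comparison needed.
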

We deduce that $\text{Ker}\,H_{u_0}=\phi L^2_+$,
where $\phi$ is a holomorphic function in the upper half-plane $\C_+$,
satisfying $|\phi(x)|=1$ on $\R$ and $|\phi(z)|\leq 1$
for all $z\in\C_{+}$.

Functions satisfying the properties in Lax's theorem are called inner functions in the sense of Beurling-Lax.
A special class of inner functions is given by the Blaschke products. Given $\ld_{j}\in\C$ such that for all $j$
$$\text{Im}\,\ld_j>0$$
and
$$\sum_{j}\frac{\text{Im}\,\ld_j}{1+|\ld_j|^{2}}<\infty,$$
the corresponding Blaschke product is defined by
\begin{equation}\label {eqn: Blaschke}
B(z)=\prod_j\eps_j\frac{z-\ld_j}{z-\overline{\ld}_j},
\end{equation}

\noi
where $\eps_j=\tfrac{|\ld_j^2+1|}{\ld_j^2+1}$ (by definition $\eps_j=1$ if $\lambda_j=1$).

Inner functions have a canonical factorization, which is analogous to the canonical factorization of inner functions on the unit disk, see \cite[Theorem 17.15]{Rudin}, \cite[Theorem 6.4.4]{Nikolskii}. More precisely, every inner function $F$ can be written as the product
\begin{equation}\label{eqn: factorization}
F(z)=\ld B(z) e^{iaz} e^{i\int_{\R} \frac{1+tz}{t-z}d\nu(t)},
\end{equation}

\noi
where $z\in\C_+$, $\ld \in \C$ with $|\ld | = 1$, $a\geq 0$,
$B$ is a Blaschke product, and $\nu$ is a positive singular measure with respect to the Lebesgue measure.
In particular, the inner function $\phi$ has such a canonical factorization.

\begin{proposition}\label{lemma equation phi}
Let u be a traveling wave and denote by $\phi$
an inner function such that $\textup{Ker}\,H_{u_0}=\phi L^2_+$.
Then, $\phi$ satisfies the following equation on $\R$:
\begin{equation}\label{eqn:phi}
cD\phi=|u_{0}|^{2}\phi.
\end{equation}

\end{proposition}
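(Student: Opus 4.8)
The plan is to reduce the statement to the vanishing of the real-valued function
\[
\Phi := \frac{D\phi}{\phi} - \frac{1}{c}|u_0|^2 \quad \text{on } \R,
\]
and then to kill $\Phi$ by combining the Lax-pair algebra with the spectral input of Section~3. Write $u=u_0$. Recall that $\textup{Ker}\,H_u = \phi L^2_+$ and that, by the commutation relation \eqref{eqn:comutativity A and H_u^2} of Lemma~\ref{Ran Hu} together with $\textup{Ker}\,H_u = \textup{Ker}\,H_u^2$ (Lemma~\ref{lemma: Ker Hu^2}), the operator $A_u$ preserves $\textup{Ker}\,H_u$: indeed, if $v\in\textup{Ker}\,H_u^2$ then $H_u^2 A_u v = A_u H_u^2 v = 0$. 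First I would take $v=\phi h$ with $h$ in a dense subspace of $L^2_+$, smooth enough that $\phi h$ lies in the domain of $A_u$. Since $u\in L^\infty$ (Lemma~\ref{lemma: smothness}) and $v\in\textup{Ker}\,H_u$, Lemma~\ref{lemma:uv holomorphic} gives $T_{|u|^2}v=|u|^2 v$, so the Leibniz rule $D(\phi h)=(D\phi)h+\phi(Dh)$ yields
\[
A_u(\phi h) = (D\phi)h + \phi\,Dh - \tfrac{1}{c}|u|^2\phi h = \phi\,(Dh) + \phi\,\Phi\,h .
\]
As $A_u(\phi h)\in\phi L^2_+$ and $\phi\,Dh\in\phi L^2_+$, I conclude $\Phi h\in L^2_+$ for every such $h$.

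Next I would identify $\Phi$. On $\R$ we have $|\phi|=1$, so $D\phi/\phi=-i\phi'/\phi$ is real-valued (differentiating $\phi\cj{\phi}=1$ gives $\cj{D\phi/\phi}=D\phi/\phi$); since $\tfrac1c|u|^2$ is real, $\Phi$ is real-valued on $\R$. On the other hand, $\Phi h\in L^2_+$ for all $h$ in a dense set means, on the Fourier side, that $\ft{\Phi}\ast\ft{h}$ is supported in $[0,\infty)$ whenever $\ft{h}$ is; taking $\ft{h}$ concentrated near an arbitrarily small positive frequency forces $\supp\ft{\Phi}\subseteq[0,\infty)$. A function that is simultaneously real-valued on $\R$ and ``analytic'' in this sense must have Fourier support reduced to $\{0\}$, hence $\Phi$ is constant, say $\Phi\equiv k\in\R$. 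Equivalently, conjugating $A_u|_{\textup{Ker}\,H_u}$ by the isometry $h\mapsto\phi h$ of $L^2_+$ onto $\phi L^2_+$ produces the operator $D+\Phi$ on $L^2_+$, whose self-adjointness reproves $\Phi$ real while the shift-invariance reproves it analytic.

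The remaining, and main, difficulty is to show $k=0$; this is exactly where the scattering theory of Section~3 is used. Once $\Phi\equiv k$, the computation above shows that $A_u|_{\textup{Ker}\,H_u}$ is unitarily equivalent, via $h\mapsto\phi h$, to $D+k$ on $L^2_+$, an operator with purely absolutely continuous spectrum $[k,\infty)$. Hence every vector of $\textup{Ker}\,H_u=\phi L^2_+$ has absolutely continuous spectral measure for $A_u$, i.e. $\textup{Ker}\,H_u\subseteq\mathcal{H}_{\textup{ac}}(A_u)$; combined with Proposition~\ref{prop:H_ac subset Ker H_u} this gives $\textup{Ker}\,H_u=\mathcal{H}_{\textup{ac}}(A_u)$. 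Therefore $[k,\infty)=\sigma\big(A_u|_{\mathcal{H}_{\textup{ac}}(A_u)}\big)=\sigma_{\textup{ac}}(A_u)=[0,\infty)$ by Corollary~\ref{cor sigma ac}, which forces $k=0$. Then $\Phi\equiv 0$, that is $cD\phi=|u_0|^2\phi$, as desired. I expect the delicate points to be the domain and regularity bookkeeping justifying the Leibniz computation and the multiplier argument, and the spectral-measure matching on the reducing subspace $\textup{Ker}\,H_u$ that pins the constant to zero.
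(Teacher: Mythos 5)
Your route is genuinely different from the paper's: the paper derives \eqref{eqn:phi} \emph{dynamically}, by computing $\textup{Ker}\,H_{u(t)}$ in two ways ($\tau_{ct}(\phi)L^2_+$ from the traveling-wave structure, and $\phi e^{-i\int_0^t|u_s|^2ds}L^2_+$ from Lemma \ref{lemma psi}) and then invoking the \emph{uniqueness} statement in Proposition \ref{th: Lax 1959} to get $\phi e^{-i\int_0^t|u_s|^2ds}=\tau_{ct}(\phi)e^{i\gamma(t)}$, which is differentiated in $t$; you instead argue \emph{statically}, from the invariance of $\textup{Ker}\,H_u$ under $A_u$. Two of your three steps are sound: the invariance of $\textup{Ker}\,H_u$ (via \eqref{eqn:comutativity A and H_u^2} and Lemma \ref{lemma: Ker Hu^2}, with domains handled by resolvents) is correct, and your endgame for $k=0$ is correct and in fact cleaner than the paper's: once $A_u$ restricted to $\textup{Ker}\,H_u$ is identified with $D+k$, Proposition \ref{prop:H_ac subset Ker H_u} gives $\textup{Ker}\,H_u=\mathcal{H}_{\textup{ac}}(A_u)$ and Corollary \ref{cor sigma ac} gives $[k,\infty)=[0,\infty)$ in one stroke, whereas the paper needs a separate Fourier-side contradiction argument to rule out $k/c<0$.

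The gap is in the middle step, and it is not bookkeeping: your function $\Phi=D\phi/\phi-\tfrac1c|u_0|^2$ is not known to exist. An inner function need not be differentiable on $\R$ in any usable sense; for instance the singular inner function $\phi(z)=e^{-i/z}$ (canonical factorization with $\nu=\delta_0$) is discontinuous at $x=0$ and $D\phi$ is not even locally integrable there, and Blaschke products with zeros accumulating on $\R$ are just as bad. These are precisely the $\phi$'s that Proposition \ref{lemma equation phi} and the proposition following it are designed to exclude, so positing that $D\phi$ is a function (in order to define $\Phi$, to apply Leibniz, and to run the multiplier argument, which further needs $\Phi$ tempered or bounded) assumes a substantial part of the conclusion. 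The same problem undermines your dense family: for singular $\phi$, smoothness of $h$ does \emph{not} give $\phi h\in H^1_+=\textup{dom}(A_u)$; what the resolvent argument actually provides is density of $\phi L^2_+\cap H^1_+$ in $\phi L^2_+$, but its elements are $v=\phi h$ with $h=\cj{\phi}v$ of unknown regularity, so the Leibniz computation cannot begin. In the paper's proof this circle never arises, because the $t$-dependence of $\phi e^{-i\int_0^t|u_s|^2ds}$ is explicitly smooth, and differentiating in $t$ is what \emph{produces}, rather than presupposes, the $x$-regularity of $\phi$. Your approach can be repaired, but it needs an idea absent from your write-up: conjugate $A_u|_{\phi L^2_+}$ by the unitary $h\mapsto\phi h$ to get a self-adjoint $\tilde A$ on $L^2_+$; using Lemma \ref{lemma:uv holomorphic} one checks $\tilde A(e^{i\alpha x}h)=e^{i\alpha x}(\tilde A+\alpha)h$ for $\alpha\geq0$, hence $e^{itD}e^{-it\tilde A}$ is a unitary commuting with multiplication by every $e^{i\alpha x}$, $\alpha\geq 0$; such an operator is multiplication by an $H^\infty$ function, hence a unimodular constant, and the group law forces $e^{itD}e^{-it\tilde A}=e^{-ikt}$, i.e.\ $\tilde A=D+k$ with no regularity of $\phi$ assumed (the equation \eqref{eqn:phi}, and with it the smoothness of $\phi$, then follows). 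Your parenthetical remark that ``shift-invariance reproves it analytic'' gestures at this, but as stated it again presupposes that $\Phi$ exists.
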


\begin{proof}
Since $u(t,x)=e^{-i\omega t}u_{0}(x-ct)$,
we have $H_{u(t)}=e^{-i\omega t}\tau_{ct}H_{u_{0}}\tau_{-ct}$. Thus,
\begin{equation*}
\text{Ker}\,H_{u(t)}=\tau_{ct}\text{Ker}\,H_{u_{0}}=\tau_{ct}(\phi)L^2_+.
\end{equation*}

\noi
Let $f\in L^2_+$ and let $\psi_0=\phi f\in \text{Ker} H_{u_0}$ be the initial data of the Cauchy problem \eqref{eqn psi} in Lemma \ref{lemma psi}.
We then have $\phi e^{-i\int_{0}^{t}|u_s|^{2}ds}f\in \text{Ker} H_{u(t)}$.
Therefore,
\begin{equation}\label{inclusion}
\phi e^{-i\int_{0}^{t}|u_s|^{2}ds}L^2_+\subset\tau_{ct}(\phi) L^2_+.
\end{equation}

\noi
Conversely,
by solving backward the problem \eqref{eqn psi}
with the initial data in $\tau_{ct}(\phi) L^2_+$ at time $t$, up to the time $t=0$,
we obtain
\begin{equation*}
\tau_{ct}(\phi) L^2_+\subset\phi e^{-i\int_{0}^{t}|u_s|^{2}ds}L^2_+
\end{equation*}
\noi
and thus, the two sets are equal.

Let us first prove that $\phi_t:=\phi e^{-i\int_{0}^{t}|u_s|^{2}ds}$ is an inner function. Note that $\phi_t$ is well defined on $\R$ and its absolute value is 1 on $\R$. Consider the function defined by $h(x)=\frac{\phi_t(x)}{x+i}$, for all $x\in\R$. Since $h\in L^2_+$, we can write using the Poisson integral that
\begin{equation*}
h(z)=\frac{1}{\pi}\int_{-\infty}^{\infty}\text{Im}z\frac{h(x)}{|z-x|^2}dx,
\end{equation*}

\noi
for all $z\in\C_+$. Then,
\begin{equation*}
zh(z)=\frac{1}{\pi}\int_{-\infty}^{\infty}\text{Im}z\frac{xh(x)}{|z-x|^2}dx+\frac{1}{\pi}\int_{-\infty}^{\infty}\text{Im}z\frac{(z-x)h(x)}{|z-x|^2}dx.
\end{equation*}

\noi
Note that the last integral is equal to $\int_{-\infty}^{\infty}\text{Im}z\frac{h(x)}{\bar{z}-x}dx$. By the residue theorem and using the fact that the function $\frac{h}{\bar{z}-x}$ is holomorphic on $\C_+$, we have that this integral is zero and thus
\begin{equation*}
zh(z)=\frac{1}{\pi}\int_{-\infty}^{\infty}\text{Im}z\frac{xh(x)}{|z-x|^2}dx.
\end{equation*}

\noi
Therefore, we can use the Poisson integral to extend $\phi_t$ to $\C_+$ as a holomorphic function.
\begin{equation}\label{eqn:Poisson phi}
\phi_t(z)=(z+i)h(z)=\frac{1}{\pi}\int_{-\infty}^{\infty}\text{Im}z\frac{(x+i)h(x)}{|z-x|^2}dx=\frac{1}{\pi}\int_{-\infty}^{\infty}\text{Im}z\frac{\phi_t(x)}{|z-x|^2}dx.
\end{equation}

\noi
Moreover,
\begin{equation*}
|\phi_t(z)|\leq\frac{1}{\pi}\int_{-\infty}^{\infty}\text{Im}z\frac{1}{|z-x|^2}dx=1,
\end{equation*}

\noi
for all $z\in\C_+$. Hence $\phi_t$ is an inner function.

Since $\tau_{ct}(\phi) $ and $\phi e^{-i\int_{0}^{t}|u_s|^{2}ds}$ are inner functions and
\begin{equation*}
\phi e^{-i\int_{0}^{t}|u_s|^{2}ds}L^2_+=\tau_{ct}(\phi) L^2_+,
\end{equation*}

\noi
Proposition \ref{th: Lax 1959} yields the existence of a real valued function $\gamma$ such that $\gamma(0)=0$ and
$$\phi e^{-i\int_{0}^{t}|u_s|^{2}ds}=\tau_{ct}(\phi) e^{i\gamma(t)}.$$
\noi
Taking the derivative with respect to $t$ we obtain that $\phi$ satisfies the following equation:
\begin{equation*}
cD\phi(x)=|u(t,x+ct)|^{2}\phi(x)+\dot{\gamma}(t)\phi(x).
\end{equation*}

\noi
for all $t\in\R$.
Since $u$ is a traveling wave, we have $|u(t,x+ct)|=|e^{-i\omega t}u_0(x)|=|u_0(x)|$.
Then we deduce that $\dot{\gamma}(t)=k$ and hence $\gamma (t)=kt$, for some $k\in\R$. Therefore,
\begin{equation}\label{eqn:phi 2}
cD\phi=(|u_{0}|^{2}+k)\phi.
\end{equation}

We prove in the following that $k=0$.
First, note that $\frac{k}{c}\geq 0$.
The function $\phi u_0\in \text{Ker} H_{u_0}$ and by Lemma \ref{lemma:uv holomorphic}, we have $|u_0|^2\phi=\overline{u}_0(u_0\phi)\in L^2_+$.
If $\frac{k}{c}$ is negative,
denoting $\chi:=\frac{1}{c}|u_0|^{2}\phi\in L^{2}_+$ and passing into the Fourier space,
we have:
\begin{equation*}
\hat{\phi}(\xi)=\frac{1}{\xi-\frac{k}{c}}\hat{\chi}(\xi) \, \pmb{1}_{[0, \infty)}(\xi).
\end{equation*}

\noi
This implies that $\phi\in L^{2}_+$, contradicting $|\phi(x)|=1$ for all $x\in\R$.

Let us now prove that $\frac{k}{c}=0$.
Let $h\in L^2_+$ regular.
Then $\phi h\in \text{Ker}\, H_{u_0}$ and by equation \eqref{eqn:phi 2} we have
\begin{equation*}
A_{u_0}(\phi h)=(D-\tfrac{1}{c}|u_0|^{2})(\phi h)=\phi(D-\tfrac{1}{c}|u_0|^{2})(h)+hD\phi=\phi(D+\tfrac{k}{c})h.
\end{equation*}

\noi
Denoting by $\mu_{\phi h}(A_{u_0})$ the spectral measure corresponding to $\phi h$,
we have
\begin{align*}
\int f d\mu_{\phi h}
& =(\phi h, f(A_{u_0}) \phi h)=(\phi h, \phi f(D+\tfrac{k}{c}) h)=(h,f(D+\tfrac{k}{c}) h)\\
& =\frac{1}{2\pi}\int_{0}^{\infty} f(\xi+\tfrac{k}{c})|\hat{h}(\xi)|^{2} d\xi
=\frac{1}{2\pi}\int_{\frac{k}{c}}^{\infty} f(\eta)|\hat{h}(\eta-\tfrac{k}{c})|^{2} d\eta.
\end{align*}

\noi
Consequently, $\supp \mu_{\phi h}(A_{u_0})\subset [\frac{k}{c},+\infty)$. By Proposition \ref{prop:H_ac subset Ker H_u}, we have $\mathcal{H}_{\text{ac}}(A_{u_0})\subset\text{Ker} \, H_{u_0}$, and therefore
\begin{equation*}
\sigma_{\text{ac}}(A_{u_0})
=\overline{\bigcup_{\psi\in \mathcal{H}_{\text{ac}}(A_{u_0})}\supp\mu_{\psi}}
\subset\overline{\bigcup_{\phi h\in \text{Ker} H_{u_0}}\supp\mu_{\phi h}}\subset \big[\tfrac{k}{c},\infty\big).
\end{equation*}

\noi
Since, by Corollary \ref{cor sigma ac}, $\sigma_{\textup{ac}}(A_{u_{0}})=[0,\infty)$, this yields $k=0$.
\end{proof}

\begin{proposition}
All traveling waves are rational fractions.
\end{proposition}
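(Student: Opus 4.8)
The plan is to use the equation \eqref{eqn:phi} satisfied by the inner function $\phi$, together with its canonical factorization \eqref{eqn: factorization}, to show that $\phi$ must be a \emph{finite} Blaschke product; rationality of the traveling wave $u$ then follows immediately from the Kronecker-type Theorem \ref{th: Kronecker}. We may assume $u$ nontrivial, and by Lemma \ref{lemma: stationary waves} its velocity satisfies $c\neq0$. Writing $\textup{Ker}\,H_{u_0}=\phi L^2_+$ as in Proposition \ref{lemma equation phi}, the relation $cD\phi=|u_0|^2\phi$ becomes, with $D=-i\dx$,
\begin{equation*}
\frac{\phi'(x)}{\phi(x)}=\frac{i}{c}\,|u_0(x)|^2,\qquad x\in\R .
\end{equation*}

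First I would take the logarithmic derivative of the factorization \eqref{eqn: factorization} and restrict it to $\R$. The Blaschke part contributes $\sum_j\big(\tfrac{1}{x-\ld_j}-\tfrac{1}{x-\cj{\ld_j}}\big)=\sum_j\tfrac{2i\,\textup{Im}\,\ld_j}{|x-\ld_j|^2}$, the factor $e^{iaz}$ contributes $ia$, and the singular factor contributes $i\int_\R\tfrac{t^2+1}{(t-x)^2}\,d\nu(t)$; each summand is $i$ times a nonnegative real quantity. Equating with $\tfrac{i}{c}|u_0|^2$ and cancelling the factor $i$ yields
\begin{equation*}
\frac{|u_0(x)|^2}{c}=\sum_j\frac{2\,\textup{Im}\,\ld_j}{|x-\ld_j|^2}+a+\int_\R\frac{t^2+1}{(t-x)^2}\,d\nu(t),
\end{equation*}
an identity between nonnegative quantities. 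Since $u_0\in L^2(\R)$ we have $|u_0|^2\in L^1(\R)$, so I would integrate both sides over $x\in\R$ and apply Tonelli's theorem to the right-hand side. The term attached to each Blaschke factor integrates to exactly $2\pi$, so finiteness forces finitely many factors (and, comparing the total with $\int_\R|u_0|^2=Q$, one reads off $c>0$ and that the number of factors is $\tfrac{Q}{2\pi c}$); the constant term forces $a=0$, since $\int_\R a\,dx=\infty$ otherwise; and since $\int_\R(t-x)^{-2}\,dx=+\infty$ for every $t$, Tonelli forces $\nu=0$. Hence $\phi$ is a finite Blaschke product, say of degree $N$.

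To finish, I would invoke that for a finite Blaschke product $\phi$ of degree $N$ the model space $L^2_+\ominus\phi L^2_+$ has complex dimension $N$. By the symmetry \eqref{sym H_u}, $\textup{Ker}\,H_u=(\textup{Ran}\,H_u)^\perp$, so $\overline{\textup{Ran}\,H_u}=(\phi L^2_+)^\perp$ is this $N$-dimensional space; being finite-dimensional it is closed, whence $\textup{rk}\,H_u=N<\infty$, and Theorem \ref{th: Kronecker} gives $u\in\M(N)$, a rational fraction. I expect the main obstacle to be the second step: justifying that the logarithmic derivative of \eqref{eqn: factorization} may be computed termwise on the boundary $\R$ and, above all, identifying the density $\int_\R\tfrac{t^2+1}{(t-x)^2}d\nu(t)$ produced by the singular inner factor — it is precisely the divergence of its integral over $\R$ that eliminates the singular part (and similarly the exponential factor $e^{iaz}$), collapsing $\phi$ to a finite Blaschke product.
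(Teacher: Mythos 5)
Your overall strategy is the same as the paper's: combine the equation \eqref{eqn:phi} with the canonical factorization \eqref{eqn: factorization}, integrate over $\R$ to force finiteness, and finish with Theorem \ref{th: Kronecker}. Your two closing steps are correct and essentially identical to the paper's (each Blaschke factor contributes exactly $2\pi$ to $\frac{1}{i}\int_\R \phi'/\phi$; a finite Blaschke product of degree $N$ gives $\mathrm{rk}\, H_u=N$, hence $u\in\M(N)$). The genuine gap is precisely the step you flag and then do not resolve: restricting the logarithmic derivative of \eqref{eqn: factorization} to $\R$ \emph{termwise} is not a technicality but the actual content of the proposition. The factorization holds in $\C_+$, and its individual factors need not have continuous, let alone differentiable, boundary values. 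If, say, $\nu$ had an atom at $t_0$, the singular factor would be $e^{i(1+t_0z)/(t_0-z)}$, which has no limit at $t_0$ from $\C_+$; its formal boundary logarithmic derivative $i(1+t_0^2)/(t_0-x)^2$ is infinite at $x=t_0$, so the identity you integrate does not even make sense there. The same problem arises if the zeros $\lambda_j$ accumulate on $\R$. Smoothness of the product $\phi$ on $\R$ (which does follow from \eqref{eqn:phi}) does not allow you to differentiate the factors separately, and nothing in your argument excludes these bad cases \emph{before} the identity is used — this is where the circularity lies.

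The gap also cannot be closed by the obvious regularization (write the identity at height $\eps>0$, where it is valid, integrate in $x$, let $\eps\to 0$): for every $t$ and every $\eps>0$ one has
\begin{equation*}
\int_\R \mathrm{Re}\,\frac{1+t^2}{(t-x-i\eps)^2}\,dx = 0,
\end{equation*}
so the singular factor contributes $0$ to the integrated identity at every positive height, whereas your Tonelli argument needs it to contribute $+\infty$ on the boundary; no monotone or dominated convergence theorem bridges this jump. What is required is exactly what the paper establishes first: from \eqref{eqn:phi}, $\phi$ is uniformly continuous on $\R$, and the Poisson representation \eqref{eqn:Poisson phi} gives $\phi(x+i\eps)\to\phi(x)$ uniformly; since every factor has modulus $\le 1$ in $\C_+$ while $|\phi|=1$ on $\R$, the modulus of each factor must tend to $1$ at the boundary, which forces $\nu\equiv 0$ and pushes the zeros outside a strip $\{0\le\mathrm{Im}\,z\le\eps_0\}$. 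Only then do the surviving factors $B$ and $e^{iaz}$ extend analytically across $\R$, making the termwise computation on $\R$ legitimate; at that point your integration argument goes through (the paper kills $a$ via $\lim_{x\to\infty}\phi(x)$ rather than via $\int_\R a\,dx=\infty$, but both work). So the skeleton is right, but as written the proposal assumes the key point rather than proving it.
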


\begin{proof}
We first prove that $\phi$ is a Blaschke product.

Since $\phi$ is an inner function in the sense of Beurling-Lax, it has the following canonical decomposition:
\begin{equation}\label{eqn: phi intermediate}
\phi(z)=\ld B(z) e^{iaz} e^{i\int_{\R} \frac{1+tz}{t-z}d\nu(t)},
\end{equation}

\noi
where $z\in\C_+$, $\ld$ is a complex number of absolute value 1, $a\geq 0$,
$B$ is a Blaschke product having exactly the same zeroes as $\phi$,
and $\nu$ is a positive singular measure with respect to the Lebesgue measure.

Because $\phi$ satisfies the equation \eqref{eqn:phi} and $u_0\in L^{\infty}(\R)$,
we obtain that $\phi$ has bounded derivative on $\R$ and hence it is uniformly continuous on $\R$. Then,
since $\phi$ satisfies the Poisson formula \eqref{eqn:Poisson phi},
it follows that
\begin{equation*}
\phi(x+i\eps)\to\phi(x), \text{ as }\eps\to 0,
\end{equation*}

\noi
uniformly for $x\in\R$.

$\phi$ being uniformly continuous on $\R$ and $|\phi(x)|=1$, $\forall x\in\R$, we deduce that the zeroes of $\phi$ and hence,
those of the Blaschcke product $B$ as well, lie outside a strip $\{z\in\C; 0\leq \textup{Im} z\leq \eps_0\}$, for some $\eps_0>0$.
Therefore, we have
\begin{equation*}
\frac{\phi(x+i\eps)}{B(x+i\eps)}\to \frac{\phi(x)}{B(x)}, \text{ as } \eps\to 0
\end{equation*}

\noi
uniformly for $x$ in compact subsets of $\R$. Taking the logarithm of the absolute value
and noticing that $\big{|}\frac{\phi(x)}{B(x)}\big{|}=1$, we obtain
\begin{equation*}
\int_{\R}\frac{\eps}{(x-t)^2+\eps^2}d\nu(t)\to 0,
\end{equation*}

\noi
uniformly for $x$ in compact subsets in $\R$.
In particular, for all $\delta>0$ there exists $0<\eps_1\leq \eps_0$ such that for all $0<\eps\leq \eps_1$ and for all $x\in [0,1]$, we have
\begin{equation*}
\frac{1}{2\eps}\nu([x-\eps,x+\eps])\leq\int_{x-\eps}^{x+\eps}\frac{\eps}{(x-t)^2+\eps^2}d\nu(t)\leq\int_{\R}\frac{\eps}{(x-t)^2+\eps^2}d\nu(t)\leq \delta.
\end{equation*}

\noi
Taking $\eps=\frac{1}{2N}\leq \eps_1$ with $N\in\N^{\ast}$, we obtain
$$\nu([0,1])=\nu\big(\bigcup_{k=0}^{N-1}[\frac{k}{N},\frac{k+1}{N}]\big)\leq N\delta\frac{1}{N}=\delta.$$

\noi
In conclusion $\nu([0,1])=0$, and one can prove similarly that the measure $\nu$ of any compact interval in $\R$ is zero. Hence $\nu\equiv 0$.

Consequently, $\phi(x)=\ld B(x) e^{iax}$ for all $x\in\R$.
On the other hand, because $\phi$ satisfies the equation \eqref{eqn:phi},
we have  $\phi(x)=\phi(0)e^{\frac{i}{c}\int_0^x|u_0|^{2}}$ and, in particular, $\lim_{x\to\infty}\phi(x)=\phi(0)e^{\frac{i}{c}\int_0^{\infty}|u_0|^{2}}$.
Since $\lim_{x\to\infty}B(x)=1$, we conclude that $a=0$. Substituting $\phi=\ld B$ in the equation \eqref{eqn:phi},
 we obtain
$$\frac{c}{i}\frac{B'}{B}=|u_0|^{2}.$$
Then $\frac{1}{i}\int_{-\infty}^{\infty}\frac{B'(x)}{B(x)}\,dx<\infty$.
Computing this integral, we obtain that
\[\frac{1}{i}\int_{-\infty}^{\infty}\frac{B'(x)}{B(x)}\,dx=2\sum_{j}\int_{-\infty}^{\infty}\frac{\text{Im} \ld_j}{|x-\ld_j|^2}\,dx=2\sum_{j}\pi\]

\noi
and thus it is finite if and only if $B$ is a finite Blaschke product,
$B(x)=\prod_{j=1}^{N} \eps_{j}\frac{x-\ld_{j}}{x-\overline{\ld}_{j}}$.

Let us prove that the traveling wave $u$ is a rational fraction.
\[\text{Ker}\,H_{u}=\phi L^2_+=B L^2_+.\]

\noi
Notice that
$B L^2_+=\Big{(}\text{span}_{\C}\Big\{\frac{1}{x-\overline{\ld}_{j}}\Big\}_{j=1}^N\Big{)}^{\perp}.$
Indeed, $f\in \Big{(}\text{span}_{\C}\Big\{\frac{1}{x-\overline{\ld}_{j}}\Big\}_{j=1}^N\Big{)}^{\perp}$
if and only if
\[f(\ld_{j})=\frac{1}{2\pi}\int_{\R}e^{i\xi\ld_j}\ft{f}(\xi)d\xi=\frac{1}{2\pi}\Big(\hat{f},e^{-i\overline{\ld}_{j}\xi}\Big)
=\Big(f,\frac{1}{x-\overline{\ld}_{j}}\Big)=0,\]

\noi
if and only if there exists $h\in L^2_+$ such that $f=Bh$.
Hence \[\text{Ker}\,H_{u}=\bigg{(}\text{span}_{\C}\bigg\{\frac{1}{x-\overline{\ld}_{j}}\bigg\}_{j=1}^N\bigg{)}^{\perp}\]
This yields $\overline{\text{Ran}\, H}_{u}=\text{span}_{\C}\Big\{\frac{1}{x-\overline{\ld}_{j}}\Big\}_{j=1}^N$. By Remark \ref{remark} it follows that $u$ is a rational fraction. More precisely, $u\in\text{Ran}\,H_u =\text{span}_{\C}\Big\{\frac{1}{x-\overline{\ld}_{j}}\Big\}_{j=1}^N$.
\end{proof}

\begin{proposition}\label{prop: Hu^2=ld u}
If $u$ is a traveling wave, then there exists $\ld>0$ such that $H_{u}^{2}u=\ld u$.
\end{proposition}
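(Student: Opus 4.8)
The plan is to realize $u$ as an eigenvector of the self-adjoint operator $A_u$ and then transport this to $H_u^2$ through the commutation relation of Lemma \ref{Ran Hu}. First I would rewrite the traveling-wave equation: since $T_{|u|^2}u=\Pi(|u|^2u)$, equation \eqref{eqn:u} reads $cDu+\omega u=T_{|u|^2}u$, so by the definition \eqref{eqn:A} of $A_u$,
\[
A_u u=\Big(D-\tfrac1c T_{|u|^2}\Big)u=Du-\tfrac1c\big(cDu+\omega u\big)=-\tfrac{\omega}{c}\,u .
\]
Thus $u$ lies in the eigenspace $\mathcal{E}:=\operatorname{Ker}\big(A_u+\tfrac{\omega}{c}\big)$ of the self-adjoint operator $A_u$ (self-adjoint because $D$ is and $T_{|u|^2}$ is, $|u|^2$ being real and bounded by Lemma \ref{lemma: smothness}). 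Since $A_u$ commutes with $H_u^2$ by \eqref{eqn:comutativity A and H_u^2} and $\omega/c\in\R$, we get $A_u(H_u^2u)=H_u^2(A_u u)=-\tfrac{\omega}{c}H_u^2u$, so $H_u^2u\in\mathcal{E}$ as well. Moreover $H_u^2$ preserves the finite-dimensional space $\operatorname{Ran}H_u$ (Lemma \ref{lemma: Ker Hu^2}, where $\operatorname{Ran}H_u^2=\operatorname{Ran}H_u$), and $u\in\operatorname{Ran}H_u$ by Remark \ref{remark}. Hence the entire discussion takes place inside $V:=\operatorname{Ran}H_u$, and the proposition reduces to the single assertion that $E:=\mathcal{E}\cap V$ is one-dimensional, i.e. $E=\C u$.

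The remaining and principal difficulty is precisely this one-dimensionality. I would attack it by combining an abstract and a concrete ingredient. On the abstract side, rewriting \eqref{identity for traveling waves} as the anticommutation relation $A_uH_u+H_uA_u=-\tfrac1c(\omega+H_u^2)H_u$ shows that, restricted to a fixed $H_u^2$-eigenspace of eigenvalue $\rho^2$, the antilinear map $H_u$ carries an $A_u$-eigenvector of eigenvalue $\nu$ to one of eigenvalue $-\tfrac1c(\omega+\rho^2)-\nu$; applied to the component of $u$ in that eigenspace this pairs $-\omega/c$ with $-\rho^2/c$. On the concrete side, since $u$ is rational with $\operatorname{Ran}H_u=\operatorname{span}_{\C}\{(x-\overline{\lambda_j})^{-1}\}_{j=1}^N$, I would translate the eigenvalue equation $A_uw=-\tfrac{\omega}{c}w$ for $w\in V$ into the Fourier-side equation
\[
\Big(\xi+\tfrac{\omega}{c}\Big)\hat w(\xi)=\frac{1}{2\pi c}\int_0^{\infty}\widehat{|u|^2}(\xi-\eta)\,\hat w(\eta)\,d\eta,\qquad \xi>0,
\]
governed by the explicit exponential kernel $\widehat{|u|^2}$. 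Because $\operatorname{Ran}H_u\perp\mathcal{H}_{\mathrm{ac}}(A_u)$ (Proposition \ref{prop:H_ac subset Ker H_u}) while $\sigma_{\mathrm{ac}}(A_u)=[0,\infty)$ (Corollary \ref{cor sigma ac}), the eigenvalue $-\omega/c$ is a genuine bound state, and the goal is to show, exploiting that $\hat u$ is already one solution, that the space of $L^2_+$-solutions of this equation is spanned by $\hat u$. This is the step I expect to be the crux; a clean route is a Wronskian-type identity forcing any two decaying solutions to be proportional, the nonlocal projection $\Pi$ being the source of the technical work.

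Once $E=\C u$ is established, the inclusion $H_u^2u\in E$ gives $H_u^2u=\lambda u$ for some $\lambda\in\C$. Then $\lambda$ is real because $H_u^2$ is self-adjoint, and in fact $\lambda\|u\|_{L^2}^2=(H_u^2u,u)=\|H_uu\|_{L^2}^2\geq 0$ by \eqref{sym H_u}. Finally $\lambda>0$: since $u\in\operatorname{Ran}H_u$ with $u\neq 0$ and $\operatorname{Ker}H_u=(\operatorname{Ran}H_u)^{\perp}$, we cannot have $u\in\operatorname{Ker}H_u=\operatorname{Ker}H_u^2$ (Lemma \ref{lemma: Ker Hu^2}), whence $H_uu\neq 0$ and $\lambda=\|H_uu\|_{L^2}^2/\|u\|_{L^2}^2>0$. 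This completes the argument modulo the one-dimensionality of $E$, which is where essentially all the real content lies.
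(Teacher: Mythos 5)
Your reduction is correct and coincides with the paper's: $A_uu=-\frac{\omega}{c}u$ follows from \eqref{eqn:u}, the commutation relation \eqref{eqn:comutativity A and H_u^2} places $H_u^2u$ in the same $A_u$-eigenspace, $H_u^2$ preserves the finite-dimensional space $\textup{Ran}\,H_u$, and your positivity argument for $\lambda$ (via $\textup{Ker}\,H_u=(\textup{Ran}\,H_u)^{\perp}$ and Lemma \ref{lemma: Ker Hu^2}) is sound. But the step you defer --- that $\textup{Ker}(A_u+\frac{\omega}{c})\cap\textup{Ran}\,H_u$ is one-dimensional --- is, as you yourself note, where essentially all the content lies, and neither of your proposed routes is carried out or obviously viable. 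The anticommutation bookkeeping only pairs eigenvalues across $H_u$ and says nothing about multiplicities; the Fourier-side eigenvalue equation is a \emph{nonlocal} integral equation (because of $\Pi$), so there is no ordinary Wronskian theory for it, and knowing that $\hat u$ is one solution gives no leverage toward uniqueness by itself. So the proposal has a genuine gap at its crux.

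The paper closes this gap with the ``infinitesimal shift'' trick. Given $\psi_1,\psi_2\in\textup{Ker}(A_u-a)\cap\textup{Ran}\,H_u$, both are rational fractions by Theorem \ref{th: Kronecker}, so some nontrivial combination $\psi=\alpha\psi_1+\beta\psi_2$ decays like $O(1/x^2)$ at infinity; this guarantees $x\psi\in\textup{Ran}\,H_u\subset L^2(\R)$. Using the identity $\Pi(xf)=x\Pi(f)+\frac{1}{2\pi i}\hat f(0)$ for $f\in L^1(\R)$, one computes
\begin{equation*}
A_u(x\psi)=ax\psi+\tfrac{1}{i}\psi-\tfrac{1}{2c\pi i}\int_{\R}|u|^{2}\psi\, dx,
\end{equation*}
and since every other term lies in $L^2(\R)$ (here Lemma \ref{Ran Hu} is used to ensure $A_u(x\psi)\in\textup{Ran}\,H_u$), the constant must vanish, leaving $(A_u-a)(x\psi)=\frac{1}{i}\psi$. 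Applying $A_u-a$ once more gives $(A_u-a)^2(x\psi)=0$, and self-adjointness of $A_u$ forces $\|(A_u-a)(x\psi)\|_{L^2}^2=((A_u-a)^2(x\psi),x\psi)=0$, hence $\psi=0$, i.e.\ $\psi_1$ and $\psi_2$ are linearly dependent. This rigidity argument (a self-adjoint operator admits no Jordan block of size two) is exactly the ingredient your proof is missing; without it, or a worked-out substitute, the proposal is incomplete.
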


\begin{proof}
According to Remark \ref{remark}, since $u$ is a rational fraction, we have $u\in\text{Ran }H_u$.

Secondly, $u$ satisfies the equation of the traveling waves \eqref{eqn:u}, which is equivalent to $A_u(u)=-\frac{\omega}{c} u$. Therefore, $u$ is an eigenfunction of the operator $A_u$ for the eigenvalue $-\frac{\omega}{c}$.
Applying the identity \eqref{identity for traveling waves},
\begin{equation*}
A_uH_{u}+H_{u}A_u+\frac{\omega}{c} H_{u}+\frac{1}{c}H_{u}^{3}=0,
\end{equation*}

\noi
to $u$ and then to $H_u u$, one deduces that $A_uH_{u}^{2}u=-\frac{\omega}{c} H_{u}^{2}u$.
Therefore, the conclusion of the proposition follows
once we prove all the eigenfunctions of the operator $A_u$ belonging to $\text{Ran}\,H_u$, corresponding to the same eigenvalue, are linearly dependent.

Let $a$ be en eigenvalue of the operator $A_u$ and let $\psi_{1},\psi_{2}\in \text{Ker}\,(A_u-a)\cap \text{Ran} \, H_{u}$.
Since $u$ is a rational fraction, by the Kronecker type theorem \ref{th: Kronecker}, $\psi_1$ and $\psi_2$ are also nonconstant rational fractions. Then, one can find $\alpha,\beta\in\C$, $(\alpha,\beta)\neq (0,0)$, such that $\psi:=\alpha\psi_{1}+\beta\psi_{2}=O(\frac{1}{x^{2}})$ as $x\to\infty$.
Moreover, we have $\psi\in L^1(\R)$, $x\psi\in L^{2}(\R)$, and thus we can compute $A_u(x\psi)$.

Passing into the Fourier space we have,
\begin{equation*}
\ft{\Pi(xf)}(\xi)=i(\partial_{\xi}\hat{f})\pmb{1}_{\xi\geq 0}=i\partial_{\xi}(\hat{f}\pmb{1}_{\xi\geq 0})-i\hat{f}(\xi)\delta_{\xi=0}=\ft{x\Pi f}(\xi)-i\hat{f}(0)\delta_{\xi=0},
\end{equation*}

\noi
for all $f\in L^1(\R)$. Thus, we obtain $\Pi(xf)=x\Pi(f)+\frac{1}{2\pi i}\hat{f}(0)$ for all $f\in L^{1}(\R)$.
We then have
\begin{equation*}
A_u(x\psi)=xA_u(\psi)+\frac{1}{i}\psi-\frac{1}{2c\pi i}\int_{\R}|u|^{2}\psi dx
\end{equation*}
and therefore, since $A_u\psi=a\psi$,
\begin{equation}\label{eqn: x psi}
A_u(x\psi)=ax\psi+\frac{1}{i}\psi-\frac{1}{2c\pi i}\int_{\R}|u|^{2}\psi dx.
\end{equation}

\noi
Since $x\psi\in \text{Ran}\, H_{u}$ and $A_u(\text{Ran}\,H_u)\subset\text{Ran}\,H_u$ by Lemma \ref{Ran Hu},
we have $A_u(x\psi)\in \text{Ran}\, H_{u}\subset L^{2}(\R)$.
The constant in equation \eqref{eqn: x psi} is zero because all the other terms are in $L^2(\R)$.
Then we have
\begin{equation}\label{eqn: x psi 2}
(A_u-a)(x\psi)=\frac{1}{i}\psi.
\end{equation}

\noi
Applying  the self-adjoint operator $A_u-a$ to both sides of the equation \eqref{eqn: x psi 2}, we obtain $(A_u-a)^2(x\psi)=0$ and
\begin{equation*}
\|(A_u-a)(x\psi)\|_{L^2}^2=((A_u-a)(x\psi),(A_u-a)(x\psi))=((A_u-a)^2(x\psi),x\psi)=0.
\end{equation*}

\noi
Thus, $(A_u-a)(x\psi)=0$. In conclusion, by equation \eqref{eqn: x psi 2}, $\psi=0$ and therefore all the eigenfunctions belonging to $\text{Ran}\,H_u$, corresponding to the same eigenvalue $a$, are linearly dependent.
\end{proof}

\begin{proof}[Proof of Theorem \ref{main th}]
Since $u\in\text{Ran} \, H_{u}$,
there exists a unique function $g\in \text{Ran}\, H_{u}$ such that $u=H_{u}(g)$.
By Lemma \ref{prop: Hu^2=ld u}, it results that $H_{u}(u)=\lambda g$.
Applying the identity \eqref{identity for traveling waves},
\begin{equation*}
A_uH_{u}+H_{u}A_u+\frac{\omega}{c} H_{u}+\frac{1}{c}H_{u}^{3}=0,
\end{equation*}

\noi
to $g$ and using $A_uu=-\frac{\omega}{c}u$, one obtains $H_{u}(A_ug+\frac{\ld}{c} g)=0$.
Since $A_u(\text{Ran}\, H_{u})\subset \text{Ran}\, H_{u}$, we have $A_ug+\frac{\ld}{c} g\in \text{Ran}\, H_{u}\cap\text{Ker}\,H_u$. Therefore, $A_ug+\frac{\ld}{c} g=0$, which is equivalent to
\begin{equation*}
cDg-T_{|u|^{2}}g+\ld g=0.
\end{equation*}

\noi
In the following we intend to find a simpler version of the above equation, in order to determine the function $g$ explicitely.
Note that $\bar{u}(1-g)\in L^2_+$, since it is orthogonal to each complex conjugate of a holomorphic function $f\in L^2_+$:
\begin{equation*}
(\bar{u}(1-g),\bar{f})=(f(1-g),u)=(f,u)-(f,H_{u}(g))=0.
\end{equation*}

\noi
Thus, $T_{|u|^{2}}(g)=\Pi(|u|^{2})-\Pi(|u|^{2}(1-g))=H_{u}(u)-|u|^{2}(1-g)=\ld g-|u|^{2}(1-g).$

Passing into the Fourier space and using the fact that $|u|^{2}$ is a real valued function, one can write
$$|u|^{2}=\int_0^{\infty}e^{ix\xi}\ft{|u|^2}(\xi)d\xi+\int_0^{\infty}e^{-ix\xi}\overline{\ft{|u|^2}}(\xi)d\xi=\Pi(|u|^{2})+\overline{\Pi(|u|^{2})}.$$
\noi
Therefore $|u|^{2}=H_{u}(u)+\cj{H_{u}(u)}=\ld(g+\cj{g})$.
Consequently, $T_{|u|^{2}}(g)=\ld (-\bar{g}+g^{2}+|g|^{2})$ and $g$ solves the equation
\begin{equation} \label{eqn:g}
cDg-\ld g^{2}+\ld(g+\bar{g}-|g|^{2})=0.
\end{equation}

We prove that $g+\bar{g}-|g|^{2}=0$.
First, note that $\bar{u}(1-g)\in L^2_+$ , also yields $(1-g)f\in \text{Ker} \, H_{u}$, for all $f\in L^2_+$.
Secondly, let us prove that $g+\bar{g}-|g|^{2}$ is orthogonal to the complex conjugate of all $f\in L^2_+$:
\[(g+\bar{g}-|g|^{2},\bar{f})=(g,\bar{f})-(f(1-g),g)=-(f(1-g),\tfrac{1}{\ld}H_{u}(u))=-\tfrac{1}{\ld}(u,H_{u}(f(1-g)))=0.\]
In addition, since $g+\bar{g}-|g|^{2}$ is a real valued function, we have
\[(g+\bar{g}-|g|^{2},f)=(g+\bar{g}-|g|^{2},\bar{f})=0\]
for all $f\in L^2_+$. Therefore, $g+\bar{g}-|g|^{2}$ is orthogonal to all the functions in $L^2(\R)$ and thus $g+\bar{g}-|g|^{2}=0$.
This is equivalent to $|1-g|=1$ on $\R$. Moreover, equation \eqref{eqn:g} gives the precise formula for $g$,
\begin{equation*}
g(z)=\frac{r}{z-p},
\end{equation*}
where $r,p\in\C$ and $\text{Im}(p)<0.$
Thus $1-g(x)=\frac{x-\bar{p}}{x-p}$ for all $x\in\R$ and
\begin{equation*}
\text{Ker} \, H_{\frac{1}{z-p}}=\frac{z-\bar{p}}{z-p}L^2_+=(1-g)L^2_+\subset \text{Ker}\, H_{u}.
\end{equation*}

\noi
Consequently, $u\in\text{Ran}\, H_{u}\subset \text{Ran} \, H_{\frac{1}{z-p}}=\frac{\C}{z-p}$.
\end{proof}

\section{Orbital stability of traveling waves}

In order to prove the orbital stability of traveling waves, we first use the fact that they are minimizers of the Gagliardo-Nirenberg inequality. We begin this section by proving this inequality, more precisely proposition \ref{prop:GN}.

\begin{proof}[Proof of Proposition \ref{prop:GN},  Gagliardo-Nirenberg inequality]
The proof is similar to the proof of  Gagliardo-Nirenberg inequality for the circle, in \cite{PGSGX}. The idea is to write all the norms in the Fourier space, using Plancherel's identity.
\begin{equation*}
E=\|u\|_{L^{4}}^{4}=\|u^{2}\|_{L^{2}}^2=\frac{1}{2\pi}\big\|\ft{u^{2}}\big\|_{L^{2}}^2=\frac{1}{2\pi}\int _{\R}|\ft{u^2}(\xi)|^2d\xi.
\end{equation*}

\noi
Using the fact that $u\in L^2_+$ and Cauchy-Schwarz inequality, we have:
\begin{align*}|\ft{u^2}(\xi)|^2&=\frac{1}{4\pi^2}\big{|}\int_0^{\xi}\ft{u}(\eta)\ft{u}(\xi-\eta)d\eta\big{|}^2\leq \frac{1}{4\pi^2}\xi\int_0^{\xi}|\ft{u}(\eta)|^2|\ft{u}(\xi-\eta)|^2d\eta\\
&\leq\frac{1}{4\pi^2}\Big{(}\int_0^{\xi}\eta|\ft{u}(\eta)|^2|\ft{u}(\xi-\eta)|^2d\eta+\int_0^{\xi}(\xi-\eta)|\ft{u}(\eta)|^2|\ft{u}(\xi-\eta)|^2d\eta\Big{)}.
\end{align*}

\noi
By change of variables $\xi-\eta\mapsto \eta$ in the second integral, we have
$$|\ft{u^2}(\xi)|^2\leq \frac{1}{2\pi^2}\int_0^{\xi}\eta|\ft{u}(\eta)|^2|\ft{u}(\xi-\eta)|^2d\eta.$$
By Fubini's theorem and  change of variables $\zeta=\xi-\eta$ it results that
\begin{equation*}
E\leq \frac{1}{4\pi^3}\int_{\R}\int_0^{\xi}\eta|\ft{u}(\eta)|^2|\ft{u}(\xi-\eta)|^2d\eta d\xi=\frac{1}{4\pi^3}\int_0^{+\infty}\eta|\ft{u}(\eta)|^2d\eta\int_0^{+\infty}|\ft{u}(\zeta)|^2d\zeta=\frac{1}{\pi} MQ.
\end{equation*}

Moreover, equality holds if and only if we have equality in  Cauchy-Schwarz inequality, i.e.
\begin{equation*}
\ft{u}(\xi)\ft{u}(\eta)=\ft{u}(\xi+\eta)\ft{u}(0),
\end{equation*}

\noi
for all $\xi,\eta\geq 0$. This is true if and only if $\ft{u}(\xi)=e^{-ip\xi}\ft{u}(0)$, for all $\xi\geq 0$. Since $u\in H^{1/2}_+$, this yields $\text{Im}(p)<0$ and $u(x)=\frac{C}{x-p}$, for some constant $C$.
\end{proof}

The second argument we use in proving stability of traveling waves is a profile decomposition theorem.  It states that bounded sequences in $H^{1/2}_+$ can be written as superposition of translations of fixed profiles and of a remainder term. The remainder is small in all the $L^p$-norms, $2<p<\infty$. Moreover, the superposition is almost orthogonal in the $H_+^{1/2}$-norm.

\begin{proposition}[The profile decomposition theorem for bounded sequences in $H^{1/2}_+$]\label{prop: profile dec}
Let $\{v^n\}_{n\in\N}$ be a bounded sequence in $H^{1/2}_+$. Then, there exist a subsequence of $\{v^n\}_{n\in\N}$,
still denoted by $\{v^n\}_{n\in\N}$, a sequence of fixed profiles in $H^{1/2}_+$, $\{V^{(j)}\}_{j\in\N}$, and a family of real sequences $\{x^{(j)}\}_{j\in\N}$ such that for all $\ell\in\N^{\ast}$ we have
\begin{equation*}
v^n=\sum_{j=1}^{\ell}V^{(j)}(x-x_n^{(j)})+r_{n}^{(\ell)},
\end{equation*}

\noi
where
\begin{equation*}
\lim_{\ell\to\infty}\limsup_{n\to\infty}\|r_{n}^{(\ell)}\|_{L^p(\R)}=0
\end{equation*}

\noi
for all $p\in (2,\infty)$, and

\noi
\begin{align*}
\|v^n\|^2_{L^2} & =\sum_{j=1}^{\ell}\|V^{(j)}\|^2_{L^2}+\|r_n^{(\ell)}\|_{L^2}^2+o(1),\ \text{  as } n\to \infty,\\
\|v^n\|^2_{\dot{H}^{1/2}_+} & =\sum_{j=1}^{\ell}\|V^{(j)}\|^2_{\dot{H}^{1/2}_+}+\|r_n^{(\ell)}\|_{\dot{H}^{1/2}_+}^2+o(1),\ \text{  as } n\to \infty, \\
\lim_{n\to \infty}\|v^n\|^4_{L^4} & =\sum_{j=1}^{\infty}\|V^{(j)}\|^4_{L^4}.
\end{align*}
\end{proposition}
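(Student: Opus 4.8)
The plan is to follow the concentration-compactness scheme of Gérard and of Hmidi--Keraani, extracting profiles one at a time as near-maximizers of a translation-invariant defect functional. For a bounded sequence $\{w_n\}$ in $H^{1/2}_+$, set
\[
\eta(\{w_n\}) = \sup\Big\{\|W\|_{\dot{H}^{1/2}_+} : w_n(\cdot+y_n)\rightharpoonup W \text{ in } H^{1/2}_+ \text{ for some } \{y_n\}\subset\R\Big\}.
\]
Since $H^{1/2}_+$ is a closed subspace preserved by translations (which act by $e^{-iy\xi}$ on the Fourier side), every such weak limit $W$ again lies in $H^{1/2}_+$. Starting from $v^n$, I would choose $x_n^{(1)}$ and $V^{(1)}$ with $\|V^{(1)}\|_{\dot{H}^{1/2}_+}\ge\tfrac12\eta(\{v^n\})$ and $v^n(\cdot+x_n^{(1)})\rightharpoonup V^{(1)}$ along a subsequence, and set $r_n^{(1)}=v^n-V^{(1)}(\cdot-x_n^{(1)})$. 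As $r_n^{(1)}(\cdot+x_n^{(1)})\rightharpoonup 0$, expanding the squared norm and discarding the cross term by weak convergence gives the one-step orthogonality $\|v^n\|^2=\|V^{(1)}\|^2+\|r_n^{(1)}\|^2+o(1)$ simultaneously for the $L^2$ and $\dot{H}^{1/2}_+$ inner products (both are translation invariant, and the $\dot{H}^{1/2}_+$ pairing extends to a bounded functional on $H^{1/2}_+$).

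Iterating on $r_n^{(\ell)}$ and extracting a diagonal subsequence produces profiles $V^{(j)}$ and parameters $x_n^{(j)}$. Telescoping the one-step identities gives, for each fixed $\ell$,
\begin{align*}
\|v^n\|_{L^2}^2 &=\sum_{j=1}^{\ell}\|V^{(j)}\|_{L^2}^2+\|r_n^{(\ell)}\|_{L^2}^2+o(1),\\
\|v^n\|_{\dot{H}^{1/2}_+}^2 &=\sum_{j=1}^{\ell}\|V^{(j)}\|_{\dot{H}^{1/2}_+}^2+\|r_n^{(\ell)}\|_{\dot{H}^{1/2}_+}^2+o(1),
\end{align*}
as $n\to\infty$. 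In particular $\sum_j\|V^{(j)}\|_{H^{1/2}_+}^2\le\limsup_n\|v^n\|_{H^{1/2}_+}^2<\infty$, so $\|V^{(j)}\|\to 0$; since by construction $\eta(\{r_n^{(\ell)}\})\le 2\|V^{(\ell+1)}\|_{\dot{H}^{1/2}_+}$, this forces $\eta(\{r_n^{(\ell)}\})\to 0$ as $\ell\to\infty$. The asymptotic orthogonality $|x_n^{(j)}-x_n^{(k)}|\to\infty$ for $j\ne k$ is the usual bookkeeping: if some difference stayed bounded along a subsequence, then $V^{(j)}$ would appear as a fixed translate inside $r_n^{(k)}(\cdot+x_n^{(k)})$, which converges weakly to $0$, forcing $V^{(j)}=0$ and contradicting $\|V^{(j)}\|_{\dot{H}^{1/2}_+}\ge\tfrac12\eta(\{r_n^{(j-1)}\})$.

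The analytic heart --- and the step I expect to be the main obstacle --- is the \emph{vanishing lemma}: for bounded $\{w_n\}$ in $H^{1/2}_+$, the quantity $\limsup_n\|w_n\|_{L^p}$ is controlled by a function of $\eta(\{w_n\})$ tending to $0$, for every $2<p<\infty$. Here I would exploit that the embedding $H^{1/2}_+\hookrightarrow L^p$ is \emph{subcritical} for $p<\infty$, so no scaling parameter is needed. Truncating in frequency at small $\delta_0$ and large $R_0$, the pieces $P_{|\xi|<\delta_0}w_n$ and $P_{|\xi|>R_0}w_n$ are bounded in $L^p$ by $\delta_0^{s_p}\|w_n\|_{L^2}$ and $R_0^{-1/p}\|w_n\|_{\dot{H}^{1/2}_+}$ respectively, where $s_p=\tfrac12-\tfrac1p>0$, hence negligible once $\delta_0$ is small and $R_0$ large. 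On the fixed band $\delta_0\le|\xi|\le R_0$ the functions are band-limited, so by Bernstein it suffices to bound their $L^\infty$ norm; and if $\|P_{[\delta_0,R_0]}w_n\|_{L^\infty}$ did not tend to $0$, choosing near-maximizing points and translating would, via Arzel\`a--Ascoli (band-limited bounded families are equicontinuous and locally compact), produce a nonzero weak limit of $w_n(\cdot+y_n)$, forcing $\eta(\{w_n\})$ to be bounded below. This yields the quantitative estimate; applying it to $w_n=r_n^{(\ell)}$ and letting $\ell\to\infty$ gives $\lim_{\ell}\limsup_n\|r_n^{(\ell)}\|_{L^p}=0$.

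Finally, for the $L^4$ identity I would insert $v^n=\sum_{j=1}^{\ell}V^{(j)}(\cdot-x_n^{(j)})+r_n^{(\ell)}$ into $\|v^n\|_{L^4}^4$. The asymptotic orthogonality of the $x_n^{(j)}$ makes all mixed integrals between distinct profiles vanish as $n\to\infty$ (approximate each $V^{(j)}\in L^4$ by a compactly supported function and use that the supports separate), so $\|\sum_{j\le\ell}V^{(j)}(\cdot-x_n^{(j)})\|_{L^4}^4\to\sum_{j\le\ell}\|V^{(j)}\|_{L^4}^4$. The terms involving $r_n^{(\ell)}$ are controlled by H\"older's inequality together with the $L^4$-smallness just established, so letting first $n\to\infty$ and then $\ell\to\infty$ yields $\lim_n\|v^n\|_{L^4}^4=\sum_{j=1}^{\infty}\|V^{(j)}\|_{L^4}^4$, the series converging by the Gagliardo--Nirenberg inequality \eqref{ineqn:Gagliardo-Nirenberg} and the $H^{1/2}_+$ summability of the profiles.
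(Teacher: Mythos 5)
Your proposal is correct and follows essentially the same route as the paper: the paper does not write out a proof at all, but simply invokes \cite[Proposition 2.1]{Keraani} (the Hmidi--Keraani profile decomposition, itself built on G\'erard's concentration-compactness scheme), noting only that the profiles remain in $H^{1/2}_+$ because they arise as weak limits of translates --- exactly the point you verify at the outset. Your write-up is in effect a self-contained version of that cited argument (extraction via the defect functional $\eta$, Pythagorean expansions from weak convergence, divergence of the cores, and a Bernstein-type vanishing lemma for the remainder), so there is nothing to flag.
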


The proof of this proposition follows exactly the same lines as that of the profile decomposition theorem for bounded sequences in $H^{1}(\R)$, \cite[Proposition 2.1]{Keraani}. However, note that in our case, the profiles $V^{(j)}$ belong to the space $H^{1/2}_+$, (not only to the space $H^{1/2}(\R)$), as they are weak limits of translations of the sequence $\{v^n\}_{n\in\N}$.

\begin{proof}[Proof of Corollary \ref{main cor}]
According to Proposition \ref{prop:GN}, $C(a,r)$ is the set of minimizers of the problem
\begin{equation*}
\inf\{M(u)\big{|} u\in H^{1/2}_+, Q(u)=q(a,r), E(u)=e(a,r)\},
\end{equation*}

\noi
where
\begin{equation*}
q(a,r)=\frac{a^2\pi}{r},\ \ \ \ \ e(a,r)=\frac{a^4\pi}{2 r^3}.
\end{equation*}

\noi
We denote the infimum by $m(a,r)$.

Since
\begin{equation*}
\inf_{\phi\in C(a,r)}\|u_0^n-\phi\|_{H^{1/2}_+}\to 0,
\end{equation*}

\noi
by the Sobolev embedding theorem, we deduce
\begin{equation*}
Q(u_0^n)\to q(a,r),\ \ \ \ \ \ E(u_0^n)\to e(a,r),\ \ \ \ \ \ M(u_0^n)\to m(a,r).
\end{equation*}

\noi
Let $\{t_n\}_{n\in\N}$ be an arbitrary sequence of real numbers.
The conservation laws yield
\begin{equation*}
Q(u^n(t_n))\to q(a,r),\ \ \ E(u^n(t_n))\to e(a,r),\ \ \ M(u^n(t_n))\to m(a,r).
\end{equation*}

\noi
We can choose two sequences of positive numbers $\{a_n\}$ and $\{\ld_n\}$ such that $v^n(x):=a_nu^n(t_n,\ld_n x)$ satisfies $\|v^n\|_{L^{2}(\R)}=1$, $\|v^n\|_{L^{4}(\R)}=1$. Notice that
\[a_n\to a_{\infty},\,\,\,\,\, \ld_n\to \ld_{\infty},\]
where $a_{\infty}>0$, $\ld_{\infty}>0$, and
\[\frac{\ld_{\infty}}{a_{\infty}^4}=e(a,r),\,\,\,\,\,\, \frac{\ld_{\infty}}{a_{\infty}^2}=q(a,r).\]
Then
\begin{equation*}
\|v^n\|^{1/2}_{\dot{H}^{1/2}_+}=\frac{\|v^n\|^{1/2}_{L^{2}}\|v^n\|^{1/2}_{\dot{H}^{1/2}_+}}{\|v^n\|_{L^{4}}}=\frac{\|u^n(t_n)\|^{1/2}_{L^{2}}\|u^n(t_n)\|^{1/2}_{\dot{H}^{1/2}_+}}{\|u^n(t_n)\|_{L^{4}}},
\end{equation*}

\noi
for all $n\in\N$.
In particular, as a consequence of the Gagliardo-Nirenberg inequality,
$$\lim_{n\to \infty}\|v^n\|_{\dot{H}^{1/2}_+}=\sqrt{\pi}
.$$
Thus, the sequence $\{v^n\}_{n\in\N}$ is bounded in $H^{1/2}_+$. Applying the profile decomposition theorem
(Proposition \ref{prop: profile dec}), we obtain that there exist real sequences $\{x^{(j)}\}_{j\in\N}$ depending on the sequence $\{t_n\}_{n\in\N}$ in the definition of $\{v^n\}_{n\in\N}$, such that for all $\ell\in\N^{\ast}$ we have:
\begin{equation*}
v^n=\sum_{j=1}^{\ell}V^{(j)}(x- x_n^{(j)})+r_{n}^{(\ell)},
\end{equation*}

\noi
where
\begin{equation*}
\lim_{\ell\to\infty}\limsup_{n\to\infty}\|r_{n}^{(\ell)}\|_{L^p(\R)}=0
\end{equation*}

\noi
for all $p\in (2,\infty)$, and

\noi
\begin{align*}
\|v^n\|^2_{L^2} & =\sum_{j=1}^{\ell}\|V^{(j)}\|^2_{L^2}+\|r_n^{(\ell)}\|_{L^2}^2+o(1),\ \text{  as } n\to \infty,\\
\|v^n\|^2_{\dot{H}^{1/2}_+} & =\sum_{j=1}^{\ell}\|V^{(j)}\|^2_{\dot{H}^{1/2}_+}+\|r_n^{(\ell)}\|_{\dot{H}^{1/2}_+}^2+o(1),\ \text{  as } n\to \infty, \\
\lim_{n\to \infty}\|v^n\|^4_{L^4} & =\sum_{j=1}^{\infty}\|V^{(j)}\|^4_{L^4}.
\end{align*}

\noi
Consequently,
\begin{equation}\label{eqn:profile 1}
1\geq\sum_{j=1}^{\infty}\|V^{(j)}\|^2_{L^2},\quad \quad
\pi\geq\sum_{j=1}^{\infty}\|V^{(j)}\|^2_{\dot{H}^{1/2}_+},\quad \quad
1=\sum_{j=1}^{\infty}\|V^{(j)}\|^4_{L^4}.
\end{equation}

\noi
Therefore, by the Gagliardo-Nirenberg inequality \eqref{ineqn:Gagliardo-Nirenberg}, we have
\begin{align*}
\pi
\geq (\sum_{j=1}^{\infty}\|V^{(j)}\|^2_{L^2})(\sum_{j=1}^{\infty}\|V^{(j)}\|^2_{\dot{H}^{1/2}_+})
\geq \sum_{j=1}^{\infty}\|V^{(j)}\|^2_{L^2}\|V^{(j)}\|^2_{\dot{H}^{1/2}_+}
\geq \pi\sum_{j=1}^{\infty}\|V^{(j)}\|^4_{L^4}=\pi.
\end{align*}

\noi
Thus, there exist only one profile $V:=V^{(1)}$ and a sequence $x=x^{(1)}$ such that
\begin{align}
v^n=&V(x- x_n)+r_{n},\notag\\
\|v^n\|^2_{L^2}=&\|V\|^2_{L^2}+\|r_n\|_{L^2}^2+o(1),\ \text{  as } n\to \infty,\label{profile 2}\\
\|v^n\|^2_{\dot{H}^{1/2}_+}=&\|V\|^2_{\dot{H}^{1/2}_+}+\|r_n\|_{\dot{H}^{1/2}_+}^2+o(1),\ \text{  as } n\to \infty.\label{profile 3}
\end{align}

\noi
According to \eqref{eqn:profile 1}, $V$ satisfies $1\geq\|V\|_{L^{2}}^2$, $\pi\geq\|V\|^2_{\dot{H}^{1/2}_+}$, and $\|V\|^4_{L^4}=1$.
In conclusion,
\begin{equation*}
\pi=\pi\|V\|_{L^{4}}^4\leq  \|V\|_{L^2}^{2}\|V\|^2_{\dot{H}^{1/2}_+}\leq \pi.
\end{equation*}

\noi
Hence, $V$ is a minimizer in the Gagliardo-Nirenberg inequality.
Moreover, \[\|V\|_{L^{2}}^2=1=\|v^n\|_{L^{2}},\ \ \ \ \ \ \ \ \ \ \|V\|^2_{\dot{H}^{1/2}_+}=\pi=\lim_{n\to \infty}\|v^n\|_{\dot{H}^{1/2}_+}^2,\]

\noi
By \eqref{profile 2} and \eqref{profile 3}, we have $r_n\to 0$ in $H^{1/2}_+$ as $n\to \infty$. Consequently, $v^n(\cdot+x_n)\to V$ in $H^{1/2}_+$, or equivalently,
\begin{equation*}
\lim_{n\to \infty}\|a_nu^n(t_n,\ld_nx)-V(x-x_n)\|_{H^{1/2}_+}=0.
\end{equation*}

\noi
We then have
\begin{equation*}
\lim_{n\to \infty}\|u^n(t_n,x)-\frac{1}{a_{\infty}}V(\frac{x- x_n\ld_{\infty}}{\ld_{\infty}})\|_{H^{1/2}_+}=0.
\end{equation*}

\noi
Notice that, since $V$ is a minimizer in the Gagliardo-Nirenberg inequality,
we have $\tilde\phi(x):=\frac{1}{a_{\infty}}V(\frac{x}{\ld_{\infty}})=\frac{\alpha}{x-p}\in C(a,r)$. Then, since $x_n\lambda_{\infty}\in\R$,
we have $\phi(x)=\tilde\phi(x-x_n\ld_{\infty})=\frac{\alpha}{x-\tilde p}\in C(a,r)$. Thus,
\begin{equation}\label{sup approx}
\inf_{\phi\in C(a,r)}\|u^n(t_n,x)-\phi(x)\|_{H^{1/2}_+}\to 0, \text{ as } n\to\infty.
\end{equation}

\noi
The conclusion follows by approximating the supremum in the statement by the sequence in \eqref{sup approx} with an appropriate $\{t_n\}_{n\in\N}$.
\end{proof}

\noi
{\bf Acknowledgments:}
The author is grateful to her Ph.D. advisor Prof. Patrick G\'erard
for introducing her to this subject and for constantly supporting her during the preparation of this paper.
She would also like to thank the referee for his helpful comments.

\end{document}